\documentclass[reqno,final,a4paper]{amsart}
\usepackage[utf8]{inputenc}
\usepackage[shortlabels]{enumitem}
\usepackage[notref,notcite]{showkeys}
\usepackage{amsmath}
\usepackage{amssymb}
\usepackage{amsthm}
\usepackage{abstract}
\usepackage{xcolor}
\usepackage{comment}
\usepackage{mathtools}
\usepackage{graphicx}
\usepackage{caption}
\usepackage{subcaption}
\usepackage{float}
\usepackage{hyperref}
\usepackage{enumitem}
\usepackage{tikz}
\usepackage{bbm}
\usepackage[normalem]{ulem}

\renewenvironment{abstract}{%
  \noindent\textbf{Abstract.}%
}{}

\makeatletter
\def\namedlabel#1#2{\begingroup
   \def\@currentlabel{#2}%
   \label{#1}\endgroup
}
\makeatother
\usepackage{dsfont}
\hypersetup{
    colorlinks=true,
    linkcolor=blue,
    filecolor=magenta,      
    urlcolor=cyan
    }
\allowdisplaybreaks
\makeatletter
\newcommand*{\barfix}[2][.175ex]{%
  \mathpalette{\@barfix{#1}}{#2}%
}
\newcommand*{\@barfix}[3]{%
  \vbox{%
    \kern#1\relax
    \hbox{$#2#3\m@th$}%
  }%
}
\makeatother

\usepackage[capitalise]{cleveref}
\newtheorem{theorem}{Theorem}
\newtheorem{thm}{Theorem}[section]
\newtheorem{corollary}[thm]{Corollary}
\newtheorem{lemma}[thm]{Lemma}
\newtheorem{proposition}[thm]{Proposition}
\newtheorem{claim}[thm]{Claim}

\theoremstyle{definition}

\newtheorem{definition}[thm]{Definition}
\newtheorem{remark}[thm]{Remark}

\newcommand{\cA}{\mathcal{A}}
\newcommand{\cB}{\mathcal{B}}
\newcommand{\cC}{\mathcal{C}}
\newcommand{\cD}{\mathcal{D}}
\newcommand{\cE}{\mathcal{E}}
\newcommand{\cF}{\mathcal{F}}
\newcommand{\cG}{\mathcal{G}}

\newcommand{\cI}{\mathcal{I}}

\newcommand{\cM}{\mathcal{M}}

\newcommand{\cP}{\mathcal{P}}

\newcommand{\cS}{\mathcal{S}}
\newcommand{\cT}{\mathcal{T}}

\newcommand{\cY}{\mathcal{Y}}

\newcommand{\dtv}{d_{\mathrm{TV}}}
\newcommand{\tmix}{t_{\mathrm{mix}}}
\newcommand{\mult}{\mathrm{mult}}

\newcommand{\whp}{%
    \textbf{whp}
}

\usepackage[margin=1in]{geometry}

\def\e{\mathrm{e}}
\def\Id{\mathrm{Id}}
\def\eps{\varepsilon}

\def\vbad{V_{\mathrm{bad}}}


\newcommand{\1}{\mathbbm{1}}

\title{Diameter and mixing time of the giant component in the percolated hypercube}

\author[Anastos]{Michael Anastos}

\address{Institute of Science and Technology Austria (ISTA), Klosterneurburg 3400, Austria}
\email{michael.anastos@ist.ac.at}

\author[Diskin]{Sahar Diskin}

\address{D-MATH ETH Z\"urich, R\"amistrasse 101, 8092 Z\"urich, Switzerland}
\email{sahardiskinmail@gmail.com}

\author[Lichev]{Lyuben Lichev}

\address{Institute of Statistics and Mathematical Methods in Economics, TU Wien, A-1040 Vienna, Austria}
\email{lyuben.lichev@tuwien.ac.at}

\author[Zhukovskii]{Maksim Zhukovskii}

\address{School of Computer Science, University of Sheffield, UK}
\email{m.zhukovskii@sheffield.ac.uk}

\thanks{Anastos was supported by the Austrian Science Fund (FWF) grant No.~10.55776/ESP3863424. Diskin was supported in part by the BSF Prof. Rahamimoff Travel Grant for Young Scientists (T-2023202). Lichev was supported by the Austrian Science Fund (FWF) grant No.~10.55776/ESP624. For open access purposes, the authors
have applied a CC BY public copyright license to any author-accepted manuscript version arising from this
submission.}

\begin{document}

\maketitle

\begin{abstract}
We consider bond percolation on the $d$-dimensional binary hypercube with $p=c/d$ for fixed $c>1$. 
We prove that the typical diameter of the giant component $L_1$ is of order $\Theta(d)$, and the typical mixing time of the lazy random walk on $L_1$ is of order $\Theta(d^2)$. This resolves long-standing open problems of Bollob\'as, Kohayakawa and \L{}uczak from 1994, and of Benjamini and Mossel from 2003.

A key component in our approach is a new tight large deviation estimate on the number of vertices in $L_1$ whose proof includes several novel ingredients: a structural description of the residue outside the giant component after sprinkling, a tight quantitative estimate on spreadness properties of the giant in the hypercube, and a stability principle which rules out the disintegration of large connected sets under thinning. This toolkit further allows us to obtain optimal bounds on the expansion in~$L_1$.
\end{abstract}

\section{Introduction}
The $d$-dimensional binary hypercube $Q^d$ is the graph with vertex set $\{0,1\}^d$ where an edge connects two vertices if and only if they differ in a single coordinate. The hypercube is thus a $d$-regular bipartite graph on $2^d$ vertices.
For $p=p(d)\in [0,1]$, the $p$-percolated hypercube $Q^d_p$ is obtained by retaining every edge of $Q^d$ independently and with probability $p$.

The study of the percolated hypercube $Q^d_p$ was pioneered by Burtin~\cite{B77} and Sapo\v{z}enko \cite{S67} who showed that $p=1/2$ is a sharp threshold for connectivity of $Q^d_p$. 
A more drastic change of behaviour of typical instances of the model happens around $p=1/d$.
Erd\H{o}s and Spencer~\cite{ES79} observed that when $p=c/d$ with $c<1$, $Q^d_p$ typically contains only components of order $O(d)$, and Ajtai, Koml\'os, and Szemer\'edi~\cite{AKS81} completed the picture by showing that $Q^d_p$ typically contains a giant connected component $L_1$ with linearly many vertices when~$c>1$. 
A subsequent strengthening of these results by Bollob\'as, Kohayakawa and \L{}uczak \cite{BKL92} showed that, when $c>1$, the number of vertices in $L_1$ is typically $y2^d + o(2^d)$ where $y:=y(c)$ is the survival probability of a Galton-Watson process with offspring distribution Poisson($c$), and typically all other (smaller) components have order $O(d)$, mirroring a similar phenomenon in the Erd\H{o}s-R\'enyi random graph $G(n,p)$~\cite{ER60}.

Despite the similarities, many properties of the giant component of the random graph $G(n,c/n)$ with $c>1$ were better understood thanks to the homogeneous nature of the host graph $K_n$.
Two key examples are the typical \textit{diameter} of the giant component and the asymptotic \textit{mixing time of a simple random walk} on the giant component.
The typical diameter of the giant component of $G(n,c/n)$ with $c>1$ was determined very precisely in a line of research~\cite{CL01,FR07,RW10} and turns out to be of asymptotic order $\Theta_c(\log n)$.
The mixing time of a simple random walk, roughly speaking, measures the time needed by a simple random walk to forget its starting point (for a formal definition, see \Cref{s: mixing time}).
The asymptotic mixing time of a simple random walk on the giant component in $G(n,c/n)$ with $c>1$ is of order $\Theta_c((\log n)^2)$~\cite{BKW14,FR08}.

The remarkable similarity between the Erd\H{o}s-R\'enyi random graph and the percolated hypercube with the same (constant) average degree raises the question whether the typical diameter and the typical mixing time of a simple random walk on the giant component $L_1$ in $Q^d_p$ have the same quantitative behaviour.
Already in 1994, Bollob\'as, Kohayakawa, and \L{}uczak \cite[Problem 15]{BKL94d} asked about the order of the typical diameter of $L_1$ and, in particular, if it is polynomial in $d$. 
The latter question was answered in a breakthrough of Erde, Kang, and Krivelevich~\cite{EKK22} who showed -- using expansion properties of the hypercube -- that the typical diameter of $L_1$ is of order $O(d^3)$.
This result was later improved by Diskin, Erde, Kang, and Krivelevich~\cite{DEKK24} to $O(d (\log d)^2)$. 
In each of~\cite{DEKK24,EKK22} it was asked if, similarly to the giant of $G(n,c/n)$, $\Theta_c(d)$ is the right dependency on $d=\log_2|V(Q^d)|$.

In a separate line of research, Benjamini and Mossel~\cite[Section 3]{BM03} asked whether the asymptotic mixing time of a (lazy) simple random walk on $L_1$ is of order $\Theta_c(d^2)$, as in the case of the giant component of $G(n,c/n)$. 
This question was later reiterated by Pete \cite{Pete08}, by van der Hofstad and Nachmias~\cite[Open Problem (6)]{HN17}, and is advertised on the webpage of the Levin--Peres--Wilmer book~\cite{LPW17} ``Markov Chains and Mixing Times''~\cite[Question 3]{Pereswebsite}.
Progress towards this conjecture was made by Erde, Kang and Krivelevich~\cite{EKK22} who provided an upper bound of $O(d^{11})$, and by Diskin, Erde, Kang and Krivelevich~\cite{DEKK24} who showed an upper bound of $O(d^2(\log d)^2)$.

Our first main result resolves these questions and conjectures.
\begin{theorem}\label{th: main}
Fix $c > 1$ and let $p=p(d)=c/d$. Then \textbf{whp}\footnote{With high probability, that is, with probability tending to $1$ as $d\to \infty$.} the giant component $L_1$ in $Q^d_p$ satisfies each of the following properties.
\begin{enumerate}[label=\upshape(\alph*)]
    \item The diameter of $L_1$ is $\Theta_c(d)$.\label{th: diameter}
    \item The mixing time of a lazy simple random walk on $L_1$ is $\Theta_c(d^2)$.\label{th: mixing time}
\end{enumerate}
\end{theorem}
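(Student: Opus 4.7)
The plan is to prove \Cref{th: diameter} and \Cref{th: mixing time} separately, with the upper bounds driven by an optimal isoperimetric estimate on $L_1$ delivered by the toolkit announced in the abstract.

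\textbf{Lower bounds.} For the diameter, whp $|V(L_1)| = (y+o(1))2^d$; since the Hamming ball of radius $\alpha d$ has cardinality $2^{H(\alpha)d + o(d)} \ll 2^d$ for any fixed $\alpha < 1/2$, $L_1$ must contain two vertices at Hamming distance $\Omega(d)$, hence at $L_1$-distance $\Omega(d)$. For the mixing lower bound, I would apply the variational inequality $\lambda \leq \cE(f, f) / \mathrm{Var}_\pi(f)$ to the $1$-Lipschitz function $f(u) = d_{L_1}(v_0, u)$, where $v_0 \in V(L_1)$ is chosen to have eccentricity $\Theta(d)$ (exists by the diameter lower bound). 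The Dirichlet energy is at most $1/4$, while the spread/typical-distance estimates from the toolkit should produce $\mathrm{Var}_\pi(f) = \Omega(d^2)$ by exhibiting two $\pi$-mass-$\Omega(1)$ sets at $L_1$-distances differing by $\Omega(d)$. This yields $\lambda \leq O(1/d^2)$, hence $\tmix = \Omega(d^2)$.

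\textbf{Expansion estimate.} The crucial input for both upper bounds is the conductance estimate $\Phi(L_1) \geq \Omega_c(1/d)$ whp, together with a strengthened vertex-expansion statement at small scales: for $S \subseteq V(L_1)$ with $|S| \leq 2^{\alpha d}$ (some fixed $\alpha \in (0,1)$), $|N_{L_1}(S)| \geq (1 + \Omega_c(1))|S|$. This is the paper's main technical contribution, and the main obstacle in the proof. My approach would be two-round sprinkling: after the first round, the tight large deviation estimate on $|V(L_1)|$ together with the residue-structure lemma controls the distribution of small components of $V(Q^d) \setminus V(L_1)$; the stability-under-thinning principle then rules out the persistence of an atypically small-boundary set through the second round. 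Combining this with Harper's vertex-isoperimetric inequality for $Q^d$ delivers the small-scale vertex-expansion refinement.

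\textbf{Diameter upper bound.} With the two-scale expansion, I would argue as follows. Starting a BFS in $L_1$ from any $u \in V(L_1)$, the constant-rate small-set vertex expansion inflates the ball to size $\geq 2^{\alpha d}$ within $O(d)$ steps. The quantitative spread estimate then guarantees that such a ball occupies positive density in a Hamming ball of radius $O(d)$ around $u$. Doing the same from a second vertex $v$, the corresponding Hamming balls overlap in a constant density of $Q^d$ (being both of radius $O(d)$), and on the overlap the $L_1$-portions must intersect by a simple pigeonhole applied to the spread lower bound; concatenating the BFS paths gives $d_{L_1}(u, v) = O(d)$. The delicate point is controlling the $L_1$-density on the overlap without polylogarithmic overhead; naive iteration of $(1 + \Omega(1/d))$-expansion alone gives only $O(d^2)$, so the quantitative spread estimate is essential at the large scale.

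\textbf{Mixing upper bound.} Cheeger's inequality on $\Phi \geq \Omega_c(1/d)$ yields $\lambda \geq \Omega_c(1/d^2)$; combined with $\log(1/\pi_{\min}) = \Theta(d)$, the standard bound only gives $\tmix = O(d^3)$, off by a factor of $d$ from the target. To shave this factor, I would decompose $L_1$ (as in the classical analysis of the giant of $G(n,p)$) into a well-expanding ``core'' of $\Omega_c(1)$-conductance that contains almost all of $V(L_1)$, and ``dangling'' path-like pieces of depth $O(d)$ attached to it, both delivered by the expansion toolkit. Standard decomposition bounds for the mixing time, or a direct canonical-flow argument routing each pair of vertices through the core via $L_1$-paths of length $O(d)$ with per-edge congestion $O(|V(L_1)|/d)$, then yield $\tmix = O(d^2)$. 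Establishing the decomposition, and in particular ruling out large ``thin'' regions of $L_1$ that would slow down mixing beyond $d^2$, is the main obstacle and reduces to the same isoperimetric and spread lemmas that drive the diameter bound.
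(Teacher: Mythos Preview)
Your proposal has genuine gaps in the upper bounds, and diverges substantially from the paper's actual arguments.

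\textbf{Lower bounds.} Both of yours are weaker than necessary. The paper simply notes (introduction) that $L_1$ \textbf{whp} contains a bare path of length $\Omega_c(d)$, verifiable by a second moment computation. A bare path of length $\ell$ immediately forces diameter $\ge \ell$ and, by the standard bottleneck/hitting-time argument for paths, mixing time $\Omega(\ell^2)$. Your variational approach to the mixing lower bound is not wrong in spirit, but you have not shown $\mathrm{Var}_\pi(f)=\Omega(d^2)$; the ``spread estimates'' in the paper are not of that form, and in any case the bare-path argument is one line.

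\textbf{Diameter upper bound.} This is where your plan breaks. You correctly observe that small-scale constant-rate vertex expansion grows the BFS ball to size $2^{\alpha d}$ in $O(d)$ steps, but then wave at a ``spread estimate'' to get two such balls to intersect. No such statement is available: once the ball has size $2^{\alpha d}$ with $\alpha<1$, the only expansion guarantee the paper (or anyone) proves at that scale is edge-expansion of order $1/d$, and iterating this to reach constant density costs another $\Theta(d)$ doublings, i.e.\ $\Theta(d^2)$ steps --- exactly the barrier you flag but do not overcome. The paper's route is entirely different: it first proves, by a delicate second-moment computation on a restricted class of ``tame'' paths of length $kd$ (paths in which each coordinate flips exactly $k$ times, with additional local constraints to tame the variance), that $\mathbf{0}$ and $\mathbf{1}$ are joined by an $O(d)$-path with probability at least $d^{-O(1)}$ (\Cref{lem:distance_andipodas}). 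It then bootstraps this to arbitrary pairs (\Cref{lem:bootstrap_7.1}), and finally amplifies the inverse-polynomial success probability to \textbf{whp} by exhibiting, for each pair $u,v$, many vertex-disjoint subcubes in which to try independently (\Cref{lem:diameter_for a pair}); the expansion estimates enter only to guarantee that every vertex in $L_1$ reaches enough vertices within polylogarithmic distance to feed this amplification. Nothing like your BFS-intersection picture appears.

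\textbf{Mixing upper bound.} Your core/dangling decomposition is plausible but not what the paper does, and you give no indication of how the decomposition would be produced from the toolkit. The paper instead feeds the conductance \emph{profile} --- the expansion of connected sets at every scale, assembled in \Cref{cor: new expansion} --- into the Fountoulakis--Reed bound $\tmix\le K\sum_j \Phi(2^{-j})^{-2}$. The point is that the sum telescopes: constant-rate expansion at small scales contributes $O(d^2)$ from the first $O(1)$ terms, the $\Omega(\log(n/k)/d)$ expansion at large scales contributes $\sum_{j\lesssim d} (d/j)^2=O(d^2)$, and the remaining terms are negligible. Your Cheeger-plus-decomposition plan would need a separate argument bounding the depth of dangling pieces and the conductance of the core, neither of which you have sketched, whereas Fountoulakis--Reed sidesteps this entirely once the scale-dependent expansion is in hand.
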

Note that the lower bounds on each of the typical diameter and the typical mixing time follow from the existence of bare paths (that is, paths containing only vertices of degree 2) of length $\Omega_c(d)$ in $L_1$, which can be verified by a simple second moment computation (see, e.g.,~\cite[Section 5]{EKK22}).

The proof of Theorem \ref{th: main}\ref{th: mixing time} builds upon Theorem \ref{th: large tail deviation} which establishes essentially tight large deviation estimates on the order of $L_1$.
\begin{theorem}\label{th: large tail deviation}
Fix $c>1$, let $p=p(d)=c/d$ and denote by $y=y(c)$ the survival probability of a Galton-Watson process with offspring distribution Poisson$(c)$.
Then, there exists a constant $\eps=\eps(c)>0$ such that, for all $t\ge 2^d/d^{0.1}$,
\begin{align*}
\mathbb{P}(|V(L_1)|\ge y2^d+t)\le \exp\bigg(-\frac{\eps t^2}{2^d (\log(2^d/t))^2}\bigg) \quad \text{and}\quad \mathbb{P}(|V(L_1)|\le y2^d-t)\le \exp\bigg(-\frac{\eps t\log(2^d/t)}{d}\bigg).
\end{align*}
\end{theorem}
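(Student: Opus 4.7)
The argument proceeds by two-round exposure (sprinkling). Write $1-p = (1-p_1)(1-p_2)$ with $p_1 = c_1/d$ for a fixed $c_1\in(1,c)$ and $p_2=\Theta(1/d)$ small; let $H_1\sim Q^d_{p_1}$ and $H_2\sim Q^d_{p_2}$ be independent, so that $H_1\cup H_2\sim Q^d_p$. Let $L_1^{(1)}$ denote the giant of $H_1$, which is typically of size $(y(c_1)+o(1))2^d$, and let $L_1$ be that of $H_1\cup H_2$. The common starting observation for both tails is that the residue $R := V(Q^d)\setminus V(L_1)$ is a disjoint union of components of $Q^d_p$, each of order $O(d)$ \whp{} by the results of Bollob\'as, Kohayakawa and \L{}uczak, and in particular every edge of $\partial_{Q^d}(R)$ is absent from $Q^d_p$.

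For the \emph{lower tail}, assume $|V(L_1)|\le y2^d - t$, so $|R|\ge (1-y)2^d + t$. The key probabilistic input is the edge-isoperimetric inequality of Harper/Lindsey: any $S\subseteq V(Q^d)$ with $|S|\le 2^{d-1}$ satisfies $|\partial_{Q^d}(S)|\ge c_0|S|\log_2(2^d/|S|)$ for an absolute $c_0>0$. Applied to whichever of $R$, $V(Q^d)\setminus R$ is smaller, this yields $|\partial_{Q^d}(R)|\ge c_0 t\log_2(2^d/t)$ throughout the regime $2^d/d^{0.1}\le t\le (y-\eta)2^d$, while the remaining sliver of $t$ extremely close to $y2^d$ is handled by a standard expansion argument ruling out the non-existence of a macroscopic component in the supercritical regime. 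Consequently, for a fixed admissible $R$, the event that all of $\partial_{Q^d}(R)$ is absent from $Q^d_p$ has probability at most $(1-p)^{c_0t\log_2(2^d/t)}\le \exp(-\varepsilon t\log(2^d/t)/d)$ for a suitable $\varepsilon>0$. The delicate step is the union bound over the admissible shapes of $R$: the naive count $\binom{2^d}{|R|}$ is vastly too generous, and one must invoke the structural description of the residue as a controlled union of small components, together with the stability principle (both announced in the abstract), in order to rule out configurations in which the deficit disintegrates into very many tiny fragments and to bring the enumeration down to $\exp(o(t\log(2^d/t)/d))$.

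For the \emph{upper tail}, I condition on $H_1$ and decompose
\begin{align*}
|V(L_1)| = |V(L_1^{(1)})| + \sum_{C} |C|\,X_C,
\end{align*}
where the sum is over components $C$ of $H_1$ outside $L_1^{(1)}$ and $X_C = \1[C\text{ is absorbed into the giant under }H_2]$. Conditionally on $H_1$, the $X_C$ are essentially independent Bernoulli variables driven by disjoint edge subsets of $H_2$; each summand has variance at most $|C|^2$, and a Galton--Watson-type estimate for subcritical branching gives $\sum_C |C|^2 = O(2^d)$ \whp. A Bennett/Bernstein-type bound then produces
\begin{align*}
\mathbb{P}\bigl(|V(L_1)|\ge y2^d + t \,\big|\, H_1\bigr) \le \exp\bigl(-\Omega(t^2/(2^d L^2))\bigr),
\end{align*}
where $L$ is the maximum component size relevant to the deviation. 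A fine analysis of component-size tails shows that a deviation of size $t$ is dominated by components of size $\Theta(\log(2^d/t))$ rather than the worst-case $O(d)$, which explains the $\log^2(2^d/t)$ in the denominator of the theorem.

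The \emph{main obstacle} is the lower tail union bound: the enormous entropy $\binom{2^d}{t}$ of potential residues threatens to overwhelm the probabilistic gain from Harper's inequality, and only the combined use of the structural residue description and the stability principle can compress this count to a manageable scale. On the upper tail side, the delicate point is to leverage the component-size tail to bring the effective Lipschitz constant down from $d$ to $\log(2^d/t)$.
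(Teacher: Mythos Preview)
Your proposal has genuine gaps on both tails.

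\textbf{Upper tail.} The decomposition $|V(L_1)| = |V(L_1^{(1)})| + \sum_C |C|\,X_C$ is problematic: small $H_1$-components can merge with one another under $H_2$ rather than with $L_1^{(1)}$, and the indicators $X_C$ are certainly not ``essentially independent'' conditionally on $H_1$ --- whether $C$ reaches the giant depends on the entire path structure of $H_2$, not on edges incident to $C$ alone, so a Bennett/Bernstein bound does not apply. The paper's argument is far more direct and uses no sprinkling: set $Z=\sum_{k\le f(t,n)}|V_k|$ where $V_k$ is the set of vertices in components of order exactly $k$ in $Q^d_p$ and $f(t,n)=\Theta_c(\log(n/t))$. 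Then $|V(L_1)|\ge yn+t$ forces $Z\le\mathbb{E}Z-t/2$, and since flipping a single edge changes $Z$ by at most $2f(t,n)$, the bounded differences inequality yields the denominator $n\log^2(n/t)$ immediately. The truncation at $f(t,n)$ is precisely your ``effective Lipschitz constant $\log(n/t)$'', but obtained without any conditional-independence claims.

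\textbf{Lower tail.} The route via Harper applied to the residue $R$ followed by a union bound over $R$ is not salvageable. The number of candidate vertex sets $R$ of size $(1-y)n+t$ is $\exp(\Theta(n))$, doubly exponential in $d$, whereas the probability that all boundary edges of a fixed such $R$ are closed is at best $\exp(-\Theta(n/d))$; no structural compression of $R$ into a union of small components brings the entropy below $\exp(\Theta(n/d))$, so the union bound loses by a factor of $d$ in the exponent. The paper does \emph{not} union-bound over vertex sets. Instead, after revealing $G_1=Q^d_{p_1}$, it union-bounds over \emph{component-respecting partitions} $A\sqcup B$ of the vertex set $V(\cM_2^-)$ of $G_1$-components of order at least $d^2$; since there are at most $n/d^2$ such components, there are at most $\exp(O(t_1\log(n/t_1)/d^2))$ partitions with $|A|=t_1$, and this is beaten by the sprinkling probability $\exp(-\Theta(t_1\log(n/t_1)/d))$ coming from Harper plus matchings in $Q^d_{p_2}$ (Lemma~\ref{lem:estimate_E2}). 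The well-spreadness, stability, and ``no explosion'' lemmas you allude to are then used not to compress the enumeration of residues, but to bound the gap between $v(\cM_2^-)$ and $v(L_1)$: they control the vertices that lie in large $G_2$-components meeting no large $G_1$-component, and the small $G_1$-components that attach under sprinkling to the non-giant part of $\cM_2^-$.
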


The lower tail in the above theorem is tight for $t\in [2^d/d^{0.1}, y2^d]$ (up to the constant in the exponent). To see this, consider the subcube $Q_0(k)\subseteq Q^d$ of dimension $d-k$ composed of the vertices whose first $k$ coordinates are zero. 
Note that, as long as $p(d-k)$ remains bounded away from 1, \whp the giant component $L_1(k)$ of $(Q_0(k))_p$ contains roughly $y2^{d-k}$ vertices and the number of edges between $L_1(k)$ and $Q^d\setminus Q_0(k)$ is $k|V(L_1(k))|$.
Thus, the probability that the number of vertices in $L_1$ deviates from $y2^d$ by approximately $y2^{d-k}$ is at least $(1-p)^{ky2^{d-k}} = \exp(-cyk2^{d-k}/d)$. Note also, that while the choice of $0.1$ in the exponent of $d$ is rather arbitrary, it cannot be made arbitrarily large; in fact, by estimating from below the deviation of the number of vertices in \textit{small} components, one can see that the lower tail above cannot be better than $\exp\left(-\Omega(t^2/2^d)\right)$, which is worse than the above estimate when $t\ll 2^d/d$.

Let us note that the previous papers \cite{DEKK24,EKK22} that achieved partial progress towards the conjectures resolved in Theorem \ref{th: main} essentially relied on a `direct' sprinkling argument and on the product structure of the hypercube. A key missing ingredient in the previous works was a tight expansion estimate, in particular for large connected sets, in the giant component. Naively, one could hope to obtain such an estimate using a `reverse' sprinkling, also known as `thinning', a process of removing edges with a given probability.
Indeed, if a large set in the giant component has a weak expansion, removing each of the edges in its boundary is `fairly cheap' and results in two large components --- a very atypical structure. However, this does not immediately imply the probability bound of Theorem \ref{th: large tail deviation}, as the set of deleted edges depends on the structure of the exposed random subgraph. It turns out that this constitutes the main challenge in resolving the conjectures. In this paper, we perform a quantitative analysis of small perturbations on the edges of the giant component, allowing us to resolve this challenge (see \Cref{s: outline} for more details). Importantly, our arguments do not rely on the hypercube's product structure, and so they could adapt to other sparse high-dimensional percolation models and to settings where conductance of large sets governs mixing behaviour (see, e.g., \cite{AKLP23,AKP21,BBY08,CS24,JKM24,MD18,MMJ24,OCo98}).

As mentioned above, a key ingredient in the proof of Theorem \ref{th: main}\ref{th: mixing time} lies in \Cref{prop: expansion large} and \Cref{cor: new expansion}, which describe the expansion properties of the giant. The following theorem is an abbreviated version of these two results, which resolves \cite[Question 5.1]{EKK22} and might be of independent interest.
\begin{theorem}\label{th: expansion}
Recall $c > 1$, $p=p(d)=c/d$ and $y=y(c)$ from \Cref{th: large tail deviation}. There exists a constant $\eps=\eps(c)>0$ such that \textbf{whp}, for every subset $S\subseteq V(L_1)$ such that $|S|\le |V(L_1)|/2$ and $L_1[S]$ is a connected graph, $L_1$ contains at least $\eps |S|/d$ edges between $S$ and $L_1\setminus S$.
Furthermore, for any constant $\delta\in(0,1)$, there exists a constant $\eta\coloneqq \eta(c,\delta)>0$ such that \textbf{whp} for every subset $S\subseteq V(L_1)$ with $|S|\in [\delta y 2^d,(1-\delta) y 2^d]$, $L_1$ contains at least $\eta |S|/d$ edges between $S$ and $L_1\setminus S$.
\end{theorem}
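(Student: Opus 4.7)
The plan is to first reduce statement (b) to statement (a) by decomposing $L_1[S]$ into its connected components, and then to establish (a) via a thinning argument supported by \Cref{th: large tail deviation}.

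\emph{Reduction of (b) to (a).} For any $S\subseteq V(L_1)$, let $S_1,\dots,S_k$ be the vertex sets of the connected components of $L_1[S]$. Since no $L_1$-edge can run between distinct $S_i$ (otherwise they would merge in $L_1[S]$), $|\partial_{L_1}S|=\sum_i |\partial_{L_1} S_i|$. By swapping $S$ with $V(L_1)\setminus S$ if necessary (using that $|V(L_1)|-|S|$ remains of linear order), we may assume $|S|\le |V(L_1)|/2$, so that each $|S_i|\le |V(L_1)|/2$ and (a) applies componentwise to give $|\partial_{L_1}S|\ge (\eps/d)|S|$. Accounting for the size ratio introduced by a possible swap yields (b) with $\eta = \eps\delta/(2(1-\delta))$.

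\emph{Proof of (a).} Fix a small $q=q(c)>0$ with $c':=c(1-q)>1$ and use the standard coupling $Q^d_{p(1-q)}\subseteq Q^d_p$ by independent thinning, retaining each edge of $Q^d_p$ with probability $1-q$; let $L_1'$ denote the giant of the thinned graph. By \Cref{th: large tail deviation} applied to both graphs and the $O(d)$-bound on non-giant components of $Q^d_p$ (from~\cite{BKL92}), \textbf{whp} $|V(L_1')| = (y(c')+o(1))2^d$ and $V(L_1')\subseteq V(L_1)$. For $|S|\le d/\eps$ the conclusion is immediate, as $|\partial_{L_1}S|\ge 1\ge \eps|S|/d$ by connectivity of $L_1$. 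Otherwise, call $S$ \emph{bad} if it is connected in $L_1$, $d/\eps<|S|\le|V(L_1)|/2$, and $|\partial_{L_1}S|<\eps|S|/d$; let $S^*$ be a canonical bad set when it exists. Conditional on $Q^d_p$, the probability that thinning removes every boundary edge of $S^*$ is $q^{|\partial_{L_1}S^*|}\ge q^{\eps|S^*|/d}$, and on this event $Q^d_{p(1-q)}$ contains no edge between $S^*$ and $V\setminus S^*$. Since $L_1'$ is connected and contained in $V(L_1)$, it must then lie in $V(L_1)\setminus S^*$, forcing $|V(L_1')|\le y\,2^d-|S^*|+o(2^d)$. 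Whenever $|S^*|\ge Kq\cdot 2^d$ for a suitably large constant $K$, this contradicts $|V(L_1')|\ge y(c')2^d-o(2^d)$ (using $y(c)-y(c')=O(q)$), an event of probability $\exp(-\Omega(2^d/d))$ by \Cref{th: large tail deviation}. Rearranging gives $\mathbb{P}(\text{bad }S^*\text{ of linear size exists})\le q^{-\eps y 2^d/d}\exp(-\Omega(2^d/d))=o(1)$ for $\eps$ small enough (depending on $q$, hence on $c$).

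The intermediate regime $|S^*|\in (d/\eps,\,Kq\cdot 2^d)$ is where the main difficulty lies. The same cut argument forces $S^*$ to host one or more non-giant components of $Q^d_{p(1-q)}$, which would contradict the $O(d)$-bound on non-giant components as soon as some connected chunk of $S^*$ of size $\omega(d)$ survives the thinning. The obstacle --- the \emph{stability principle} flagged in the paper's outline --- is that if $L_1[S^*]$ happens to be (nearly) a tree, the thinning shatters it into pieces of size $O(d/q)$, possibly within the BKL allowance, and no contradiction is available. I would resolve this by invoking the structural decomposition of $L_1$ into its $2$-edge-connected core $K$ and the pendant trees hanging off $K$, combined with the fact (from~\cite{DEKK24}) that every pendant tree has $O_c(d)$ vertices \textbf{whp}. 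If $S^*$ lies inside a single pendant tree then $|S^*|=O(d)$ and the trivial range applies; otherwise $S^*\cap V(K)$ is nonempty and connected in the contracted graph, and inherits expansion from $K$ (whose $2$-edge-connectivity provides stability under thinning), while the pendant material attached to $S^*\cap V(K)$ inflates the cardinality by at most a factor $O(d)$. The expansion of $K$ itself is then obtained by applying the linear-size thinning argument to the core, closing the proof.
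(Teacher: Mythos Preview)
Your reduction of (b) to (a) by decomposing $L_1[S]$ into connected components is valid and arguably tidier than what the paper does; the paper instead proves (b) directly as \Cref{prop: expansion large} (no connectedness assumed), using essentially the same thinning-plus-lower-tail idea you give for the linear regime. Your treatment of the linear-size case $|S^*|\ge Kq\cdot 2^d$ is likewise correct and matches the paper's \Cref{prop: expansion large}.

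The gap is the intermediate regime, and your proposed 2-core fix does not close it. Two problems. First, ``2-edge-connectivity provides stability under thinning'' is not a valid principle: a long cycle is 2-edge-connected yet shatters into $O(1/q)$-size arcs under $q$-thinning, so a sublinear connected piece of the core can still disintegrate. Second, the last sentence is circular --- you propose to obtain expansion of \emph{sublinear} connected sets in the core by ``applying the linear-size thinning argument to the core'', but that argument only controls linear-size sets, which is exactly the range you already have. (The $O_c(d)$ bound on pendant trees you attribute to~\cite{DEKK24} is also not stated there; you may be conflating it with the $O(d)$ bound on non-giant components from~\cite{BKL92}.)

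The paper's route through the intermediate regime is different and does not use any structural decomposition of $L_1$. For $k\le n^{1-\gamma}$ it simply quotes the earlier expansion bounds of~\cite{DEKK24} (\Cref{thm: old expansion}). For $k\in[n^{1-\gamma},\Theta(n)]$ it proves \Cref{prop: expansion}, whose engine is precisely the stability principle you named but then abandoned: \Cref{l: no surprises from outside_gen} shows, via a direct weighted-forest enumeration, that the probability a size-$k$ set connected in $G_2$ has no $G_0$-boundary \emph{and} has at least $(1-\eta)k$ of its vertices in small $G_0$-components is at most $\e^{-\Omega(k)}$. On the complement, $S$ carries $\ge\eta k$ vertices in large $G_0$-components, and the sprinkling bound (\Cref{lem:estimate_E2}) then shows these cannot be separated from the rest of $\cM_2^{(0)}$ except with probability $\e^{-\Omega(k\log(n/k)/d)}$. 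That counting lemma is what replaces your 2-core heuristic.
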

Since by the above (not necessarily connected) sets of linear (in $2^d$) size typically edge-expand by a factor of $\Omega(1/d)$, using techniques from \cite{K19} (in particular, \cite[Lemma 2.6 and Theorem~2.7]{K19}) implies the existence of an $\Omega(1/d)$-expander in $L_1$ of linear order, resolving \cite[Question 5.1]{EKK22}. Indeed, the latter result is tight (up to a constant factor), as already noted in \cite[Claim 5.2]{EKK22}.

Finally, complementing results on critical percolation~\cite{B-RBN24,BCVSS06,DKLP11,HS04,HN20} by pinning down the mixing time of a lazy simple random walk on the giant component in the barely-supercritical regime $dp=1+o(1)$ is an intriguing and natural next step, and we believe our techniques can provide key insights to it.

\vspace{1em}

\noindent
\textbf{Organisation.} This paper is structured as follows. In \Cref{s: outline}, we provide an outline for the proofs of Theorems~\ref{th: main} and~\ref{th: large tail deviation}. 
In \Cref{s: prelim}, we introduce notation and collect several auxiliary lemmas. 
In \Cref{s: upper tail}, we prove the upper tail estimates in \Cref{th: large tail deviation}. 
Then, in \Cref{s: lower tail}, we prove the lower tail estimates in \Cref{th: large tail deviation}. 
We use this theorem to prove \Cref{th: main}\ref{th: mixing time} in \Cref{s: mixing time}. Finally, \Cref{th: main}\ref{th: diameter} is shown in \Cref{s: diameter}.

\section{Outline of the proofs}\label{s: outline}
We first outline the proof of \Cref{th: large tail deviation}. 
It is divided into two parts. The proof of the upper tail deviation, presented in Section~\ref{s: upper tail}, is simpler. 
It relies on the observation that, if $L_1$ spans significantly more vertices than expected, then the number of vertices in small components falls significantly below its expectation. 
We bound the probability of the latter event via a routine application of the bounded difference inequality (\Cref{azuma}). 

\Cref{s: lower tail} is dedicated to the proof of the lower tail in \Cref{th: large tail deviation}. 
Before delving into the proof's outline, we first recall the broad strategy used in \cite{AKS81, BKL92, K23} to show that \textbf{whp} the number of vertices in the giant component $L_1$ of $Q^d_p$ is at least $(1-o(1))y2^d$ (with $p,y$ as defined in \Cref{th: large tail deviation}). We note that a similar overall strategy was employed in \cite{DEKK24, EKK22} to establish expansion properties of the giant component $L_1$. The argument goes through sprinkling, also known as multi-stage exposure. In the paragraph below, we give a brief sketch of that proof.

Fix suitably small $\delta=\delta(c)>0$ and define $p_1 = (c-\delta)/d$ and $p_2$ such that $(1-p_1)(1-p_2)=1-p$. In particular, $p_2\ge \delta/d$ and we further assume that $p_1d>1$. Let $G_1=Q^d_{p_1}\subseteq G_2=Q^d_{p_1}\cup Q^d_{p_2}$, where the graphs in the union are sampled independently. Note that $Q^d_p$ has the same distribution as $G_2$. 
First, one considers $G_1$ and shows that \textbf{whp} the number of vertices in `big' components of $G_1$ is at least $(1-o(1))y(c-\delta)2^d$.\footnote{Note that $y(c-\delta)$ stands for the survival probability of a Galton-Watson process with offspring distribution Poisson$(c-\delta)$ and not for $y\cdot (c-\delta) = y(c)\cdot (c-\delta)$. To distinguish the two, we always write a dot in the latter case.} 
Then, one shows that \textbf{whp} every vertex in $Q^d$ is within distance two (in $Q^d$) to a vertex in a `big' component of $G_1$. 
The final step of the proof is to argue that after sprinkling, i.e. in $Q^d_{p_1}\cup Q^d_{p_2}$, \textbf{whp} all vertices in `big' components of $G_1$ merge into one giant component in $G_2$ of order at least $(1-o(1))y(c-\delta)2^d$. Since $\delta$ is an arbitrarily small constant and $y$ is a continuous function, this is enough to complete the proof. 

There are several obstacles in adapting the above sprinkling argument to obtain \textit{tight} lower-tail estimates for $|V(L_1)|$. First, the estimates on the order of the giant arise from the number of vertices in `large' components in $G_1$. 
This is insufficient, since $y(c)-y(c-\delta)=\Theta(\delta)$ for a small constant $\delta$ (see Lemma~\ref{lem:yb}). This means that when $\delta=\Theta(1)$, one cannot study deviations of the order $o(n)$ by only taking into account the number of vertices in `large' components in $G_1$, since these are off by $\Theta(\delta)2^d$. Decreasing $\delta$ is also not possible, since the probability of merging these components depends on $\delta$. As noted in the introduction, a natural approach to overcome this is to introduce `reverse sprinkling', also known as thinning; however, this thinning procedure cannot depend on the exposed random graph. Thus, a new outlook, proving that `large' connected sets in the giant have a `good' expansion in the giant is required (and developed) here.

Our starting point is a structural description of `large' components that could emerge outside the giant in $G_2$: vertices that lie in components formed by merging many small $G_1$-components after sprinkling, which we call type-1; and vertices that coalesce with a few `large' $G_1$-components, which we call type-2. In order to handle these vertices, we establish a \textit{stability principle} (\Cref{l: no surprises from outside_gen}), which shows that it is highly unlikely for a `large' set in $G_2$ to disintegrate into mostly small components in $G_1$. We note that this principle is crucial for deriving stronger probability bounds, and it plays a central role both in the lower-tail analysis and in the subsequent expansion arguments. To obtain tight estimates using this principle, we further utilise a weighted decomposition and sparsification arguments (\Cref{claim: no sudden collapse help} and \Cref{l: sparsification}), and develop a quantitative \textit{well-spreadness} argument for `large' $G_1$-components (\Cref{l: large components are well spread}), on which we elaborate in the subsequent paragraph.

Indeed, for type-2 vertices, we show that, with very high probability, almost all the vertices outside `large' components of $G_1$ have many $Q^d$-neighbours in `large' $G_1$-components (recall that in the proof of the existence of a giant component, the property that \textbf{whp} every vertex is within distance two to a `large' component in $G_1$ was utilised). Writing $\mathcal B_\varepsilon$ for the set of vertices in $Q^d$ that see fewer than $\varepsilon d$ neighbours in `large' $G_1$-components, we show that $|\mathcal B_\varepsilon|$ has an exponential tail. While the events that are associated with vertices that are close to each other are highly correlated, in \Cref{cl:associate} we are able to construct algorithmically a large subfamily of $\cB_{\eps}$ where the events are almost independent. 

Type-1 vertices are handled by the weighted decomposition and sparsification steps. Intuitively, if many vertices ended up in type-1 components, then a large family of small $G_1$-components should have had an unusually large number of external incident edges that fail to appear after the sprinkling. \Cref{claim: no sudden collapse help} together with \Cref{l: sparsification} allow to isolate a sufficiently small number of extremely unlikely events that imply this property.

We move to the proof of Theorem \ref{th: main}\ref{th: mixing time}. 
In \Cref{subsec: expansion}, we use the results in \Cref{s: lower tail} to show that `large' connected sets in $L_1\subseteq G_2$ typically expand well (Propositions \ref{prop: expansion} and~\ref{prop: expansion large}). 
This shows \Cref{cor: new expansion}, which also implies \Cref{th: expansion}.
Utilising expansion estimates to obtain bounds on the mixing time was used in previous works on hypercube percolation~\cite{DEKK24, EKK22}: a missing ingredient there was a good quantitative estimate on the expansion of `large' sets, which is a major contribution of this paper. 
The proof of \Cref{prop: expansion} is based on the following argument: assuming that a `large' set $S$ in $G_2$ has a small edge-boundary, the probability that $S$ is disjoint from the largest component of $G_1$ (or itself becomes such) is significant. 
As it turns out, the probability bound on the event that $S$ is disjoint from the largest component in $G_1$ obtained from analysis of the sprinkling between $G_1$ and $G_2$ is tight in general. However, for our needs, we derive a stronger probability bound which can hold \textit{only} for \textit{connected} sets (see Section \ref{subsec: expansion} and the discussion therein). To that end, we utilise the rather general stability principle (\Cref{l: no surprises from outside_gen}), which was also used to prove Theorem \ref{th: large tail deviation}. 
Roughly speaking, \Cref{l: no surprises from outside_gen} shows that it is very unlikely for a `large' connected set in $G_2$ to disintegrate into \textit{mostly} small components in $G_1$ (and, in particular, such a set is far from a type-1 component). Thus, with a significant probability, we must have many vertices in `big' components in $G_1$ outside $L_1$ --- an event whose probability we already estimate in Section \ref{s: lower tail}.
Combining the expansion properties of connected subsets in the giant component with previously known expansion estimates for $Q^d_p$ \cite{DEKK24}, one can derive \Cref{th: main}\ref{th: mixing time} by using a theorem of Fountoulakis and Reed \cite{FR07a} (stated here as \Cref{th: mixing-time-tool}). 
The latter result relates the mixing time of a lazy simple random walk on a graph to the expansion properties of connected sets therein.

Finally, we turn to the proof of Theorem \ref{th: main}\ref{th: diameter}. It utilises two key ideas. 
First, we show that, for any given pair of vertices in $Q^d$, with probability $d^{-O(1)}$, the distance between them in $Q^d_p$ is of order $O(d)$ (\Cref{lem:bootstrap_7.1}). 
Second, using typical expansion properties of the giant (\Cref{cor: new expansion}), we observe that \textbf{whp} every vertex in the giant is connected by short paths to many vertices. 
Then, to find a path of length $\Omega(d)$ between a pair of vertices $u,v$, we construct \textit{many} vertex-disjoint subcubes: each of them has its all-0 vertex in the neighbouring set of $u$ and its all-1 vertex in the neighbouring set of $v$.
This allows us to bootstrap the inverse-polynomial probability bound from \Cref{lem:bootstrap_7.1} to a \textbf{whp} statement, showing that indeed every pair is typically within distance $\Theta(d)$. 
We note that the proof of \Cref{lem:bootstrap_7.1} utilises careful switching and enumeration arguments, focusing on specific types of paths which are easier to analyse (see Sections~\ref{subsec:firstmoment} and \ref{subsec:secondmoment}).

\section{Preliminaries}\label{s: prelim}
\subsection{Notation}
Given a graph $G=(V,E)$, we denote its \textit{order} by $v(G)\coloneqq |V|$. 
For every two subsets $A,B\subseteq V$, we denote by $e_G(A,B)$ the number of edges with one endpoint in $A$ and one endpoint in $B$ (where edges in $A\cap B$ are counted twice), and by $N_G(A)$ the \emph{neighbourhood of $A$ in $G$}, that is, the set of vertices in $V\setminus A$ with at least one neighbour in $A$. 
We further write $G[A]$ for the subgraph of $G$ induced by $A$ with $e_G(A)\coloneqq |E(G[A])|$, and $G[A,B]$ for the subgraph of $G$ induced by the edges between two disjoint sets $A,B\subseteq V$. 
Given $u,v\in V$, we denote by $d_G(u,v)$ the graph distance (in $G$) between $u$ and $v$. 
In general, when the graph $G$ is clear from the context, we sometimes omit the subscript.

For a set $S\subseteq V(Q^d)$ and a spanning subgraph $G$ of $Q^d$, we say that $S$ is \emph{connected in $G$} if $G[S]$ is a connected graph. We say that $S$ is \emph{connected} if it is connected in $Q^d$.
For a family of disjoint sets of vertices $\cC$ in a graph $G$, we set $V(\cC)\coloneqq \bigcup_{C\in \cC}V(C)$, $v(\cC):=|V(\cC)|$ and $e(\cC)=\sum_{C\in \cC}e(C,V(G)\setminus C)$. 
We further denote by $e_{\mathrm{out}}(\cC)$ the number of edges with exactly one endpoint in $V(\cC)$, and by $e_{\mathrm{in}}(\cC)$ the number of edges with endpoints in two distinct sets of $\cC$. 
In particular, $e(\cC)=e_{\mathrm{out}}(\cC)+2e_{\mathrm{in}}(\cC)$. 
We denote by $\textbf{0}$ the all-$0$-vertex in $Q^d$, and by $\textbf{1}$ the all-$1$-vertex in $Q^d$.
For a vertex $v\in V(Q^d)$, we denote by $v(i)$ the $i$-th coordinate of $v$, and by $\mathrm{supp}(v)$ the \emph{support} of $v$, that is, set of indexes $i\in [d]$ such that $v(i)=1$. 

We use standard asymptotic notation. When the implicit constants therein depend on some parameter, we indicate this parameter as a lower right index: for example, $O_c$ or $\Omega_\eps$. Throughout the paper, we systematically ignore rounding signs as long as it does not affect the validity of our arguments.

\subsection{Concentration inequalities}
We start by presenting a version of the well-known Chernoff's bound for binomial random variables (see, for example, \cite[Theorem 2.1]{JLR00}).
\begin{lemma}\label{chernoff}
For a binomial random variable $X$ and any $t\in [0,\mathbb E[X]/2]$, 
\begin{align*}
\mathbb{P}\left(\big|X-\mathbb E[X]\big|\ge t\right)\le \exp\left(-\frac{t^2}{3\mathbb E[X]}\right).
\end{align*}
\end{lemma}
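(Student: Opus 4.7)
The plan is to use the Cramér–Chernoff (exponential moment) method applied to $X=\sum_{i=1}^n X_i$ with the $X_i$ i.i.d.\ Bernoulli$(p)$ summands, writing $\mu:=\mathbb{E}[X]=np$. First, I would control the moment generating function: for any $\lambda\in\mathbb{R}$,
\[
\mathbb{E}[e^{\lambda X}]=(1-p+pe^{\lambda})^n\le \exp\bigl(np(e^{\lambda}-1)\bigr)=\exp\bigl(\mu(e^{\lambda}-1)\bigr),
\]
where the inequality uses the elementary bound $1+x\le e^x$ applied coordinate-wise.

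Next, for the upper tail I would apply Markov's inequality to $e^{\lambda X}$ with $\lambda>0$, obtaining
\[
\mathbb{P}(X\ge \mu+t)\le \exp\bigl(\mu(e^{\lambda}-1)-\lambda(\mu+t)\bigr),
\]
and optimise by taking $\lambda=\log(1+t/\mu)$. This yields the standard Cramér bound $\mathbb{P}(X\ge \mu+t)\le \exp(-\mu\,h(t/\mu))$, where $h(u):=(1+u)\log(1+u)-u$. The analogous choice $\lambda=\log(1-t/\mu)<0$ for the lower tail gives $\mathbb{P}(X\le \mu-t)\le \exp(-\mu\,\widetilde h(t/\mu))$ with $\widetilde h(u):=(1-u)\log(1-u)+u$. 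Combining both tails yields the two-sided form.

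Finally, I would convert these bounds into the stated form $\exp(-t^2/(3\mu))$ using the elementary inequalities $h(u)\ge \frac{u^2}{2+2u/3}$ and $\widetilde h(u)\ge u^2/2$ for $u\in[0,1)$. For $u=t/\mu\in[0,1/2]$ (which is exactly the hypothesis $t\le\mu/2$), the first inequality gives
\[
\mu\,h(t/\mu)\ge \frac{t^2}{2\mu+2t/3}\ge \frac{t^2}{2\mu+\mu/3}=\frac{3t^2}{7\mu}\ge \frac{t^2}{3\mu},
\]
and the lower-tail estimate is even stronger. Taking the union of the two tail events and noting that the resulting constant $3$ absorbs the factor of $2$ completes the proof.

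The only non-routine ingredient is the auxiliary inequality $h(u)\ge u^2/(2+2u/3)$, which is standard and can be verified by Taylor-expanding $h$ around $0$ and checking the sign of a derivative; I expect no real obstacle, since this lemma is a textbook Chernoff bound whose proof is largely bookkeeping once the MGF estimate is in place.
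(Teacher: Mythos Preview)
The paper does not prove this lemma; it simply states it as a standard Chernoff bound with a citation to \cite[Theorem~2.1]{JLR00}. Your approach via the Cram\'er--Chernoff method is the standard one and correctly yields the one-sided estimates $\mathbb{P}(X\ge\mu+t)\le\exp(-3t^2/(7\mu))$ and $\mathbb{P}(X\le\mu-t)\le\exp(-t^2/(2\mu))$ for $t\le\mu/2$.

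There is, however, a genuine gap in your final step. The assertion that ``the resulting constant $3$ absorbs the factor of $2$'' is not justified, and in fact cannot be: for $2\exp(-3t^2/(7\mu))\le\exp(-t^2/(3\mu))$ to hold one needs $t^2\ge\tfrac{21}{2}(\log 2)\mu$, which fails for small $t$. More to the point, the lemma as stated in the paper is actually false without the factor of $2$: take $X\sim\mathrm{Bin}(1,1/2)$, so $\mu=1/2$, and $t=1/4\in[0,\mu/2]$. Then $|X-\mu|=1/2$ almost surely, so $\mathbb{P}(|X-\mu|\ge t)=1$, whereas $\exp(-t^2/(3\mu))=\exp(-1/24)<1$. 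A similar failure occurs for $X\sim\mathrm{Bin}(2,0.1)$ with $t=0.1$. The correct statement (and what the cited reference actually gives) has either separate one-sided bounds or a factor of $2$ on the right-hand side; this discrepancy is immaterial for every application in the paper, but your proof sketch cannot be completed to yield the inequality exactly as written.
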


For a constant $C > 0$ and a domain $\Lambda = \Lambda_1\times\ldots\times \Lambda_m\subseteq \mathbb R^m$, a function $f: \Lambda\to \mathbb R$ is said to be \emph{$C$-Lipschitz} if, for every $i\in [m]$, $(z_j)_{j=1}^m\in \Lambda$ and $z_i'\in \Lambda_i$, we have
\[|f(z_1,\ldots, z_{i-1}, z_i, z_{i+1},\ldots, z_m) - f(z_1,\ldots, z_{i-1}, z_i', z_{i+1},\ldots , z_m)| \le C.\]
Next, we state a variant of the bounded difference inequality (see, e.g., Theorem 3.9 in~\cite{M98} and Corollary~6 in~\cite{War16}).

\begin{lemma}\label{azuma}
Fix $p\in [0,1]$ and a vector $X = (X_1,X_2,\ldots, X_m)$ of independent Bernoulli$(p)$ random variables.
Fix $C>0$ and a $C$-Lipschitz function $f:\{0,1\}^m\to \mathbb R$. Then, for every $t\ge 0$,
\begin{align*}
\mathbb{P}\left(\big|f(X)-\mathbb{E}\left[f(X)\right]\big|\ge t\right)\le 2\exp\bigg(-\frac{t^2}{2C^2mp+2Ct/3}\bigg).
\end{align*}
\end{lemma}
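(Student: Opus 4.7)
The plan is to prove the inequality via the standard Doob martingale associated with $f(X)$, combined with a Bernstein-type (Freedman) bound for martingales with bounded differences and controlled conditional variance. The key advantage over plain Azuma--Hoeffding is that the Bernoulli inputs have variance $p(1-p)\le p$, which sharpens the denominator from $2C^2m$ to $2C^2 mp + 2Ct/3$.

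\textbf{Step 1: Doob martingale setup.} Let $\mathcal{F}_i = \sigma(X_1,\ldots,X_i)$ for $0\le i\le m$ (so $\mathcal{F}_0$ is trivial), and define $M_i = \mathbb{E}[f(X)\mid \mathcal{F}_i]$. Then $(M_i)$ is a martingale with $M_0 = \mathbb{E}[f(X)]$ and $M_m = f(X)$. Writing $D_i = M_i - M_{i-1}$, the Lipschitz hypothesis gives $|D_i|\le C$ almost surely: indeed, conditional on $\mathcal{F}_{i-1}$, $M_i$ is a function of $X_i$ alone, so setting $g_i(x) := \mathbb{E}[f(X)\mid X_1,\ldots,X_{i-1},X_i=x]$ for $x\in\{0,1\}$, the Lipschitz property implies $|g_i(1)-g_i(0)|\le C$, and hence both $|M_i - \mathbb{E}_{X_i}[M_i]|\le C$ and the quantity $D_i$ are at most $C$ in absolute value.

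\textbf{Step 2: Conditional variance bound.} With the same notation, given $\mathcal{F}_{i-1}$ we have $M_i = g_i(X_i)$, so
\[
\mathbb{E}[D_i^2\mid \mathcal{F}_{i-1}] \;=\; \mathrm{Var}(g_i(X_i)) \;=\; p(1-p)\bigl(g_i(1)-g_i(0)\bigr)^2 \;\le\; p\,C^2.
\]
Summing over $i$ gives the deterministic predictable-quadratic-variation bound $V := \sum_{i=1}^m \mathbb{E}[D_i^2\mid \mathcal{F}_{i-1}] \le C^2 mp$.

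\textbf{Step 3: Freedman's inequality.} For any martingale with $|D_i|\le C$ and $\sum_{i=1}^m \mathbb{E}[D_i^2\mid\mathcal{F}_{i-1}]\le V$ almost surely, Freedman's inequality gives
\[
\mathbb{P}\bigl(|M_m - M_0|\ge t\bigr) \;\le\; 2\exp\!\left(-\frac{t^2}{2V + 2Ct/3}\right)
\]
for all $t\ge 0$. Substituting $V\le C^2mp$ and $M_m-M_0 = f(X) - \mathbb{E}[f(X)]$ yields exactly the stated estimate. The cited references (Theorem~3.9 of~\cite{M98} and Corollary~6 of~\cite{War16}) package precisely this argument.

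I do not expect a genuine obstacle here: the only mild subtlety is verifying the variance bound $p(1-p)(g_i(1)-g_i(0))^2$ rather than the cruder Azuma bound $C^2/4$, which is what produces the factor of $p$ (and hence the Bernstein-type scaling) in the denominator of the exponent. Once that bookkeeping is in place, the statement reduces to quoting Freedman's inequality.
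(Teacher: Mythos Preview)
Your argument is correct. The paper does not prove this lemma at all: it is stated with the remark ``see, e.g., Theorem~3.9 in~\cite{M98} and Corollary~6 in~\cite{War16}'' and is used as a black box throughout. Your proof via the Doob martingale and Freedman's inequality is the standard derivation underlying those references, and all the details you give (in particular the conditional variance bound $\mathrm{Var}(g_i(X_i))=p(1-p)(g_i(1)-g_i(0))^2\le pC^2$, which is what distinguishes this Bernstein-type bound from plain Azuma--Hoeffding) are accurate.
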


We end this section with a switching lemma reminiscent of~\cite[Theorem~2.19]{Wor99}.
Fix integers $k,d\ge 1$ and denote by $\cS_d(k)$ the family of sequences of length $kd$ where every element in $[d]$ appears $k$ times.
A \emph{switching} consists of exchanging the positions of two elements in a sequence. 
For two sequences $\sigma_1,\sigma_2\in \cS_d(k)$, we write $\sigma_1\sim \sigma_2$ if they differ by a single switching.

\begin{lemma}\label{lem:switchings}
Fix $c>0$ and a function $f:\cS_d(k)\to\mathbb R$. Suppose that, for every pair $\sigma_1,\sigma_2\in \cS_d(k)$ with $\sigma_1\sim \sigma_2$, we have $|f(\sigma_1)-f(\sigma_2)|\le c$. 
Let $\sigma$ be chosen uniformly at random from $\cS_d(k)$. Then, for every $t\ge 0$,
\[\mathbb P(|f(\sigma)-\mathbb E[f(\sigma)]|\ge t)\le 2\exp\bigg(-\frac{t^2}{2c^2kd}\bigg).\]
\end{lemma}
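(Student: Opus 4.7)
The plan is to reduce the problem to a concentration inequality for Lipschitz functions on the symmetric group $S_{kd}$, where a standard Doob-martingale plus Azuma-Hoeffding argument applies cleanly. Specifically, I let $\pi$ be uniform on $S_{kd}$ and define $\sigma(\pi)\in\cS_d(k)$ by $\sigma(\pi)(i)=\lceil\pi(i)/k\rceil$ for $i\in[kd]$. A routine count shows that every $\sigma\in\cS_d(k)$ has the same number $(k!)^d$ of preimages under this map, so $\sigma(\pi)$ is uniform on $\cS_d(k)$; it therefore suffices to establish the claimed tail bound for $\tilde f(\pi):=f(\sigma(\pi))$.

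The central observation is that $\tilde f$ is $c$-Lipschitz with respect to transpositions of $\pi$: if $\pi^{*}$ differs from $\pi$ by transposing the entries at two positions $i_1,i_2$, then $\sigma(\pi^{*})$ either coincides with $\sigma(\pi)$ (when $\sigma(\pi)(i_1)=\sigma(\pi)(i_2)$) or differs from $\sigma(\pi)$ by a single switching, so $|\tilde f(\pi)-\tilde f(\pi^{*})|\le c$ in either case. I then introduce the Doob martingale $M_i=\mathbb E[\tilde f(\pi)\mid \pi(1),\ldots,\pi(i)]$ for $0\le i\le kd$, with $M_0=\mathbb E[\tilde f(\pi)]$ and $M_{kd}=\tilde f(\pi)$, and claim that $|M_i-M_{i-1}|\le c$ almost surely. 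Indeed, after fixing a prefix $\pi(1),\ldots,\pi(i-1)$, for any two distinct remaining values $v,v'$ the conditional supports $A_v, A_{v'}\subseteq S_{kd}$ (those permutations extending the prefix with $i$-th entry $v$, resp.\ $v'$) both have cardinality $(kd-i)!$. The map $\phi:A_v\to A_{v'}$ that sends $\pi^{*}\in A_v$ to the permutation obtained by transposing the entries of $\pi^{*}$ at positions $i$ and $j$, where $j$ is the unique position with $\pi^{*}(j)=v'$, is a bijection pairing permutations that differ by a single transposition. Hence $|\tilde f(\pi^{*})-\tilde f(\phi(\pi^{*}))|\le c$ for each $\pi^{*}\in A_v$; averaging and using that $M_i-M_{i-1}$ is a convex combination of the differences $\mathbb E[\tilde f\mid A_v]-\mathbb E[\tilde f\mid A_{v'}]$ yields the claim.

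With $|M_i-M_{i-1}|\le c$ established for each of the $kd$ martingale steps, Azuma-Hoeffding gives
\[\mathbb P(|f(\sigma)-\mathbb E[f(\sigma)]|\ge t)=\mathbb P(|\tilde f(\pi)-\mathbb E[\tilde f(\pi)]|\ge t)\le 2\exp\bigg(-\frac{t^2}{2c^2 kd}\bigg),\]
exactly the claimed bound. The only non-routine step in this plan is the transfer of the switching-Lipschitz hypothesis on $\cS_d(k)$ to a transposition-Lipschitz hypothesis on $S_{kd}$, but this follows directly from the definitions of $\sim$ and the lift $\sigma(\pi)$; a direct martingale argument on $\cS_d(k)$ would also work but is slightly less clean because the conditional supports $\{\tau\in\cS_d(k):\tau(j)=s_j,\,j<i,\,\tau(i)=v\}$ need not have equal sizes for different $v$, forcing one to replace the bijection by a biregular double-counting argument.
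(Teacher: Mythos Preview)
Your proof is correct and essentially identical to the paper's own argument: both lift a uniform sequence in $\cS_d(k)$ to a uniform permutation of $[kd]$ via a $(k!)^d$-to-$1$ projection (the paper replaces $j,j+d,\ldots,j+(k-1)d$ by $j$, you use $\lceil\,\cdot\,/k\rceil$), observe that a transposition in $S_{kd}$ induces at most a single switching downstairs, and then bound the Doob-martingale increments by coupling the conditional distributions $A_v,A_{v'}$ via the transposition bijection before applying Azuma. Your closing remark about why a direct martingale on $\cS_d(k)$ is less clean is also apt and matches the paper's implicit motivation for the lift.
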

\begin{proof}
Consider the map $\psi:\cS_{kd}(1)\to \cS_{d}(k)$ which takes a sequence $\pi'\in \cS_{kd}(1)$ and outputs a sequence $\pi \in \cS_{d}(k)$ obtained by replacing the occurrences of $j+d,\cdots,j+(k-1)d$ in $\pi'$ with $j$ for every $j\in [d]$. 
For every element $\pi\in \cS_{d}(k)$ there exist exactly $(k!)^d$ elements $\cS_{kd}(1)$ which are mapped to $\pi$. 
Thus, defining $\sigma'$ to be a uniformly chosen sequence in $\cS_{kd}(1)$, it suffices to prove that, for every $t\geq 0$, 
\[\mathbb P(|f(\psi(\sigma'))-\mathbb E[f(\psi(\sigma'))]|\ge t)\le 2\exp\bigg(-\frac{t^2}{2c^2kd}\bigg).\]

For every $i\in [0,kd]$, set $X_i=\mathbb E[f(\psi(\sigma'))\mid \sigma'(1),\ldots,\sigma'(i)]$ and note that $(X_i)_{i=0}^{kd}$ is a martingale. We show that, 
\begin{equation}\label{eq:lip}
\text{for every $i\in [kd]$,}\qquad |X_i-X_{i-1}|\le c.
\end{equation}
The lemma follows by combining \eqref{eq:lip} and Azuma's inequality (see e.g.\ \cite[Theorem~2.25]{JLR00}).

Observe that, 
\begin{align*}
    X_{i-1}
    &=\mathbb E[X_i\mid \sigma'(1),...,\sigma'(i-1)]\\
    &=\sum_{s\in [kd]\setminus \{\sigma'(j):1\leq j<i\}}\mathbb E[X_i\mathds{1}_{\sigma'(i)=s}\mid \sigma'(1),...,\sigma'(i-1)]\\
    &=\sum_{s \in [kd]\setminus \{\sigma'(j):1\leq j<i\}} \mathbb P(\sigma'(i)=s|\sigma'(1),...,\sigma'(i-1))\cdot\mathbb E[X_i|\sigma'(1),...,\sigma'(i-1),\sigma'(i)=s]\\
    &=\frac{1}{kd-i+1}\sum_{s \in [kd]\setminus \{\sigma'(j):1\leq j<i\}} \mathbb E[X_i|\sigma'(1),...,\sigma'(i-1),\sigma'(i)=s]
\end{align*}
As a result, it suffices to show that the difference between the minimal and the maximal term in the latter sum is bounded from above by $c$.
To this end, fix any $\sigma'(1),\ldots,\sigma'(i-1)\subseteq [kd]$ as well as two elements $s',s''\in [kd]\setminus \{\sigma'(j):1\leq j<i\}$. Denote by $\cS'$ (resp. $\cS''$) the set of sequences in $\cS_{kd}(1)$ which start with $\sigma(1),\ldots,\sigma(i-1)$ and $\sigma(i)=s'$ (resp. $\sigma(i)=s''$). Then, the map $\phi: \cS'\to \cS''$ which takes a string $\sigma'\in\cS'$ and outputs the string obtained after switching the occurrences of $s'$ and $s''$ is a bijection. 
Further, for every $\sigma'\in \cS'$, we have that $\psi(\sigma')$ and $\psi(\phi(\sigma'))$ differ by a single switching. Thus,  $|f(\psi(\sigma'))-f(\psi(\phi(\sigma'))|\le c$ for every $\sigma'\in \cS'$.  
As a consequence, for every $i\in [kd]$ and every choice of $\sigma'(1),\ldots,\sigma'(i-1),s',s''$ as described, we have that $$|\mathbb E[X_i|\sigma'(1),...,\sigma'(i-1),\sigma'(i)=s']-\mathbb E[X_i|\sigma'(1),...,\sigma'(i-1),\sigma'(i)=s'']|\le c.$$
In turn, this implies~\eqref{eq:lip} and finishes the proof.
\end{proof}

\subsection{Auxiliary graph-theoretic results} The following decomposition lemma appears as Lemma~2.1 in \cite{ADILS25}.
For graphs $H\subseteq G$ and a function $w:V(G)\to \mathbb R$, we write $w(H)=\sum_{v\in V(H)} w(v)$.

\begin{lemma}\label{l: decomps}
Fix $\Delta \ge 2$, $m_0>0$ and set $m \coloneqq(\Delta+1)m_0$. 
Consider a tree $T$ of maximum degree at most $\Delta$ equipped with a weight function $w:V(T)\to (0,m_0]$ satisfying $w(T)\ge m_0$. Then, there exist vertex-disjoint trees $T_1,\ldots, T_k$ such that $V(T)=\bigsqcup_{i\in[k]} V(T_i)$ and, for every $i\in [k]$, $w(T_i)\in [m_0, m]$. 
\end{lemma}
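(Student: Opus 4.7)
The plan is to prove the lemma by a single post-order sweep through $T$ that greedily carves off pieces as soon as they first become heavy enough.

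Root $T$ at an arbitrary vertex $r$ and process vertices in post-order (children before parents). I would maintain the invariant that, immediately after processing a vertex $v$, either $v$ has already been placed inside a completed piece, or there is a \emph{pending} connected subtree $S_v$ containing $v$ and only yet-unassigned descendants of $v$, with $w(S_v)<m_0$. To process $v$, set
\[S_v\;:=\;\{v\}\cup \bigcup_{c}S_c,\]
where $c$ ranges over the children of $v$ that are still pending after their own processing, and declare $S_v$ a completed piece if and only if $w(S_v)\ge m_0$. If $v=r$ and $w(S_r)<m_0$, the hypothesis $w(T)\ge m_0$ forces some piece $P$ to have been cut off earlier; since $T$ is connected, at least one such $P$ must share a tree edge with $S_r$, and in that case I would merge $S_r$ into $P$.

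For the weight bounds, the lower bound $w(T_i)\ge m_0$ is immediate since a piece is only declared (or enlarged by a merge) once a pending subtree reaches total weight at least $m_0$. For the upper bound, the key observation is that a non-root vertex $v$ has at most $\Delta-1$ children (one of its at most $\Delta$ neighbours being the parent), so any piece cut off at a non-root $v$ satisfies
\[w(S_v)\;\le\;w(v)+\sum_{c}w(S_c)\;<\;m_0+(\Delta-1)m_0\;=\;\Delta m_0.\]
At the root, a piece of the form $S_r$ has weight at most $m_0+\Delta m_0=(\Delta+1)m_0=m$, while after a merge the enlarged piece has weight at least $w(P)\ge m_0$ and strictly less than $\Delta m_0+m_0=m$, so it lies in $[m_0,m]$ as required.

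The main obstacle is precisely this merge step: a crude bound would give $w(P)+w(S_r)\le m+m_0>m$, violating the cap $m=(\Delta+1)m_0$. What saves the argument is the extra $m_0$ of slack between $\Delta m_0$ (the genuine cap on non-root cutoffs) and $(\Delta+1)m_0$ (the allowed maximum), which is exactly enough to absorb a leftover $S_r$ of weight $<m_0$. With this in hand, every vertex is assigned to exactly one piece by the end of the sweep, and every piece is a connected subtree of $T$ (either some $S_v$, or $S_r\cup P$ joined by a single tree edge), yielding the desired decomposition $V(T)=\bigsqcup_{i\in[k]}V(T_i)$ with $w(T_i)\in[m_0,m]$.
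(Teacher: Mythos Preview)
The paper does not prove this lemma; it simply cites it as Lemma~2.1 of~\cite{ADILS25}. Your argument is correct and is the standard greedy post-order approach to such tree partitions: the invariant that every pending piece has weight strictly below $m_0$ gives the upper bound $\Delta m_0$ on any piece cut at a non-root vertex, and the extra $m_0$ of slack in the cap $m=(\Delta+1)m_0$ is precisely what is needed to absorb a light leftover $S_r$ into an adjacent earlier piece. The only point worth making explicit is why the adjacent piece $P$ satisfies $w(P)<\Delta m_0$ rather than merely $w(P)\le m$: this holds because every piece created before the root is processed was necessarily cut at a non-root vertex, hence obeys the sharper bound.
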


Next, we state an estimate on the number of subtrees of a $d$-regular graph with fixed order and rooted in a particular vertex (see, e.g., \cite[Lemma 2]{BFM98} and \cite[Chapter 7]{K98}).
\begin{lemma}\label{l: trees}
Fix a $d$-regular graph $G$, a vertex $v$ therein and an integer $k\in [d]$.
Then, the number $t(v,k)$ of $k$-vertex subtrees of $G$ rooted in $v$ satisfies
\begin{align*}
\frac{k^{k-2}(d-k)^{k-1}}{(k-1)!}\le t(v,k) \le \frac{k^{k-2} d^{k-1}}{(k-1)!}\le (\e d)^{k-1}.
\end{align*}
\end{lemma}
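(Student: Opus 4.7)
The plan is to prove both bounds via a single double-counting argument relating $k$-vertex subtrees of $G$ rooted at $v$ to embeddings of labeled rooted trees on $[k]$. I would let $N$ count pairs $(S,\sigma)$, where $S$ is a $k$-vertex subtree of $G$ containing $v$ and $\sigma:V(S)\setminus\{v\}\to\{2,\ldots,k\}$ is a bijection. Since each rooted subtree admits exactly $(k-1)!$ such labelings, $N=t(v,k)(k-1)!$. On the other hand, each pair $(S,\sigma)$ corresponds uniquely to a pair $(T,\phi)$, where $T$ is a labeled rooted tree on $[k]$ with root $1$ and $\phi:[k]\to V(G)$ is an injection sending $1\mapsto v$ and each edge of $T$ to an edge of $G$. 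By Cayley's formula, the number of such $T$ is $k^{k-2}$, so it suffices to bound the number of valid embeddings $\phi$ for a fixed $T$ from both sides.

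To that end, I would process the vertices of $[k]$ in a parent-first BFS order $1,2,\ldots,k$, so that every $i\ge 2$ is visited after its parent $p(i)$ in $T$. When placing the image of $i$, the candidate $\phi(i)$ must be a neighbor of $\phi(p(i))$ distinct from $\phi(1),\ldots,\phi(i-1)$. Since $G$ is $d$-regular, there are at most $d$ such neighbors and at most $i-1\le k-1$ of them are already used, so the number of valid choices lies between $d-k+1$ and $d$. Multiplying over $i=2,\ldots,k$ yields between $(d-k+1)^{k-1}\ge (d-k)^{k-1}$ and $d^{k-1}$ valid embeddings per labeled rooted tree $T$. Combining with Cayley's count gives
\[k^{k-2}(d-k)^{k-1}\;\le\; t(v,k)(k-1)!\;\le\; k^{k-2}d^{k-1},\]
which rearranges to the first two inequalities in the lemma. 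The final bound $k^{k-2}d^{k-1}/(k-1)!\le(\e d)^{k-1}$ then follows from a routine Stirling simplification of $k^{k-2}/(k-1)!$.

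I do not anticipate a genuine obstacle: the argument is elementary once Cayley's formula is in hand. The only care needed is to check consistency between labeled and unlabeled objects (the factor $(k-1)!$), and to verify that the parent-first traversal delivers both bounds uniformly; the range restriction $k\in[d]$ in the statement is precisely what keeps the lower bound nontrivial, since for $k$ near $d$ the injectivity constraint becomes binding.
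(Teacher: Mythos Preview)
Your argument is correct and is the standard proof of this folklore estimate. The paper does not prove the lemma at all: it is quoted with references to \cite{BFM98} and \cite{K98}, so there is no ``paper's own proof'' to compare against. Your double-counting via Cayley's formula and a parent-first embedding is exactly the classical route those references take; the only cosmetic point is that the BFS order you invoke depends on $T$ and is not literally the order $1,2,\ldots,k$, but this is just relabelling and does not affect the bounds.
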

We utilise the above lemma to estimate the number of forests whose roots lie in a fixed set of vertices. 
\begin{corollary}\label{cor:forests}
Fix an $n$-vertex $d$-regular graph $G$ and a set of vertices $U\subseteq V(G)$ of size $\ell\ge 1$. 
Then, the number of forests $F\subseteq G$ on $k$ edges where $U\subseteq V(F)$ and no connected component of $F$ is disjoint from $U$ is bounded from above by $(\ell+k)^k d^k/k!$\,.
\end{corollary}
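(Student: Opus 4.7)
The plan is to construct an injection $\Phi$ from the set of admissible forests to $k$-element subsets of $[\ell+k]\times[d]$; then
\[
\#\{F\}\ \leq\ \binom{(\ell+k)\,d}{k}\ \leq\ \frac{((\ell+k)\,d)^k}{k!}\ =\ \frac{(\ell+k)^k\, d^k}{k!},
\]
which is the claimed bound.

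Fix once and for all a canonical ordering $u_1,\ldots,u_\ell$ of $U$ and an enumeration of the $d$ neighbors of each vertex of $G$. Given an admissible forest $F$, I label the vertices of $V(F)$ by a multi-source BFS from $U$: each $u_i$ receives label $i$, and whenever a vertex $w$ is dequeued, we scan its $G$-neighbors in the fixed order and, for each $x$ that is an $F$-neighbor of $w$ and is not yet labeled, assign $x$ the next free label and enqueue it. Because $U\subseteq V(F)$ and every component of $F$ meets $U$, every vertex of $V(F)$ gets labeled, and since $|V(F)|=k+s\leq k+\ell$, all labels lie in $[\ell+k]$.

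Next, encode each edge $e=\{a,b\}\in F$ by the pair $(v(e),c(e))$, where $v(e)=\min\{\mathrm{lab}(a),\mathrm{lab}(b)\}$ and $c(e)$ is the position of the other endpoint among the $G$-neighbors of the vertex carrying label $v(e)$; then set $\Phi(F):=\{(v(e),c(e)):e\in F\}\subseteq[\ell+k]\times[d]$. Two distinct edges of $F$ produce distinct codes because an edge is determined by its smaller-label endpoint together with the position of the other endpoint in that vertex's neighborhood; hence $|\Phi(F)|=k$.

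To establish injectivity of $\Phi$, I give a deterministic reconstruction of $F$ from $\Phi(F)$: sort the codes by increasing $v$, breaking ties by $c$; starting with $\mathrm{lab}(u_i)=i$, for each code $(v,c)$ locate the vertex $w$ with label $v$, take its $c$-th $G$-neighbor $w'$, assign $w'$ the next free label if it is unlabeled, and record the edge $\{w,w'\}$. The main obstacle — and the crux of the argument — is to verify that at the moment the code $(v,c)$ is read, the label $v$ has already been assigned: for $v>\ell$, the vertex carrying label $v$ was introduced in canonical BFS by a tree edge whose code has first coordinate equal to the BFS-parent's label, which is strictly smaller than $v$; hence that code is processed earlier in the sorted order and assigns label $v$ in time. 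The reconstruction therefore returns $F$ together with its canonical labeling, proving injectivity and the desired bound.
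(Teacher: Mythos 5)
Your proof is correct, but it takes a genuinely different route from the paper's. The paper's argument applies \Cref{l: trees} (the count of rooted subtrees in a $d$-regular graph) tree-by-tree after assigning a root in $U$ to each component of $F$, and then evaluates the resulting $\ell$-fold convolution via a generating-function identity from~\cite{CGHJK96}, obtaining the slightly sharper bound $\ell(\ell+k)^{k-1}d^k/k!$. You instead construct a direct injection $\Phi$ from admissible forests into $k$-element subsets of $[\ell+k]\times[d]$ via a canonical multi-source BFS labelling and edge codes $(v(e),c(e))$, and conclude with $\binom{(\ell+k)d}{k}\le (\ell+k)^k d^k/k!$. This is a self-contained compression/encoding argument that sidesteps both \Cref{l: trees} and the cited identity, at the cost of a marginally weaker (but equally sufficient) constant. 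One small remark on the write-up: in verifying injectivity you only argue that the parent's code is processed earlier and \emph{``assigns label $v$ in time''}; as written this shows that \emph{some} label is assigned to the relevant vertex before $(v,c)$ is read, but not yet that it is precisely label $v$. To close this, one adds a short induction along the sorted code list: the canonical BFS and the reconstruction process the same sequence of (vertex label, neighbour-position) pairs in the same lexicographic order, and at each step both the decision of whether a new label is created and the value of the next free label are determined by the common current state, so the two labellings agree throughout. With that observation spelled out the argument is complete and correct.
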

\begin{proof}
For every forest $F$ as described, one can assign roots in $U$ to the trees in $F$ as follows: every tree $T$ of $F$ is assigned a single root in $U$ and all vertices in $V(T)\cap U$ except for the root (if any such exist) are considered to be roots of the empty tree on zero vertices.
By combining the latter procedure and \Cref{l: trees}, we obtain that the desired number of forests is at most
\begin{equation}\label{eq:forests_count}
\bigg(\sum_{x_1+\ldots+x_\ell=k;\, x_1,\ldots,x_\ell\ge 0}\;\;
\prod_{i=1}^{\ell} \frac{(x_i+1)^{x_i-1}}{x_i!}\bigg) d^k,
\end{equation}
where we stress that $x_i$ is the number of \textit{edges} in the $i$-th tree.

To analyse the latter expression, consider the power series
\[f(z) = \sum_{j=0}^{\infty} \frac{(j+1)^{j-1}}{j!} z^j.\]
On the one hand,~\eqref{eq:forests_count} is obtained by multiplying $d^k$ and the coefficient of $(f(z))^{\ell}$ in front of $z^k$. 
On the other hand, by~\cite[(2.36)]{CGHJK96},
\[(f(z))^{\ell} = \sum_{j=0}^{\infty} \frac{\ell(\ell+j)^{j-1}}{j!} z^j.\]
Taking the coefficient in front of $z^k$ and using~\eqref{eq:forests_count} finishes the proof.
\end{proof}

Next, we state the celebrated Harper's edge-isoperimetric inequality for hypercubes, see~\cite{H64} and also \cite{B67,H76,L64}.
\begin{lemma}\label{l: Harper}
For every integer $d \ge 1$ and for every set $S\subseteq V(Q^d)$,
\begin{align*}
e(S,V(Q^d)\setminus S)\ge |S|\left(d-\log_2|S|\right).
\end{align*}
\end{lemma}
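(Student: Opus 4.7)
The plan is to reformulate the inequality as an upper bound on the number of internal edges of $S$, then induct on $d$. Since $Q^d$ is $d$-regular, double counting gives $d|S| = 2\,e_{Q^d}(S) + e(S, V(Q^d)\setminus S)$, so the desired inequality is equivalent to
\[
e_{Q^d}(S) \le \tfrac{1}{2}|S|\log_2|S|,
\]
with the convention $0 \log_2 0 = 0$. This reformulation is the key observation, as the right-hand side splits cleanly along coordinate cuts.

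I would prove the reformulated bound by induction on $d$, with the cases $d \in \{0,1\}$ being immediate. For the inductive step, split $Q^d$ along the $d$-th coordinate into two copies of $Q^{d-1}$, and let $S_0, S_1 \subseteq V(Q^{d-1})$ be the two slices of $S$, with $a \coloneqq |S_0|$ and $b \coloneqq |S_1|$ (assume WLOG $a \ge b$). Decomposing internal edges by direction,
\[
e_{Q^d}(S) = e_{Q^{d-1}}(S_0) + e_{Q^{d-1}}(S_1) + |S_0 \cap S_1|,
\]
since the $d$-direction edges inside $S$ correspond precisely to vertices of $Q^{d-1}$ lying in both $S_0$ and $S_1$. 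Applying the inductive hypothesis to $S_0, S_1$ and using $|S_0 \cap S_1| \le b$, it suffices to verify
\[
\tfrac{a}{2}\log_2 a + \tfrac{b}{2}\log_2 b + b \le \tfrac{a+b}{2}\log_2(a+b).
\]
Setting $t = b/(a+b) \in (0, 1/2]$ and dividing by $(a+b)/2$, this rearranges to the elementary inequality $\mathbf{h}(t) \ge 2t$, where $\mathbf{h}(t) = -t\log_2 t - (1-t)\log_2(1-t)$ is the binary entropy function.

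The only substantive step remaining is the inequality $\mathbf{h}(t) \ge 2t$ on $[0,1/2]$: the function $\mathbf{h}(t) - 2t$ is concave (since $\mathbf{h}$ is), vanishes at both endpoints $t = 0$ and $t = 1/2$, and has derivative $+\infty$ at $0^+$, so it stays nonnegative throughout. I do not expect any serious obstacle here: the crucial insight is the reduction to an upper bound on $e_{Q^d}(S)$, after which the induction is essentially mechanical. As a sanity check, equality in $\mathbf h(t) \ge 2t$ occurs exactly at $t\in\{0,1/2\}$, which propagates through the induction to yield the tight cases of Harper's bound at subcubes, in agreement with the known extremal configurations.
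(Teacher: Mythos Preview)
Your proof is correct. The reformulation to $e_{Q^d}(S)\le \tfrac12|S|\log_2|S|$ via double counting is exactly right, the coordinate-split induction is clean, and the reduction to $\mathbf h(t)\ge 2t$ on $[0,1/2]$ is valid (concavity of $\mathbf h(t)-2t$ together with vanishing at the endpoints suffices).

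Note that the paper does not actually prove this lemma: it is stated as the classical Harper edge-isoperimetric inequality with references to \cite{H64,B67,H76,L64}. Your argument is one of the standard proofs of this bound (essentially the entropy-compression approach), so there is nothing to compare --- you have supplied a self-contained proof where the paper simply cites the literature.
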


\subsection{Auxiliary results on percolation}
We start this section by stating the well-known Harris' inequality, also known as the FKG inequality, see \cite[Theorem 6.3.3]{AS16}.
For a finite set $\Lambda$, we say that a family $\cA$ of subsets of $\Lambda$ is \emph{increasing} if, for all subsets $A\subseteq B$ of $\Lambda$ with $A\in \cA$, we also have that $B\in \cA$.
Denote by $\Lambda_p$ a random subset of $\Lambda$ containing every element independently and with probability $p$.

\begin{lemma}[Harris' inequality]\label{lem:Harris}
For every $p\in [0,1]$ and increasing subsets $\cA,\cB$, $$\mathbb P(\Lambda_p\in \cA\cap \cB)\ge \mathbb P(\Lambda_p\in \cA)\mathbb P(\Lambda_p\in \cB).$$
\end{lemma}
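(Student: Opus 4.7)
The plan is to prove Harris' inequality by induction on $n\coloneqq |\Lambda|$, after reformulating it in terms of monotone functions. Identifying each subset $S\subseteq \Lambda$ with its indicator vector in $\{0,1\}^{n}$, an increasing family $\cA\subseteq 2^{\Lambda}$ corresponds bijectively to a coordinate-wise non-decreasing function $\mathbf{1}_{\cA}:\{0,1\}^n\to \{0,1\}$. Writing $X=(X_1,\ldots,X_n)$ for a vector of independent Bernoulli$(p)$ coordinates, it therefore suffices to prove the functional version: for any two coordinate-wise non-decreasing $f,g:\{0,1\}^n\to \mathbb{R}_{\ge 0}$, one has $\mathbb E[f(X)g(X)]\ge \mathbb E[f(X)]\,\mathbb E[g(X)]$. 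Applied to $f=\mathbf{1}_{\cA}$ and $g=\mathbf{1}_{\cB}$, this recovers the stated inequality.

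For the base case $n=1$, let $a,b$ be the values of $f$ at $1$ and $0$, and similarly $c,d$ for $g$. Monotonicity gives $a\ge b$ and $c\ge d$, and a direct expansion yields
\[pac+(1-p)bd-(pa+(1-p)b)(pc+(1-p)d)=p(1-p)(a-b)(c-d)\ge 0.\]
For the inductive step, I would condition on the last coordinate: for each fixed $\epsilon\in\{0,1\}$, the restrictions of $f$ and $g$ to $\{0,1\}^{n-1}$ are still monotone, so the induction hypothesis applied to the conditional distribution gives
\[\mathbb E[f(X)g(X)\mid X_n=\epsilon]\ge \mathbb E[f(X)\mid X_n=\epsilon]\,\mathbb E[g(X)\mid X_n=\epsilon].\]
Taking a weighted average of these two inequalities with weights $p$ and $1-p$, and then applying the one-dimensional identity above to the four conditional expectations (noting that $\mathbb E[f(X)\mid X_n=1]\ge \mathbb E[f(X)\mid X_n=0]$, and similarly for $g$, again by monotonicity), closes the induction.

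There is no substantial obstacle: the argument is entirely routine once the correct reformulation is in place. The one conceptual point worth flagging is that a naive coupling of $\cA$ and $\cB$ cannot work since these events are highly correlated; the induction on the dimension of the product space sidesteps this precisely because monotonicity is preserved under coordinate-wise conditioning, which is exactly the property needed to combine the conditional inductive bounds via the one-dimensional identity.
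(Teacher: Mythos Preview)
Your argument is correct: this is the classical inductive proof of Harris' inequality via the functional (covariance) form for coordinate-wise non-decreasing functions, and each step is sound. The paper does not actually give a proof of this lemma; it merely states it as a well-known fact with a reference to Alon--Spencer, so there is no discrepancy to discuss---your proof is precisely the standard one such a reference points to.
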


Next, we present several lemmas on percolated hypercubes. The first one deals with matchings in $Q^d_p$ and appears as \cite[Lemma 2.9]{EKK22}.
\begin{lemma}\label{l: matchings}
Fix $\delta>0$, let $p=p(d)=\delta/d$ and $t\in [d2^d]$. 
There exists a constant $\alpha\coloneqq\alpha(\delta)>0$ such that, for every set $F\subseteq E(Q^d)$ of size $|F|\ge t$, $F\cap Q^d_p$ contains a matching of size at least $\alpha t/d$ with probability at least $1-\exp(-\alpha t/d)$. 
\end{lemma}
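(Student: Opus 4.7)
The plan is to apply the bounded-difference inequality (\Cref{azuma}) to $\nu(F\cap Q^d_p)$, viewed as a function of the $|F|$ independent Bernoulli$(p)$ variables $X_e := \1[e\in Q^d_p]$ indexed by $e\in F$. First I would note that $\nu(F\cap Q^d_p)$ is $1$-Lipschitz in each coordinate, since adding or removing a single edge changes the maximum matching size by at most one. Thus \Cref{azuma} applies with $C=1$ and $m = |F|$.

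The key quantitative input is a lower bound of the form $\mathbb{E}[\nu(F\cap Q^d_p)] \geq c_1(\delta)\,|F|/d$ for a suitable $c_1 = c_1(\delta) > 0$. To obtain it, I would count \emph{isolated} edges: call $e = uv\in F$ isolated if $e \in Q^d_p$ and no other $F$-edge incident to $u$ or $v$ lies in $Q^d_p$. Isolated edges are pairwise vertex-disjoint — if $e_1,e_2$ shared a vertex $w$, each would forbid the other from surviving — so they form a matching in $F\cap Q^d_p$. Since $e$ has at most $2(d-1)$ other $F$-edges at its endpoints, independence of the $X_f$'s yields
\[\mathbb{P}[e\text{ is isolated}] \;\ge\; p(1-p)^{2(d-1)} \;\ge\; \frac{\delta\, e^{-2\delta}}{2d}\]
for $d$ large, and summing over $e\in F$ gives $\mathbb{E}[\nu(F\cap Q^d_p)] \geq c_1 |F|/d \geq c_1 t/d$ with $c_1 = \delta e^{-2\delta}/2$.

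With these two ingredients in hand, \Cref{azuma} applied with deviation $t':= \mathbb{E}[\nu]/2 \geq c_1|F|/(2d)$ gives a failure probability at most $2\exp(-t'^2/(2|F|p + 2t'/3))$. The denominator is bounded by $(2\delta + c_1/3)|F|/d$, so the exponent is at least $c_1^2 |F|/(4d(2\delta + c_1/3)) \geq c_2 t/d$ for some $c_2 = c_2(\delta) > 0$. Taking $\alpha = \alpha(\delta) > 0$ small enough to satisfy $\alpha \leq c_1/2$ and to absorb the leading factor of $2$ from \Cref{azuma}, one concludes $\mathbb{P}[\nu(F\cap Q^d_p) < \alpha t/d] \leq \exp(-\alpha t/d)$, as required. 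When $t$ is so small that $\alpha t/d < 1$, the statement is vacuous because $\nu\ge 0$ always.

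The main obstacle, and where naive approaches fail, is achieving the \emph{correct} order $|F|/d$ for $\mathbb{E}[\nu]$. A direct approach via König's theorem finds a matching inside $F$ of size $\geq |F|/d$ and restricts it to $Q^d_p$, yielding only $\mathbb{E}[\nu] \geq p|F|/d = \Theta_\delta(|F|/d^2)$, which is off by a factor of $d$ and would produce a much weaker probability tail. Counting isolated edges instead sums over all $|F|$ edges of $F$; under $p=\delta/d$, each edge's $O(d)$ competitors at its endpoints all fail to survive with constant probability $e^{-\Theta(\delta)}$, and this constant is exactly what recovers the missing factor of $d$.
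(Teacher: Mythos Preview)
The paper does not give a proof of this lemma; it is quoted as \cite[Lemma~2.9]{EKK22} and used as a black box. So there is no in-paper argument to compare against, and your proof stands on its own.

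Your approach is correct: the isolated-edge count gives $\mathbb{E}[\nu(F\cap Q^d_p)]\ge c_1|F|/d$, the $1$-Lipschitz property of the matching number is immediate, and \Cref{azuma} with $m=|F|$, $C=1$ delivers the exponential tail. Substituting the lower bound $t'\ge c_1|F|/(2d)$ into both numerator and denominator of the exponent is legitimate because $x\mapsto x^2/(a+bx)$ is increasing on $(0,\infty)$.

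One small slip in the last sentence: when $0<\alpha t/d<1$ the statement is \emph{not} vacuous, since ``matching of size at least $\alpha t/d$'' then means a matching of size at least $1$, which does not follow from $\nu\ge 0$. More to the point, absorbing the prefactor $2$ from \Cref{azuma} into the exponent requires $t/d$ to exceed some constant threshold. The fix is routine: for $t/d\le C_0$ (a constant), take $\alpha$ small enough that $\alpha C_0<1$ and observe directly that
\[
\mathbb{P}[F\cap Q^d_p\neq\varnothing]=1-(1-p)^{|F|}\ge 1-\e^{-p|F|}\ge 1-\e^{-\delta t/d}\ge 1-\e^{-\alpha t/d}
\]
once $\alpha\le\delta$. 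Choosing $\alpha$ below all the relevant constants then covers every regime.
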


We also make use of some expansion properties of connected subsets in the giant component established more generally in \cite[Theorem~4]{DEKK24}, see also the remark in the beginning of page 747 therein.

\begin{thm}\label{thm: old expansion}
Fix $c>1$, let $p=p(d)=c/d$ and recall the giant component $L_1$ in $Q^d_p$. 
Then, there are constants $\eps_1\in (0,1),\eps_2,K>0$ (all depending only on $c$) such that \textbf{whp}, for all subsets $S\subseteq V(L_1)$ satisfying that $Q^d_p[S]$ is connected:
\begin{enumerate}[label=\upshape(\alph*)]
    \item if $|S| \in [Kd, n^{\eps_1}]$, then
    $$|N_{Q^d_p}(S)|\ge \eps_2|S|;$$
    \item if $|S| \in [n^{\eps_1}, 2v(L_1)/3]$, then
    $$e_{Q^d_p}(S, V(L_1)\setminus S)\ge \frac{\eps_2|S|\log_2(n/|S|)}{d\log d}.$$
\end{enumerate}
\end{thm}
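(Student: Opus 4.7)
The plan is to handle the two size regimes (a) and (b) separately, both via union bounds over rooted spanning trees of $S$ in $Q^d_p$, whose number is at most $2^d(\e d)^{k-1}$ by \Cref{l: trees}.

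For part (a), I would fix $k\in[Kd,n^{\varepsilon_1}]$ with $K=K(c)$ large and $\varepsilon_1<1$ small. A connected $S$ violating the conclusion is witnessed by a set $T\subseteq V(Q^d)$ of size at most $\varepsilon_2 k$ containing every $Q^d_p$-neighbour of $S$; there are at most $\binom{2^d}{\varepsilon_2 k}\le(\e\cdot 2^d/(\varepsilon_2 k))^{\varepsilon_2 k}$ choices for $T$. A specific spanning tree of $S$ lies in $Q^d_p$ with probability $p^{k-1}$, while Harper's inequality (\Cref{l: Harper}) combined with $\log_2 k\le \varepsilon_1 d$ yields at least $kd(1-\varepsilon_1-\varepsilon_2)$ edges of $Q^d$ from $S$ to $V(Q^d)\setminus(S\cup T)$, every one of which must be absent from $Q^d_p$ --- a factor $\exp(-ck(1-\varepsilon_1-\varepsilon_2))$. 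After multiplying the four factors, taking $\varepsilon_2=\varepsilon_2(c)$ small enough and $K=K(c)$ large enough, the $d\ln 2$ cost from the $2^d$ root choice and the $\varepsilon_2 kd\ln 2$ cost from the choice of $T$ are dominated by $ck(1-\varepsilon_1-\varepsilon_2)$, yielding a per-$k$ bound $\exp(-\Omega_c(k))$ whose geometric sum over $k\ge Kd$ is $o(1)$.

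For part (b), the spanning-tree enumeration grows too fast when $|S|$ is nearly linear, so I would turn to sprinkling. Writing $Q^d_p=Q^d_{p_1}\cup Q^d_{p_2}$ as independent percolations with $p_1=(c+1)/(2d)$ and $(1-p_1)(1-p_2)=1-p$, the giant $L_1^{(1)}$ of $Q^d_{p_1}$ already covers all but an $o(1)$-fraction of $V(L_1)$ \whp. Suppose a connected $S$ in $Q^d_p$ with $|S|\in[n^{\varepsilon_1},2v(L_1)/3]$ violates the edge-expansion. Harper's inequality supplies $e_{Q^d}(S,V(Q^d)\setminus S)\ge k\log_2(n/k)$ external edges in the host, each appearing in $Q^d_{p_2}$ independently with probability $p_2=\Theta(1/d)$. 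Conditional on $Q^d_{p_1}$, the number of such edges that appear in $Q^d_{p_2}$ and land in $V(L_1)$ has expectation $\Omega(k\log_2(n/k)/d)$, and a Chernoff bound (\Cref{chernoff}) gives that it falls below $\varepsilon_2 k\log_2(n/k)/(d\log d)$ with probability at most $\exp(-\Omega(k\log_2(n/k)/d))$. Enumerating a rooted spanning tree of $S$ inside $Q^d_{p_1}$ via \Cref{l: trees} costs only $\exp(O(k))$, which is dominated by the above failure probability precisely because the $1/\log d$ slack in the target allows a constant-factor deviation from the mean.

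The principal obstacle, and the source of the stray $\log d$ in the denominator of (b), is that a positive fraction of the external $Q^d$-edges of $S$ which do survive in $Q^d_p$ may land outside $L_1$, in small components attached to $S$. For the weaker statement of \Cref{thm: old expansion}, absorbing this loss into the $\log d$ slack is harmless. Sharpening the bound to an honest $\Omega(1/d)$ expansion, as achieved by \Cref{th: expansion} of the present paper, instead requires showing that almost every vertex outside $L_1^{(1)}$ has many $Q^d$-neighbours inside it --- the well-spreadness and stability arguments embodied in \Cref{l: no surprises from outside_gen} and \Cref{l: large components are well spread} --- and this is the step I would expect to consume the bulk of the technical work if one aimed beyond \Cref{thm: old expansion}.
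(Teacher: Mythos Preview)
The paper does not prove this statement itself; it is quoted verbatim from \cite[Theorem~4]{DEKK24}. Your proposal, however, has genuine gaps in both parts.

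For part~(a), the arithmetic does not close. You write that ``the $\varepsilon_2 kd\ln 2$ cost from the choice of $T$ [is] dominated by $ck(1-\varepsilon_1-\varepsilon_2)$'', but the former is linear in $d$ and the latter is not, so no positive constant $\varepsilon_2$ can make this true. Even if you restrict $T$ to lie in $N_{Q^d}(S)$ and replace $\binom{2^d}{\varepsilon_2 k}$ by $\binom{kd}{\varepsilon_2 k}$, the cost is still of order $\varepsilon_2 k\log d$, which again swamps $ck$ for constant $\varepsilon_2$. A naked first-moment argument over $(S,T)$ of this shape cannot yield constant vertex expansion; one needs either to enumerate $S\cup N_{Q^d_p}(S)$ as a single connected object (which still only works for $c$ above an absolute threshold) or, as in \cite{DEKK24}, to go through sprinkling.

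For part~(b), there are two independent problems. First, you propose to ``enumerate a rooted spanning tree of $S$ inside $Q^d_{p_1}$'', but $S$ is only assumed connected in $Q^d_p=Q^d_{p_1}\cup Q^d_{p_2}$ and need not be connected in $Q^d_{p_1}$ at all, so this step is undefined. Second, and more seriously, even if you enumerate spanning trees of $S$ in $Q^d_p$ (cost $2^d(\e c)^{k-1}=\exp(\Theta(k))$), the Chernoff failure probability for the boundary is only $\exp(-\Theta(k\log_2(n/k)/d))$. When $k$ is linear in $n$ one has $\log_2(n/k)=O(1)$, and the enumeration overwhelms the deviation bound by a factor $\exp(\Theta(k))$. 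The $1/\log d$ slack in the target does not help here: lowering the Chernoff threshold from $\mu/2$ to $\mu/\log d$ still yields a bound of the form $\exp(-\Theta(\mu))$, not anything stronger. Finally, your closing diagnosis is a red herring: since $S\subseteq V(L_1)$, every $Q^d_p$-edge leaving $S$ necessarily lands in $V(L_1)$, so $e_{Q^d_p}(S,V(L_1)\setminus S)=e_{Q^d_p}(S,V(Q^d)\setminus S)$ and nothing is lost to small components. The real obstruction is the enumeration-versus-deviation trade-off for linear-sized $S$, and it is precisely this that forces \cite{DEKK24} to argue via component-respecting partitions of the large $G_1$-components (few enough to union-bound over) together with the matching lemma, rather than via spanning trees of $S$.
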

We note that we will extend and improve \Cref{thm: old expansion}(b) in \Cref{s: mixing time}, see \Cref{cor: new expansion}.

We end this section with a `sparsification' lemma which will be key in our subsequent analysis.

\begin{lemma}\label{l: sparsification}
Fix a family $\cC$ of vertex-disjoint sets in $Q^d$ where $v(\cC)\ge d^{50}$ and each set in $\cC$ has size at most $d^{10}$.
Then, for any sufficiently small constant $\eps>0$, there is a family $\cC'\subseteq \cC$ satisfying that
\begin{align*}
    v(\cC')\ge \eps v(\cC)\quad \text{ and }\quad e_{\mathrm{out}}(\cC')\ge (1-2\eps) dv(\cC').
\end{align*}
\end{lemma}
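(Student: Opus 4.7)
The plan is to construct $\cC'$ probabilistically: include each $C\in\cC$ independently with probability $q\coloneqq(1+\delta)\eps$, where $\delta=\delta(\eps)\in(0,1)$ is a small positive constant (for concreteness, $\delta=\eps$), and show by a first and second moment argument that both target properties hold simultaneously with positive probability. A useful reformulation of the goal: since every vertex of $Q^d$ has degree $d$,
\[dv(\cC')=2Z+e_{\mathrm{out}}(\cC'),\qquad\text{where}\qquad Z\coloneqq\sum_{C\in\cC'}e(Q^d[C])+e_{\mathrm{in}}(\cC'),\]
so the condition $e_{\mathrm{out}}(\cC')\ge(1-2\eps)dv(\cC')$ is equivalent to $Z\le\eps d\,v(\cC')$.

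Two deterministic bounds on $\cC$ drive the computation. First, by Harper's inequality (Lemma~\ref{l: Harper}) applied to each $C\in\cC$, $e(Q^d[C])\le \tfrac{1}{2}|C|\log_2|C|\le 5|C|\log_2 d$, so $\sum_{C\in\cC}e(Q^d[C])\le 5v(\cC)\log_2 d$, which is $o(\eps dv(\cC))$ for $d$ large in terms of $\eps$. Second, trivially $e_{\mathrm{in}}(\cC)\le dv(\cC)/2$. Taking expectations,
\[\mathbb{E}[v(\cC')]=qv(\cC)\quad\text{and}\quad \mathbb{E}[Z]\le 5qv(\cC)\log_2 d+\tfrac{q^2}{2}dv(\cC)\le(1+o(1))\tfrac{q^2}{2}dv(\cC),\]
while the mean-level target is $\eps d\,\mathbb{E}[v(\cC')]=\eps qdv(\cC)$. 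Since $q/2=(1+\delta)\eps/2<\eps$, the expectation of $Z$ is a constant factor below its target, leaving room for fluctuations.

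For concentration, note that $v(\cC')=\sum_C|C|\mathds{1}_{C\in\cC'}$ is a sum of independent random variables each bounded by $d^{10}$, so Chebyshev yields $\mathrm{Var}(v(\cC'))\le d^{10}\mathbb{E}[v(\cC')]$. Flipping the indicator $\mathds{1}_{C\in\cC'}$ alters $Z$ by at most $e(Q^d[C])+e(C,V(\cC)\setminus C)\le d|C|\le d^{11}$, so by the bounded difference inequality (Lemma~\ref{azuma}) the standard deviation of $Z$ is at most $O(d^{11}\sqrt{|\cC|\,q})\le O(d^{11}\sqrt{v(\cC)})$. Under the hypothesis $v(\cC)\ge d^{50}$, each standard deviation is a $d^{-\Omega(1)}$-fraction of the respective mean; hence with probability $1-o(1)$ one simultaneously has $v(\cC')\ge(1-o(1))qv(\cC)\ge\eps v(\cC)$ and $Z\le(1+o(1))\mathbb{E}[Z]\le\eps dv(\cC')$, producing the desired $\cC'$.

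The main delicate point is the calibration of $q$: the choice $q=\eps$ matches the size condition on the nose, while $q=2\eps$ matches the edge condition on the nose; picking $q\in(\eps,2\eps)$ (e.g.\ $q=(1+\eps)\eps$) creates constant-factor slack on both sides, which is absorbed by the polynomial concentration afforded by the assumption $v(\cC)\ge d^{50}$. The exponent $50$ is inessential—any exponent strictly greater than $20$ would suffice—and the smallness of $\eps$ is needed only to ensure that the Harper contribution $5v(\cC)\log_2 d$ is dominated by $\eps dv(\cC)$ for $d$ large.
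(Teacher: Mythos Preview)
Your proof is correct and follows essentially the same route as the paper: random subsampling of $\cC$ with probability $\Theta(\eps)$, Harper's inequality to control internal edges, and the bounded difference inequality for concentration. The paper works directly with $e_{\mathrm{out}}(\hat\cC)$ and calibrates by choosing $\eps\in(\eps_0/2,\eps_0)$ after fixing the sampling probability $\eps_0$, whereas you pass to the complementary quantity $Z$ and calibrate via $q=(1+\eps)\eps$; these are cosmetic differences. One tiny wording point: the line ``$Z\le(1+o(1))\mathbb{E}[Z]$'' is not quite what you want, since $\mathbb{E}[Z]$ could in principle be $o(dv(\cC))$; what your concentration actually gives (and what you need) is $Z\le \mathbb{E}[Z]+o_\eps(dv(\cC))$, which combined with $\mathbb{E}[Z]\le(1+o(1))\tfrac{q}{2}\cdot qdv(\cC)$ and $\tfrac{q}{2}<\eps$ yields $Z\le \eps d v(\cC')$ as desired.
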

\begin{proof}
Fix $\eps_0 \in (0,1)$ and define $\hat \cC$ to be a random subset of $\cC$ where every set is included independently with probability $\eps_0$. 
Note that $v(\hat\cC)$ is a $d^{10}$-Lipschitz function of the random vector $(\mathds{1}_{C\in \hat\cC})_{C\in \cC}$ and thus, given $\cC$ with $v(\cC)\ge d^{50}$, by the bounded difference inequality (\Cref{azuma}), \whp $v(\hat \cC)=(1+o(1))\eps_0v(\cC)$.
Moreover, by Lemma \ref{l: Harper} and the assumption that all sets in $\cC$ have size at most $d^{10}$, for every $C\in \cC$,
\begin{align*}
    e(C,V(Q^d)\setminus C)\ge (d-\log_2(d^{10}))|C|.
\end{align*}
Thus,
\begin{align}
    e_{\mathrm{out}}(\cC)+2e_{\mathrm{in}}(\cC)=\sum_{C\in\cC}e(C,V(Q^d)\setminus C)\ge (d-10\log_2d)v(\cC).\label{eq: estimae on cc}
\end{align}
Moreover,
\begin{equation}\label{eq:Ee_out}
\begin{split}
\mathbb{E}[e_{\mathrm{out}}(\hat\cC)]=\eps_0\cdot e_{\mathrm{out}}(\cC)+2\eps_0(1-\eps_0)\cdot e_{\mathrm{in}}(\cC)
&\ge \eps_0(1-\eps_0)\left(e_{\mathrm{out}}(\cC)+2e_{\mathrm{in}}(\cC)\right)\\
&\ge \eps_0(1-\eps_0)(d-10\log_2d)v(\cC),
\end{split}
\end{equation}
where the second inequality follows from \eqref{eq: estimae on cc}. 
Further, note that $e_{\mathrm{out}}(\hat\cC)$ is $d^{11}$-Lipschitz function of the random vector $(\mathds{1}_{C\in \hat\cC})_{C\in \cC}$ and thus, by~\eqref{eq:Ee_out} and the bounded difference inequality, \whp $e_{\mathrm{out}}(\hat\cC) \ge (1-o(1))\eps_0(1-\eps_0)dv(\cC)$.
In particular, there exists a set $\cC'\subseteq\cC$ satisfying
\begin{align*}
    v(\cC') = (1-o(1))\eps_0v(\cC)\quad \text{and}\quad e_{\mathrm{out}}(\cC') \ge (1-o(1))(1-\eps_0)\eps_0dv(\cC) = (1-o(1))(1-\eps_0)dv(\cC').
\end{align*}
Choosing $\eps\in (\eps_0/2,\eps_0)$, completes the proof.
\end{proof}

\subsection{Branching processes and the BFS algorithm}\label{BFS description}
We begin with a lemma quantifying the extinction probability of a supercritical Galton-Watson process with large total progeny. 
Recall that, for a probability distribution $\mu$ on non-negative integers, a \emph{Galton-Watson process with offspring distribution $\mu$} is a branching process starting from a root where every vertex produces a random number of children with distribution $\mu$ independently of other vertices.
The following lemma is a simplified version of \cite[Theorem~3.8]{vdH18}.

\begin{lemma}\label{lem:branch}
Fix a Galton-Watson process $T$ with offspring distribution $\mu$ with mean $\mathbb E\mu > 1$.
Then, there is a constant $I = I(\mu) > 0$ such that, for every $k\ge 0$, $\mathbb P(k\le v(T) < \infty)\le \e^{-Ik}/(1-\e^{-I})$.
\end{lemma}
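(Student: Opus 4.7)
The plan is to reduce the claim to an exponential tail bound for the total progeny of a \emph{subcritical} Galton-Watson process via the standard extinction-duality. Let $q := \mathbb P(v(T) < \infty)$ and $\phi(s) := \sum_{k \ge 0}\mu(k)\, s^k$. Since $\mathbb E\mu > 1$, $q < 1$, and if $q = 0$ (equivalently $\mu(0) = 0$) the claim is trivial; otherwise $q \in (0, 1)$ is the unique fixed point of $\phi$ in that interval. Define the \emph{dual} offspring distribution $\tilde\mu(k) := q^{k-1}\mu(k)$: it is a probability distribution because $\phi(q) = q$, and its mean equals $\tilde m := \phi'(q) < 1$ by strict convexity of $\phi$ on $[0, 1]$ (the graph of $\phi$ crosses the diagonal from above at $q$).

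A direct computation on finite ordered trees identifies the conditional law of $T$ given extinction with the law of a Galton-Watson tree $\tilde T$ with offspring distribution $\tilde\mu$: for any finite tree $t$ on $n$ vertices with child counts $d_1, \ldots, d_n$, one has $\sum_i d_i = n - 1$, whence $\mathbb P(\tilde T = t) = \prod_i \tilde\mu(d_i) = q^{-1}\prod_i \mu(d_i) = q^{-1}\,\mathbb P(T = t) = \mathbb P(T = t \mid v(T) < \infty)$. The lemma therefore reduces to proving $\mathbb P(v(\tilde T) \ge k) \le \e^{-Ik}/(1 - \e^{-I})$ for a suitable $I = I(\tilde\mu) > 0$.

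For this last step I would exploit that the dual has finite exponential moments near $1$: explicitly $\tilde\phi(s) := \sum_k \tilde\mu(k) s^k = \phi(sq)/q$ is finite for $s < 1/q$. The total-progeny PGF $\tilde F(s) := \mathbb E[s^{v(\tilde T)}]$ satisfies $\tilde F(s) = s\,\tilde\phi(\tilde F(s))$, and at $(s, u) = (1, 1)$ one has $\partial_u[u - s\,\tilde\phi(u)] = 1 - \tilde m > 0$, so the implicit function theorem extends $\tilde F$ analytically to a real neighbourhood of $1$. In particular $\tilde F(\e^{I'}) < \infty$ for some $I' > 0$, whence Markov's inequality yields $\mathbb P(v(\tilde T) = n) \le \tilde F(\e^{I'})\,\e^{-I'n}$ for every $n$; picking $I \in (0, I')$ small enough to absorb the constant $\tilde F(\e^{I'})$ into $\e^{-In}$ and summing the resulting geometric series over $n \ge k$ gives the claim. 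The proof is classical and I do not expect a genuine obstacle; the two delicate bookkeeping points are the dual-process identity (handling the degenerate case $\mu(0) = 0$ separately) and the strict inequality $\tilde m < 1$, both of which rest on the strict convexity of $\phi$ and on $q$ being the \emph{smallest} fixed point of $\phi$ in $[0, 1]$.
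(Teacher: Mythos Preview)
Your argument is correct. The paper does not actually prove this lemma; it simply records it as a simplified version of \cite[Theorem~3.8]{vdH18} and moves on. Your route via the extinction duality $\tilde\mu(k)=q^{k-1}\mu(k)$, followed by an analytic continuation of the total-progeny PGF past $s=1$, is a clean self-contained proof. One small point worth making explicit in a write-up: the implicit function theorem only gives you a local analytic solution of $u=s\tilde\phi(u)$ near $(1,1)$; to conclude that the \emph{power series} $\tilde F(s)=\sum_n\mathbb P(v(\tilde T)=n)s^n$ itself has radius of convergence strictly larger than $1$, you should invoke Pringsheim's theorem (non-negative coefficients force a singularity at the radius of convergence on the positive real axis, which the IFT rules out at $s=1$). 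The reference \cite{vdH18} argues instead through the random-walk representation of the total progeny and a Chernoff bound, which is equivalent in spirit but avoids the complex-analytic step.
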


An important parameter describing a Galton-Watson branching process is its \emph{survival probability} given by the probability that the process contains an infinite number of vertices.
Following the introduction, we denote by $y(c)$ the survival probability of a Galton-Watson process with offspring distribution Poisson($c$) and by $b(n,p)$ the survival probability of a Galton-Watson process with offspring distribution $\mathrm{Bin}(n,p)$.
The following lemma compares these survival probabilities.

\begin{lemma}\label{lem:yb}
Fix $c>1$. We have $y(c-\eps)-y(c)=O_c(\eps)$ when $\eps\to 0$, and $y(c)-b(d,c/d) = O_c(1/d)$.
\end{lemma}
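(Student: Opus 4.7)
The plan is to reduce both estimates to the fixed-point identities satisfied by the two survival probabilities: $y=y(c)$ is the unique solution in $(0,1]$ of $1-y=e^{-cy}$, and $b_d\coloneqq b(d,c/d)$ is the unique solution in $(0,1]$ of $1-b_d=(1-cb_d/d)^d$. A single non-routine ingredient that I would isolate at the outset is the \emph{transversality} $1-c(1-y)>0$ for $c>1$. This follows from a standard analysis of the Poisson generating function $\phi_c(s)=e^{-c(1-s)}$: it is strictly convex, lies above the diagonal at $s=0$, crosses it at $s=1-y$, and meets it again at $s=1$ with slope $\phi_c'(1)=c>1$; strict convexity forces the slope at the first crossing to satisfy $\phi_c'(1-y)=c(1-y)<1$.

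For the first part, I would differentiate $1-y=e^{-cy}$ implicitly in $c$ to obtain
\[y'(c)=\frac{y(1-y)}{1-c(1-y)},\]
which by the transversality is continuous and bounded on any compact subset of $(1,\infty)$. The mean value theorem then yields $|y(c-\eps)-y(c)|\le \bigl(\sup_{c'\in[c-\eps,c]}y'(c')\bigr)\eps=O_c(\eps)$ as $\eps\to 0$.

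For the second part, I would first show $b_d\ge y$ and then bound the overshoot by $O_c(1/d)$. Setting $g(s)\coloneqq e^{-cs}-(1-s)$, the elementary bound $(1-cb_d/d)^d\le e^{-cb_d}$ gives $g(b_d)\ge 0$; since $g$ is strictly convex with $g(0)=g(y)=0$ and $g'(0)=1-c<0$, it is negative on $(0,y)$ and positive on $(y,1]$, so from $b_d>0$ and $g(b_d)\ge 0$ one concludes $b_d\ge y$. For the reverse direction, the Taylor expansion
\[d\log(1-cb_d/d)=-cb_d-\sum_{k\ge 2}\frac{c^k b_d^k}{k\,d^{k-1}}=-cb_d-O_c(1/d)\]
yields $1-b_d\ge e^{-cb_d}(1-O_c(1/d))$, whence $0\le g(b_d)\le O_c(1/d)$. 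Because $g'(y)=1-c(1-y)>0$ by the transversality, $g$ is locally invertible near $y$ with Lipschitz inverse, so for all sufficiently large $d$ (the small $d$ case being trivial since both quantities lie in $[0,1]$), one gets $b_d-y=O_c(g(b_d))=O_c(1/d)$.

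The only step that is not a direct calculation is the transversality $1-c(1-y)>0$, which is a standard consequence of the strict convexity of $\phi_c$ combined with the supercriticality $c>1$; I do not foresee any further obstacle, and the rest of the argument is just the mean value theorem combined with the first two terms of a power series.
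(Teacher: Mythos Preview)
Your proof is correct and follows essentially the same strategy as the paper's: both rely on the fixed-point characterisations of the extinction/survival probabilities together with the transversality condition $c(1-y)\neq 1$ (the paper phrases this as $\partial_q(e^{c(q-1)}-q)\neq 0$ at $q=1-y$ and invokes the implicit function theorem; you compute $y'(c)$ explicitly). For the second part, the paper compares the generating functions $f(c,q)=e^{c(q-1)}$ and $f_d(c,q)=(1-c(1-q)/d)^d$ uniformly to $O_c(1/d)$ and then solves for $\bar b-\bar y$ algebraically, whereas you first establish the sign $b_d\ge y$ via $(1-x)^d\le e^{-dx}$ and then bound the overshoot. Your route has the minor advantage of yielding the monotonicity $b_d\ge y$ for free.

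One small point worth tightening: in your final step you invoke ``local invertibility with Lipschitz inverse'' of $g$ near $y$, which tacitly assumes $b_d$ is already close to $y$. You can bypass this entirely by using the convexity you already noted: since $g$ is convex with $g(y)=0$, the tangent inequality gives $g(s)\ge g'(y)(s-y)$ for all $s\ge y$, so $b_d-y\le g(b_d)/g'(y)=O_c(1/d)$ holds globally without any preliminary localisation.
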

\begin{proof}
It is a classic fact (see e.g.\ \cite[Theorem~3.1]{vdH18}) that the extinction probability of a Galton-Watson process with distribution $\mu$ is given by the smallest solution in $[0,1]$ of the equation
\[q=f_\mu(q)\qquad\text{where}\qquad f_\mu:t\in [0,1]\mapsto \sum_{i=0}^{\infty} \mu(i)t^i.\]
In particular, $\bar y(c) := 1-y(c)$ is the smallest solution in $[0,1]$ to $f(c,q)=q$ where $f(c,q):=\e^{c(q-1)}$ while $\bar b(d,c/d) := 1-b(d,c/d)$ is the smallest solution in $[0,1]$ to $f_d(c,q)=q$ where $f_d(c,q):=(1-c/d+cq/d)^d$.
To establish the first equality in the statement of the lemma, we use the implicit function theorem (see e.g.\ \cite[Theorem 2-12]{Spi65}) to show that $\bar y$ is a differentiable function on the interval $(1,\infty)$. To this end, it suffices to verify that
\begin{equation}\label{eq:cross}
\frac{\partial}{\partial q}(f(c,q)-q)=c\e^{c(q-1)}-1 \neq 0\qquad \text{whenever $c>1$ and $q$ satisfy}\qquad \e^{c(q-1)}=q.
\end{equation}
Assuming for contradiction that both equalities hold would mean that $cq=1$, further implying that $c\e^{1-c}=1$. However, immediate analysis shows that $c\e^{1-c}<1$ for every $c>1$. As a result, $\bar y$ (and hence $y$ as well) is a differentiable function of $c$ on the interval $(1,\infty)$. The first equality follows.

Furthermore, it is immediate that $(f_d)_{d\ge 1}$ and $(\tfrac{\partial}{\partial q}f_d)_{d\ge 1}$ approximate uniformly $f$ and $\tfrac{\partial}{\partial q}f$ on the interval $q\in [0,1]$ with $\|f(c,\cdot)-f_d(c,\cdot)\|_{\infty} = O_c(1/d)$ and $\|\tfrac{\partial}{\partial q}f(c,\cdot)-\tfrac{\partial}{\partial q}f_d(c,\cdot)\|_{\infty} = O_c(1/d)$.
Moreover, for every $c>1$,
\begin{align*}
\bar b(d,c/d) - \bar y(c) 
&= f_d(c,\bar b(d,c/d))-f(c,\bar y(c))\\
&= f_d(c,\bar b(d,c/d))-f(c,\bar b(d,c/d))+f(c,\bar b(d,c/d))-f(c,\bar y(c))\\
&= O_c(1/d)+(\bar b(d,c/d) - \bar y(c))\frac{\partial}{\partial q}f(c,\bar y(c)) + o(\bar b(d,c/d) - \bar y(c)).
\end{align*}
Since $\frac{\partial}{\partial q}f(c,\bar y(c))\neq 1$ by~\eqref{eq:cross}, the second equality in the statement of the lemma follows.
\end{proof}

Next, we present a variant of the Breadth First Search (BFS) algorithm, which generates a random subgraph $G_p$ of a graph $G$ while exploring a spanning forest of $G_p$. The algorithm is fed a graph $G=(V,E)$ with an ordering $\sigma$ on its vertices, and a sequence $(X_e)_{e\in E}$ of i.i.d. Bernoulli$(p)$ random variables (with $p\in [0,1]$). 
We maintain three (dynamic) sets of vertices partitioning $V$ at any time: $S$, the set of vertices whose exploration was completed; 
$U$, the set of vertices currently explored and kept in a queue processed following a first-in-first-out discipline; 
and $T$, the set of vertices not yet to be explored by the process. We initialise $S=U=\varnothing$ and $T=V$. The algorithm terminates once $U\cup T=\varnothing$. The algorithm proceeds as follows.
\begin{enumerate}
    \item If $U$ is empty, move the first (according to $\sigma$) vertex $v$ in $T$ to $U$.\label{first}
    \item Otherwise, if $U\neq \varnothing$, let $u$ be the first (according to the queue ordering) vertex in $U$.
    \begin{enumerate}
        \item If $u$ has no neighbours in $T$, move $u$ from $U$ to $S$ and return to \eqref{first}.\label{second}
        \item Otherwise, let $v$ be the first vertex in $T$ according to $\sigma$ such that $e=uv\in E$ and $X_e$ has not been queried.
        \begin{enumerate}
            \item If $X_e=0$ and return to \eqref{second}.
            \item If $X_e=1$, we move $v$ from $T$ to $U$ and return to \eqref{second}.
        \end{enumerate}
    \end{enumerate}
\end{enumerate}

\section{Upper tail}\label{s: upper tail}
In this section, we fix $c>1$, let $p=p(d):=c/d$ and denote by $L_1$ the giant component of $Q^d_p$. 
We further abbreviate $n\coloneqq 2^d$ for the number of vertices in $Q^d$. 

Recall the survival probability $y(c)$. It is a classic fact~\cite{Dwa69,Ott49,Pit97} that a Galton-Watson process with offspring distribution Poisson($c$) has order $k\ge 1$ with probability $\e^{-ck}(ck)^{k-1}/k!$, and is infinite with probability
\begin{equation}\label{eq: ER}
    y(c) := 1-\sum_{k=1}^{\infty}\e^{-ck}\frac{(ck)^{k-1}}{k!}.
\end{equation}
We note that, when $c\in [0,1]$, the weights $\e^{-ck}(ck)^{k-1}/k!$ describe the \emph{Borel probability distribution} over the positive integers and $y(c)=0$.

We are now ready to prove the upper tail estimate promised in \Cref{th: large tail deviation}.

\begin{proof}[Proof of the upper tail estimate in Theorem \ref{th: large tail deviation}]

For every $k\in [d]$, denote by $V_k$ the set of vertices in components of order $k$ in $Q^d_p$.
To estimate $\mathbb E[|V_k|]$, fix a vertex $v$ and list all trees $T_1,\ldots,T_r\subseteq Q^d$ of order $k$ rooted at $v$. Then,
\[\mathbb P(v\in V_k) = \sum_{i=1}^r \mathbb P\bigg(\{T_i\subseteq Q^d_p\}\cap \bigcap_{j=1}^{i-1} \{T_j\not\subseteq Q^d_p\}\bigg)\ge r p^{k-1} (1-p)^{kd},\]
where the last inequality holds since each event requires $k-1$ open edges and less than $kd$ closed edges.
Choosing $k=k(d)\in [1,d^{1/4}]$ and combining the latter bound with \Cref{l: trees}, we obtain that
\begin{align*}
    \mathbb{E}[|V_k|]&\ge n\cdot \frac{k^{k-2}(d-k)^{k-1}}{(k-1)!} p^{k-1}(1-p)^{dk} = (1-O(d^{-1/2})) n\cdot \e^{-ck}\frac{(ck)^{k-1}}{k!},
\end{align*}
where we used that $(d-k)^{k-1}p^{k-1} = (1-O(d^{-1/2}))c^{k-1}$ and $(1-p)^{kd} = (1-O(d^{-1/2}))\e^{-ck}$, with the implicit constant in the $O$-terms being independent of $k$.
Now, for every integer $t\in [n]$, define
\[f(t,n):= \left\lceil \frac{1}{-\log(\e c\cdot \e^{-c})} \log\bigg(\frac{4n}{t(1-\e c\cdot \e^{-c})}\bigg) \right\rceil.\] 
Then, for every $t \in [n]$, 
\begin{align*}
\sum_{k=f(t,n)}^{\infty}\e^{-ck}\frac{(ck)^{k-1}}{k!} &\leq  \sum_{k=f(t,n)}^{\infty} \frac{\e^{-ck}(ck)^{k}}{(k/\e)^k} = \sum_{k=f(t,n)}^{\infty} (\e c\cdot \e^{-c})^k \leq \frac{(\e c\cdot \e^{-c})^{f(t,n)}}{1-\e c\cdot \e^{-c}}
\le \frac{t}{4n},
\end{align*}
where we used that $\e c\cdot \e^{-c}<1$ when $c>1$ for the second inequality. By combining the latter chain of inequalities with~\eqref{eq: ER}, 
we obtain that, for all $t\in [n/d^{0.1},n]$
\begin{align*}
\mathbb{E}\left[\sum_{k=1}^{f(t,n)}|V_k|\right]&\ge (1-O(d^{-1/2}))n\bigg[\sum_{k=1}^{\infty}\e^{-ck}\frac{(ck)^{k-1}}{k!}
- \sum_{k=f(t,n)+1}^{\infty}\e^{-ck}\frac{(ck)^{k-1}}{k!}\bigg]
\\&\geq \left(1-y(c)-O(d^{-1/2})\right)n- \frac{t}{4}.
\end{align*}
Now, fix $t\in [n/d^{0.1},n]$ and set $Z=\sum_{k=1}^{f(t,n)}|V_k|$. Since adding or removing one edge from $Q^d_p$ can change the value of $Z$ by at most $2f(t,n)$, we have that, by the bounded difference inequality (Lemma \ref{azuma}), for every $m\ge 1$,
\begin{align*}
    \mathbb{P}\left(Z\le \mathbb EZ-m\right)\le \exp\bigg(-\frac{m^2}{2 (2f(t,n))^2 cn + 2 (2f(t,n)) m/3}\bigg).
\end{align*}
Therefore, for all $t\in [n/d^{0.1},n]$,
\begin{align*}
\mathbb{P}(v(L_1)\ge y(c)n+t) 
&\le \mathbb{P}(Z\le \mathbb EZ - (t-t/4-O(d^{-1/2}n))\le \mathbb{P}(Z\le \mathbb EZ-t/2)\\
&\le\exp\bigg(-\frac{(t/2)^2}{2 (2f(t,n))^2 cn + 2 (2f(t,n)) (t/2)/3}\bigg)\leq \exp\bigg(-\frac{\eps t^2}{n (\log(n/t))^2}\bigg),
\end{align*}
where $\eps=\eps(c)>0$ is a constant depending only on $c$, as required.
\end{proof}

\section{Lower tail}\label{s: lower tail}
In this section, we set $p=p(d)=c/d$ and fix a suitably small constant $\delta = \delta(c)\in (0,1)$ (such that, in particular, $c-\delta>1$).
We also set $p_1=p_1(d)=(c-\delta)/d$ and $p_2$ such that $(1-p_1)(1-p_2)=1-p$; note that $p_2d\ge \delta$. 
Further set $G_1\coloneqq Q^d_{p_1}$ and $G_2\coloneqq G_1\cup Q^d_{p_2}$, where we stress that $Q^d_{p_2}$ is sampled independently from $G_1$. Note that $G_2$ has the same distribution as $Q^d_p$. We further abbreviate $n:=2^d$.

Before proving the bound on the lower tail in Theorem \ref{th: large tail deviation}, some preparation is necessary. This section is structured as follows. 
First, in Section \ref{subsec: vol}, we establish tail bounds on the number of vertices in components of order at least $d^{10}$ in $Q^d_p$. Then, in Section \ref{subsec: typical}, we establish quantitative bounds on the probability of certain typical structural properties of $G_1$ and describe the interaction between $G_1$ and $G_2$. Finally, in Section \ref{subsec: lower bound proof}, we prove the bound on the lower tail in Theorem \ref{th: large tail deviation}.

\subsection{\texorpdfstring{Vertices in components of order at least $d^{10}$}{Vertices in large components}}\label{subsec: vol}
In this section, for every $\alpha \ge 0$, denote by $\cM_\alpha$ the set of components in $G_2$ of order at least $d^{\alpha}$.
Our first lemma studies $\cM_{1/4}$.
\begin{lemma}\label{l: volume of large}
For every $t\in [n/d^{1/9},n]$, we have
$\mathbb{P}(|v(\cM_{1/4}) - y(c)n|\ge t)\le \exp(-t^2/(40cd^{1/2}n))$.
\end{lemma}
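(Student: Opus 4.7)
My plan is to apply the bounded difference inequality (\Cref{azuma}) to $v(\cM_{1/4})$, viewed as a function of the independent $\mathrm{Bernoulli}(p)$ edge indicators of $Q^d$. Two ingredients are required: an expectation estimate showing $\mathbb{E}[v(\cM_{1/4})] = y(c)n + o(n/d^{1/9})$, and a Lipschitz constant of order $d^{1/4}$ for $v(\cM_{1/4})$.

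For the expectation, I would reuse the computation from the proof of the upper tail in \Cref{s: upper tail}. Writing $V_k$ for the set of vertices in components of order exactly $k$, that proof (via \Cref{l: trees}) gives $\mathbb{E}[|V_k|] = (1 + O(d^{-1/2}))\, n\, \e^{-ck}(ck)^{k-1}/k!$ uniformly for $k \le d^{1/4}$, with the implicit constant independent of $k$. Summing over $k < d^{1/4}$ and using the Borel identity $\sum_{k \ge 1} \e^{-ck}(ck)^{k-1}/k! = 1 - y(c)$, together with the fact that the tail $\sum_{k \ge d^{1/4}} \e^{-ck}(ck)^{k-1}/k!$ decays like $(\e c \cdot \e^{-c})^{d^{1/4}}$ since $\e c \cdot \e^{-c} < 1$ for $c > 1$, yields $|\mathbb{E}[v(\cM_{1/4})] - y(c)n| = O(n/d^{1/2})$. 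This is below $t/2$ for every $t \ge n/d^{1/9}$, so the triangle inequality reduces the lemma to a concentration statement around the true mean.

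For the Lipschitz constant, I claim that toggling a single edge $uv$ of $Q^d$ alters $v(\cM_{1/4})$ by at most $2d^{1/4}$. Indeed, such a toggle can only merge the components of $u$ and $v$ (when the edge is added) or split their common component into two pieces (when the edge is removed and it was a bridge). Only vertices whose component size crosses the threshold $d^{1/4}$ can change their $\cM_{1/4}$-membership, and a brief case analysis, depending on whether each affected component has size at least $d^{1/4}$ or strictly less, bounds their count by $2(d^{1/4}-1)$.

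Combining the two ingredients, \Cref{azuma} applied with $m = dn/2$, $p = c/d$ and $C = 2d^{1/4}$ produces a denominator of the form $2C^2 mp + 2C(t/2)/3 = 4cd^{1/2}n + O(d^{1/4}t)$, in which the first term dominates for $t \le n$ and $d$ large. Replacing $t$ by $t/2$ to absorb the expectation offset and loosening the constants yields the advertised bound $\exp(-t^2/(40cd^{1/2}n))$. I do not anticipate substantial technical obstacles here; the main care is in verifying the Lipschitz constant via the case analysis and in confirming that the leading denominator constant matches up after absorbing the prefactor $2$ from \Cref{azuma}, which is where the range $t \ge n/d^{1/9}$ provides ample slack.
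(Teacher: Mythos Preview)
Your approach is essentially the paper's: bound the Lipschitz constant by $2d^{1/4}$, estimate the mean, and apply \Cref{azuma}. The only difference is in how you compute $\mathbb{E}[v(\cM_{1/4})]$. The paper runs a truncated BFS from a fixed vertex and couples it between two Galton--Watson trees with offspring distributions $\mathrm{Bin}(d-d^{1/4},p)$ and $\mathrm{Bin}(d,p)$, then invokes \Cref{lem:branch} and \Cref{lem:yb} to get $|\mathbb{E}[v(\cM_{1/4})]-y(c)n|=O_c(n/d^{3/4})$. You instead propose tree-counting via \Cref{l: trees} as in \Cref{s: upper tail}. That works, but note that the argument in \Cref{s: upper tail} only establishes the \emph{lower} bound $\mathbb{E}[|V_k|]\ge (1-O(d^{-1/2}))n\e^{-ck}(ck)^{k-1}/k!$; to write an equality you also need the matching upper bound, which requires a separate (easy) step: bound $\mathbb{P}(|C_v|=k)$ above by summing over spanning trees and using that any $k$-vertex set in $Q^d$ has at least $kd-k(k-1)$ boundary edges, so for $k\le d^{1/4}$ the correction is $1+O(d^{-1/2})$. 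Once that is filled in, your expectation estimate and the paper's are equivalent in strength, and the rest of the proof is identical.
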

\begin{proof}
Let $v\in V(Q^d)$.
We first estimate the probability that $v$ is in a component in $\cM_{1/4}$. 
To this end, we explore $Q^d_p$ by running the BFS algorithm from \Cref{BFS description} with the following modifications: we start the algorithm with $U=\{v\}$ and terminate the process when either $U$ becomes empty or $|S\cup U|$ contains at least $d^{1/4}$ vertices. 
Note that $v\in V(\cM_{1/4})$ if and only if the BFS process terminates due to the second stopping condition. 
We let $T_1$ be the tree rooted at $v$ which is constructed by the BFS. Note that $T_1$ has at most $d^{1/4}$ vertices.

Since $|S\cup U|\le d^{1/4}$ throughout the execution of the algorithm, at all times, every vertex $u\in U$ satisfies $|N_{Q^d}(u)\setminus (S\cup U)|\ge d-d^{1/4}$. 
As a result, we can (and do) couple the process $T_1$ with Galton-Watson processes $T_2$ and $T_3$ rooted at $v$ with offspring distributions respectively $\mathrm{Bin}(d-d^{1/4},p)$ and $\mathrm{Bin}(d,p)$ so that:
\begin{itemize}
    \item if $v(T_2)=\infty$, then $v(T_1)=d^{1/4}$, and
    \item if $v(T_1)=d^{1/4}$, then $v(T_3)\ge d^{1/4}$.
\end{itemize}

Now, on the one hand, by \Cref{lem:yb}, we have 
\[\mathbb{P}(v\in V(\cM_{1/4}))\ge \mathbb{P}(|v(T_2)|=\infty) \ge y((d-d^{1/4})p)-O_c(1/d)\ge y(c)-O_c(d^{-3/4}).\] 
On the other hand, by \Cref{lem:branch} and \Cref{lem:yb}, there is $I=I(c)>0$ such that 
\[\mathbb{P}(v\in V(\cM_{1/4}))\le \mathbb P(|T_3|\ge d^{1/4})=b(d,c/d)+O_c(\e^{-Id^{1/4}})=y(c)+O_c(1/d).\]
Combining the latter bounds implies that $|\mathbb E[v(\cM_{1/4})]-y(c)n|=O_c(n/d^{3/4})$.
Furthermore, note that adding or removing one edge from $Q^d_p$ can change $v(\cM_{1/4})$ by at most~$2d^{1/4}$.
Thus, by the bounded difference inequality (Lemma \ref{azuma}), for every $t\in [n/d^{1/9},n]$, we have
\begin{align*}
\mathbb{P}(|v(\cM_{1/4})-y(c)|\ge t)&\le \mathbb{P}(|v(\cM_{1/4})- \mathbb{E}[v(\cM_{1/4})]|\ge t/2)\\
&\le 2\exp\bigg(-\frac{t^2/4}{2 (2d^{1/4})^2\cdot (nd/2)\cdot p + 2 (2d^{1/4})\cdot t/6)}\bigg)\le \exp\bigg(-\frac{t^2}{40cd^{1/2}n}\bigg),
\end{align*}
as required.
\end{proof}

Next, we turn our attention to the family $\cM_{10}\setminus \cM_{1/4}$.

\begin{lemma}\label{l: odd size}
There is a constant $\eps=\eps(c)>0$ such that
$\mathbb{P}(v(\cM_{10}\setminus \cM_{1/4})\ge n/d^{1/9})\le \exp(-\eps n/d^{1/9})$.
\end{lemma}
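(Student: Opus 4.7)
The inclusion $\cM_{10}\subseteq\cM_{1/4}$ makes $\cM_{10}\setminus\cM_{1/4}$ empty, so I interpret the stated set as $\cM_{1/4}\setminus\cM_{10}$, namely the family of components of $G_2$ of order in $[d^{1/4},d^{10})$. My plan is a union bound over spanning forests of these ``medium'' components, after using \Cref{l: sparsification} to ensure that the enumerated forests have external boundary near the maximum possible.

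Let $\cC:=\cM_{1/4}\setminus\cM_{10}$ and condition on $v(\cC)\geq n/d^{1/9}$. For $d$ large, $v(\cC)\geq d^{50}$ and every member of $\cC$ has order at most $d^{10}$, so \Cref{l: sparsification} yields a subfamily $\cC'\subseteq\cC$ with $v(\cC')\geq \eps n/d^{1/9}$ and $e_{\mathrm{out}}(\cC')\geq(1-2\eps)d\,v(\cC')$, for a small constant $\eps=\eps(c)>0$ to be fixed at the end. Choosing a canonical spanning tree of each $C\in\cC'$ and taking their disjoint union, we obtain a spanning forest $F\subseteq Q^d_p$ in $Q^d$ with $v:=v(F)\geq\eps n/d^{1/9}$ vertices, $m$ trees each of size in $[d^{1/4},d^{10})$, and all $e_{\mathrm{out}}(F)\geq(1-2\eps)dv$ edges leaving $V(F)$ closed in $Q^d_p$. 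The probability that a fixed such $F$ arises is at most
\[
p^{v-m}(1-p)^{(1-2\eps)dv}\leq (c/d)^{v-m}e^{-c(1-2\eps)v}.
\]

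To enumerate such forests, I apply \Cref{cor:forests} with root set $U\subseteq V(F)$ of size $m$ containing exactly one vertex per tree. The corollary bounds the number of pairs $(U,F)$ by $\binom{n}{m}v^{v-m}d^{v-m}/(v-m)!$; since each $F$ with tree sizes $k_1,\ldots,k_m\geq d^{1/4}$ appears $\prod_i k_i\geq d^{m/4}$ times, the number of forests with parameters $(v,m)$ is at most $\binom{n}{m}v^{v-m}d^{v-m}/((v-m)!\,d^{m/4})$. Combining with the probability bound and using $\binom{n}{m}\leq n^m/m!$ together with $v^{v-m}/(v-m)!\leq e^{v(1+o(1))}$ (valid for $m\leq v/d^{1/4}$), the contribution of parameters $(v,m)$ simplifies to
\[
\frac{(n/(ec\cdot d^{1/4}))^m}{m!}\cdot(\mu')^v,\qquad \mu':=ec\cdot e^{-c(1-2\eps)},
\]
up to an $e^{o(v)}$ factor that can be absorbed into $\mu'$.

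Since $c>1$ implies $\mu:=ec\cdot e^{-c}<1$, choosing $\eps>0$ small makes $\mu'<1$. Summing over $m\geq 0$ yields $\exp(n/(ec\cdot d^{1/4}))$, and summing over $v\geq\eps n/d^{1/9}$ yields a geometric tail with base $\mu'$, producing
\[
\mathbb P\bigl(v(\cC)\geq n/d^{1/9}\bigr)\leq \frac{1}{1-\mu'}\exp\!\Bigl(\frac{n}{ec\cdot d^{1/4}}-|\log\mu'|\cdot\frac{\eps n}{d^{1/9}}\Bigr).
\]
Because $1/d^{1/4}\ll 1/d^{1/9}$, the negative term dominates for $d$ large, yielding the claimed estimate for some $\eps=\eps(c)>0$. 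The main obstacle is to balance the forest entropy against the exponential gain $(\mu')^v$; the sparsification step is precisely what drives the exponent on $(1-p)$ up to essentially $dv$, making $\mu'<1$ and allowing the ``energy'' to beat the combinatorial count.
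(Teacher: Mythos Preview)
Your proof is correct and follows essentially the same approach as the paper: apply \Cref{l: sparsification} to guarantee near-maximal external boundary, then bound the probability via a union bound over spanning forests, exploiting that $\e c\cdot \e^{-(1-2\eps)c}<1$ for small $\eps$. The only cosmetic difference is that you enumerate forests via \Cref{cor:forests} (parametrising by $(v,m)$ and dividing by the root multiplicity $\prod_i k_i\ge d^{m/4}$), whereas the paper enumerates via \Cref{l: trees} (parametrising by the individual tree sizes $x_1,\ldots,x_k$); both routes produce the same exponential rate.
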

\begin{proof}
Given a family $\cC$ of vertex-disjoint connected sets of order at most $d^{10}$ with $v(\cC)\ge n/d^{1/9}$,  \Cref{l: sparsification} implies that, for any suitably small $\eps_1>0$, there is a subfamily $\cC'\subseteq \cC$ such that $v(\cC')\ge \eps_1 n/d^{1/9}$ and $e_{\mathrm{out}}(\cC')\ge (1-2\eps_1)v(\cC')$. 
It thus suffices to estimate the probability that there exists such a family $\cC'$ of components of $G_2$ where each component in the family has order between $d^{1/4}$ and $d^{10}$.

To this end, fix $k\ge 1$ and integers $x_1,\ldots, x_k\in [d^{1/4},d^{10}]$ of sum $x\coloneqq\sum_{i=1}^{k}x_i\in[\eps_1n/d^{1/9},n]$. 
By Lemma \ref{l: trees}, the number of forests in $Q^d$ containing $k$ trees of orders $x_1,\ldots, x_k$ is bounded from above by $\binom{n}{k} \prod_{i=1}^{k}(\e d)^{x_i-1} = \binom{n}{k}(\e d)^{x-k}$. 
At the same time, the probability that each tree in such a forest $F$ spans a connected component of $G_2$ is bounded from above by $p^{x-k}(1-p)^{e_{\mathrm{out}}(F)}\le p^{x-k}(1-p)^{(1-2\eps_1)xd}$. 
Thus, by the union bound, the probability that a family $\cC'$ with the described properties exists is at most

\begin{align}  
\sum_{x=\eps_1n/d^{1/9}}^n&\sum_{\substack{x_1+\cdots+x_k=x\\x_1,\ldots,x_k\in[d^{1/4},d^{10}]}}\binom{n}{k}(\e d)^{x-k}p^{x-k}(1-p)^{(1-2\eps_1)xd}\nonumber\\
    &\le \sum_{x=\eps_1n/d^{1/9}}^n\sum_{\substack{x_1+\cdots+x_k=x\\x_1,\ldots,x_k\in[d^{1/4},d^{10}]}}\binom{n}{d^{-1/4}x}\exp((\log\left(\e c\right)-(1-2\eps_1)c)x)\nonumber\\
    &\le \sum_{x=\eps_1n/d^{1/9}}^n\sum_{\substack{x_1+\cdots+x_k=x\\x_1,\ldots,x_k\in[d^{1/4},d^{10}]}}\left(\frac{\e n}{d^{-1/4}x}\right)^{d^{-1/4}x}\exp((\log\left(\e c\right)-(1-2\eps_1)c)x),\label{eq:largeUB}
\end{align}
where the first inequality uses that $k\le d^{-1/4}x$. To simplify~\eqref{eq:largeUB}, note that there are at most $(d^{10})^{d^{-1/4}x}$ ways to choose $x_1,\ldots, x_k\in[d^{1/4},d^{10}]$ with $\sum_{i=1}^{k}x_i=x$.
Furthermore, by choosing $\eps_1$ suitably small with respect to $c$ and using that $c>1$, we have that $c_0 := (1-2\eps_1)c - \log(\e c) > 0$.
Thus,~\eqref{eq:largeUB} is at most
\begin{align*}
\sum_{x=\eps_1 n/d^{1/9}}^n\left(\frac{\e nd^{10+1/4}}{x}\right)^{d^{-1/4}x}\e^{-c_0x}\le \sum_{\eps_1n/d^{1/9}}^n\exp\bigg(x\left(\frac{1}{d^{1/4}}\log\left(\frac{\e nd^{41/4}}{x}\right)-c_0\right)\bigg).
\end{align*}
Now, since $x\ge\eps_1n/d^{1/9}$, we have $\log(\e nd^{10}/x)=O(\log d)$. Altogether, we obtain
\begin{align*}
\mathbb P(v(\cM_{10}\setminus \cM_{1/4})\ge n/d^{1/9})\le  \sum_{\eps_1n/d^{1/9}}^n\exp(-xc_0/2)\le n\exp(-\eps_1c_0n/(2d^{1/9})).
\end{align*}
Thus, setting $\eps = \eps_1 c_0/3$ finishes the proof.
\end{proof}

With the last two lemmas at hand, we conclude this subsection with the following deviation bound for $\cM_{10}$.

\begin{corollary}\label{l: W(p)}
For every $t\in [n/d^{1/9},n]$, $\mathbb{P}(|v(\cM_{10})-y(c)n|\ge 2t)\le 2\exp(-t^2/(40cd^{1/2}n))$.
\end{corollary}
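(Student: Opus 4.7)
The plan is to derive Corollary \ref{l: W(p)} as a direct union-bound consequence of Lemma \ref{l: volume of large} and Lemma \ref{l: odd size}. Observe first that since $10 > 1/4$, a component of order at least $d^{10}$ is automatically of order at least $d^{1/4}$, so $\cM_{10} \subseteq \cM_{1/4}$; consequently
\[
v(\cM_{1/4}) \;=\; v(\cM_{10}) \;+\; v(\cM_{1/4}\setminus \cM_{10}),
\]
and the triangle inequality yields
\[
|v(\cM_{10}) - y(c)n| \;\le\; |v(\cM_{1/4}) - y(c)n| \;+\; v(\cM_{1/4}\setminus \cM_{10}).
\]
(Here I read the set difference in Lemma \ref{l: odd size} in the only way consistent with its proof, namely as $\cM_{1/4}\setminus \cM_{10}$, i.e.\ components of order in $[d^{1/4}, d^{10})$.)

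Next, if the event $\{|v(\cM_{10}) - y(c)n| \ge 2t\}$ occurs, then at least one of the two summands on the right-hand side above must be at least $t$. Thus by the union bound,
\begin{align*}
\mathbb P\bigl(|v(\cM_{10}) - y(c)n| \ge 2t\bigr)
&\le \mathbb P\bigl(|v(\cM_{1/4}) - y(c)n| \ge t\bigr)
 + \mathbb P\bigl(v(\cM_{1/4}\setminus \cM_{10}) \ge t\bigr).
\end{align*}
For the first term I directly apply Lemma \ref{l: volume of large}, which is valid because $t \in [n/d^{1/9}, n]$, obtaining the bound $\exp(-t^2/(40cd^{1/2}n))$. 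For the second term, since $t \ge n/d^{1/9}$, the event $\{v(\cM_{1/4}\setminus \cM_{10}) \ge t\}$ is contained in $\{v(\cM_{1/4}\setminus \cM_{10}) \ge n/d^{1/9}\}$, whose probability is at most $\exp(-\eps n/d^{1/9})$ by Lemma \ref{l: odd size}.

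It remains to verify that the second contribution is dominated by the first, so that the sum is at most $2\exp(-t^2/(40cd^{1/2}n))$. Since $t \le n$, we have $t^2/(40cd^{1/2}n) \le n/(40c d^{1/2})$, whereas the exponent in the second bound is $\eps n/d^{1/9}$; as $d^{1/2} \gg d^{1/9}$, for $d$ sufficiently large (depending only on $c$) we get $\eps n/d^{1/9} \ge t^2/(40cd^{1/2}n)$ uniformly over $t \in [n/d^{1/9}, n]$, and the desired estimate follows. The proof is essentially a clean bookkeeping step; there is no genuine obstacle, since all analytic work has already been done in the two preceding lemmas.
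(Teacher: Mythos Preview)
Your proof is correct and follows essentially the same route as the paper: decompose $v(\cM_{10})=v(\cM_{1/4})-v(\cM_{1/4}\setminus\cM_{10})$, union-bound the two contributions, apply Lemmas~\ref{l: volume of large} and~\ref{l: odd size}, and observe that $\exp(-\eps n/d^{1/9})\le \exp(-t^2/(40cd^{1/2}n))$ for large $d$. Your reading of the set difference is the intended one.
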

\begin{proof}
Since $v(\cM_{10}) = v(\cM_{1/4})-v(\cM_{10}\setminus \cM_{1/4})$, Lemmas~\ref{l: volume of large} and~\ref{l: odd size} yield
\begin{align*}
\mathbb{P}(|v(\cM_{10})-y(c)n|\ge 2t)&\le \mathbb{P}(\{|v(\cM_{1/4})-y(c)n|\ge t\}\cup\{v(\cM_{10}\setminus \cM_{1/4})\ge t\})\\
&\le \mathbb{P}(|v(\cM_{1/4})-y(c)n|\ge t)+\mathbb{P}(v(\cM_{10}\setminus \cM_{1/4})\ge n/d^{1/9})\\
&\le \exp(-t^2/(40cd^{1/2}n))+\exp(-\eps n/d^{1/9})\le 2\exp(-t^2/(40cd^{1/2}n)).\qedhere
\end{align*}
\end{proof}

\subsection{\texorpdfstring{Typical properties of $G_1$ and $G_2$}{Typical properties of G1 and G2}}\label{subsec: typical}
The idea that `the giant component is well-spread' in the percolated hypercube has been used (sometimes implicitly) in many previous works \cite{AKS81,BKL92,DEKK24,EKK22}. The first lemma in this subsection gives a quantitative characterisation of this spreadness.

Recall the graphs $G_1$ and $G_2$ defined in the beginning of \Cref{s: lower tail}. 
Also, recall the families $(\cM_{\alpha})_{\alpha\ge 0}$ from \Cref{subsec: vol} and define $\cM_{\alpha}^-$ to be the set of components of order at least $d^{\alpha}$ in $G_1$.
For $\eps > 0$, we define the set of \emph{$\eps$-bad vertices} $\vbad = \vbad(\eps)$ to be the set of vertices in $Q^d$ with less than $\eps d$ neighbours (in $Q^d$) in components in $\cM_2^-$.
Further, we set $\cM_{[0,2)}^-:=\cM_0^-\setminus \cM_2^-$, that is, the family of components of order less than $d^2$ in $G_1$.

\begin{lemma}\label{l: large components are well spread}
For all suitably small constants $\eps = \eps(c,\delta)>0$, $\gamma = \gamma(\eps)>0$ and $\nu=\nu(c,\delta)>0$, for every $t\in [n^{1-\gamma},n]$, $\mathbb{P}(|\vbad(\eps)|\ge t)\le \e^{-\nu t}$.
\end{lemma}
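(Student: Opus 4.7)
The plan is to establish the exponential tail bound through a three-step procedure: prove a single-vertex estimate showing that any fixed vertex is $\eps$-bad with probability $e^{-\Omega(d)}$; algorithmically extract from any large candidate set of bad vertices a subfamily whose associated bad-events are (almost) independent; then combine via a union bound.

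For the single-vertex estimate, fix $v\in V(Q^d)$ and run $d$ sequential truncated BFS explorations in $G_1\setminus\{v\}$, one starting from each of $v$'s neighbours, terminating each exploration once it reaches size $d^2$. Since the total number of edges queried across all explorations is $O(d^3)=o(nd)$, each individual exploration locally resembles a Galton--Watson branching process with offspring distribution $\mathrm{Bin}(d-1,p_1)\approx \mathrm{Poisson}(c-\delta)$, whose survival probability is $y(c-\delta)+o(1)$ by Lemma~\ref{lem:yb}. By Lemma~\ref{lem:branch}, each exploration reaches size $d^2$ with probability at least $y(c-\delta)-\e^{-\Omega(d^2)}$. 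Moreover, the overlap between distinct explorations is mild and positively correlated (a shared large component helps), so the number of $v$'s neighbours in components of order $\ge d^2$ stochastically dominates a $\mathrm{Bin}(d,y(c-\delta)-o(1))$ variable. Choosing $\eps<y(c-\delta)/2$ and applying Chernoff's bound (Lemma~\ref{chernoff}) yields $\mathbb{P}(v\in \vbad(\eps))\le \e^{-\eta_0 d}$ for some constant $\eta_0=\eta_0(c,\delta,\eps)>0$.

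For the sparsification step---the content of Claim~\ref{cl:associate} referenced in the outline---I would associate to each $v\in \vbad$ a local edge-set witness $W(v)\subseteq E(Q^d)$, essentially the edges queried during the truncated BFS explorations from its neighbours, of size $d^{O(1)}$. Given any candidate subset $B\subseteq V(Q^d)$ with $|B|=t$, an algorithmic greedy procedure processes vertices of $B$ in some order, revealing witness edges lazily and including $v$ in $B'$ whenever $W(v)$ is disjoint from the union of previously accepted witnesses. Since each witness has polynomial size while $Q^d$ has $nd/2$ edges, one can extract a subfamily of size $|B'|\ge \alpha t$ whose bad-events depend on pairwise disjoint edge-sets, hence are independent, for some constant $\alpha=\alpha(c,\delta)>0$. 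Combining the two steps via a union bound,
\[
\mathbb{P}(|\vbad(\eps)|\ge t) \;\le\; \binom{n}{t}\max_{|B|=t}\mathbb{P}(B\subseteq \vbad) \;\le\; \binom{n}{t}\cdot \e^{-\alpha \eta_0 d t} \;\le\; \left(\frac{\e n}{t}\right)^t \e^{-\alpha \eta_0 d t}.
\]
For $t\ge n^{1-\gamma}$, $\log(\e n/t)\le 1+\gamma d\log 2$, so choosing $\gamma=\gamma(\eps)>0$ small enough that $\gamma \log 2<\alpha \eta_0$ produces the claimed bound $\e^{-\nu t}$ for a constant $\nu=\nu(c,\delta)>0$.

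The main obstacle lies in the second step. The events $\{v\in \vbad\}$ are heavily correlated for nearby vertices, whose BFS witnesses routinely share edges, and the algorithm must be set up so that a positive-constant fraction of $B$ survives the disjointness filter. Two regimes require care: for $t$ close to $n^{1-\gamma}$, one must control both the random size of each witness and the cumulative footprint; for $t=\Omega(n)$, the total witness volume may exceed $nd/2$, so strict edge-disjointness has to be relaxed and replaced by a finer probabilistic accounting of conflicts between witnesses. Engineering the edge-revelation procedure to navigate these correlations while preserving a linear-sized almost-independent subfamily is the technical heart of the lemma.
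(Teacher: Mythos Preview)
Your plan has two genuine gaps, one in each of the first two steps.

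\textbf{The single-vertex estimate.} The assertion that the number of neighbours of $v$ lying in large $G_1$-components stochastically dominates a $\mathrm{Bin}(d,y(c-\delta)-o(1))$ variable is not justified by positive correlation. The events $A_i=\{w_i\in V(\cM_2^-)\}$ are increasing and hence positively associated by Harris' inequality, but positive association makes the \emph{lower} tail of $\sum_i \mathbf{1}_{A_i}$ potentially heavier, not lighter, than for independent Bernoullis (think of the extreme case where all $A_i$ coincide). A sequential-revelation argument can be made to work, but it needs to track carefully that each new truncated BFS sees at least $d-o(d)$ fresh neighbours at every step; this is not automatic, since a single vertex in $Q^d$ can have all $d$ of its neighbours inside a previously explored set of size $d^3$.

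\textbf{The decoupling step.} This is the more serious obstacle, and you already flag it. For $t=\Theta(n)$ the witness of each bad vertex contains $\Omega(d)$ edges (at minimum one per neighbour) and in fact has size $d^{O(1)}$, so $\alpha t$ pairwise edge-disjoint witnesses would require $\Omega(n\cdot d^{O(1)})$ edges, overwhelming $|E(Q^d)|=nd/2$. Relaxing to ``almost independence'' does not obviously rescue the union bound $\binom{n}{t}\e^{-\alpha\eta_0 dt}$: for a fixed set $B$ of size $t$ one would still need $\mathbb{P}(B\subseteq\vbad)\le \e^{-\Omega(dt)}$ uniformly over all $B$, and nothing prevents $B$ from consisting of highly correlated vertices (e.g.\ $B$ contained in a small neighbourhood).

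\textbf{What the paper does instead.} The paper abandons the independence paradigm entirely and runs a first-moment computation over \emph{structures}, not over bad vertices. Claim~\ref{cl:associate} does not produce disjoint witnesses; it produces a set $U\subseteq\vbad$ together with disjoint families $(\cC_u)_{u\in U}$ of small $G_1$-components, each $\cC_u$ absorbing $(1-2\eps)d$ of $u$'s neighbours, with total volume $v(\cC)\ge \eps t/2$. One then applies the sparsification Lemma~\ref{l: sparsification} (to the sets $V(\cC_u)$, not to the bad vertices) to pass to a subfamily $\cC'$ with $e_{\mathrm{out}}(\cC')\ge (1-2\eta)d\,v(\cC')$. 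The punchline is that $\cC'$ is spanned in $G_1$ by a forest $F$ rooted in a set $R$ of size $(1-2\eps)d\,|U'|$; Corollary~\ref{cor:forests} counts such forests, and each carries $\ge (1-2\eta)d\,v(\cC')$ closed boundary edges. Balancing the forest count against $(1-p_1)^{(1-2\eta)d\,v(\cC')}$ yields the $\e^{-\nu t}$ bound directly, with no appeal to independence of bad-events. The role of $\gamma$ is only to absorb the $\binom{n}{m}$ factor for the choice of $U'$, which is harmless because $m\le v(\cC')/d'$ is much smaller than $v(\cC')$.
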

\begin{proof}
Fix suitably small $\eps= \eps(c),\gamma=\gamma(\eps)>0$ to be determined in the sequel and $t\in [n^{1-\gamma},n]$.

\begin{claim}\label{cl:associate}
Suppose that $|\vbad(\eps)|\ge t$. Then, there exist a set $U\subseteq \vbad(\eps)$, pairwise disjoint families of components $(\cC_u)_{u\in U}$ in $G_1$, and a set of edges $E'\subseteq E(Q^d)$ satisfying each of the following properties:
\begin{itemize}
    \item for every $u\in U$, $V(\cC_u)\subseteq V(\cM_{[0,2)}^-)$,
    \item for every $u\in U$, each component in $\cC_u$ contains at least one vertex $v$ such that $uv\in E'$,
    \item for every $u\in U$, the components in $\cC_u$ together contain exactly $(1-2\eps)d$ vertices $v$ such that $uv\in E'$,
    \item the components in $\cC := \bigcup_{u\in U} \cC_u$ jointly cover at least $\eps t/2$ vertices.
\end{itemize}
\end{claim}
\begin{proof}[Proof of \Cref{cl:associate}]
For some $m\ge 1$, we construct a sequence of sets $U_0=\varnothing\subseteq U_1\subseteq \ldots \subseteq U_m = U$ with $|U_i|=i$ for every $i\in [m]$, a sequence of sets $W_0=\vbad(\eps)\supseteq W_1\supseteq \ldots \supseteq W_m=\varnothing$, and a sequence of pairwise disjoint families of components $\cC_0=\varnothing\subseteq \cC_1\subseteq \ldots \subseteq \cC_m = \cC$ with $v(\cC)\ge \eps t/2$ as follows. 

First, we underline that, throughout the process, vertices in $W_i$ have at least $(1-2\eps)d$ neighbours in $V(\cM_{[0,2)}^-\setminus \cC_i)$. Note that this holds by definition for $i=0$. We now continue assuming $i\ge 1$. At the $i$-th step, select an arbitrary vertex $u$ in $W_{i-1}$. Then, set $U_i=U_{i-1}\cup \{u\}$ and let $\cC_u\subseteq \cM_{[0,2)}^-$ be a family of components disjoint from $\cC_{i-1}$ where the following conditions jointly hold: 
\begin{itemize}
    \item each component contains at least one neighbour (in $Q^d$) of $u$, and
    \item there exists a set of edges $E'_u$ between $u$ and $\cC_u$ of size exactly $(1-2\eps)d$ such that each component $C\in \cC_u$ is incident to an edge from $E'_u$.
\end{itemize}
\noindent
Note that such a family exists by our assumption on $W_{i-1}$. Then, we set $\cC_i=\cC_{i-1}\cup \cC_u$ and define $W_i$ as the subset of vertices in $W_{i-1}$ with at most $\eps d$ neighbours in $V(\cC_i)$ (and, in extension, at least $(1-2\eps)d$ neighbours in $V(\cM_{[0,2)}^-\setminus \cC_i)$, as required). We stress here that, by construction, every vertex in $\vbad(\eps)\setminus W_i$ has at least $\eps d$ neighbours in $V(\mathcal{C}_i)$.
The procedure ends at the first step $m$ when $W_m=\varnothing$.

We set $E'=\bigcup_{u\in U} E'_u$. Now, since every vertex in $U_m$ has at least $(1-2\eps)d$ neighbours in $V(\cC_m)$, we have that $d v(\cC_m)\geq  (1-2\eps)d |U_m|$, implying that $v(\cC_m)\geq  (1-2\eps) |U_m|$. In addition, since every vertex in $\vbad(\eps)\setminus U_m$ has at least $\eps d$ neighbours in $V(\mathcal{C}_m)$, we have that 
$\eps d |\vbad(\eps)\setminus U_m|\leq d v(\cC_m)$, implying that $|\vbad(\eps)\setminus U_m|\leq  v(\cC_m)/\eps$. As a result,
$$
t-\frac{v(\cC_m)}{1-2\varepsilon}\leq t- |U_m| \leq |\vbad(\varepsilon)\setminus U_m|\leq\frac{v(\cC_m)}{\varepsilon }.
$$ 
Assuming that $\varepsilon < 0.1$, the above inequality implies that $v(\cC_m)\geq \eps t/2$, as required.
\end{proof}

Next, conditionally on the event $|\vbad(\eps)|\ge t$ and on the graph $G_1$, fix a set $U$, a family $\cC$ and a set $E'$ as in \Cref{cl:associate}. 
Recall that, for each $u\in U$, every $C\in\mathcal{C}_u$ is adjacent to $u$, and $v(C)\le d^2$. Therefore $v(\mathcal{C}_u)\leq d^3$. 
Thus, we can apply \Cref{l: sparsification} to the family of vertex disjoint sets $(V(\cC_u))_{u\in U}$, and obtain that, for any sufficiently small $\eta > 0$, there is a set $U'\subseteq U$ such that $\cC' =\bigcup_{u\in U'} \cC_u$ satisfies
\begin{align}\label{eq:C'}
v(\cC')\ge \eta v(\cC)\ge \eta \eps t/2\quad \text{ and }\quad e_{\mathrm{out}}(\cC')\ge (1-2\eta) dv(\cC').
\end{align}
Fix such sets $U'\subseteq U$ and $\cC'\subseteq \cC$ for a suitably small $\eta=\eta(c)>0$ to be determined in the sequel.

Set $d':=(1-2\eps)d$ and $m\coloneqq |U'|$. For every $u\in U'$, denote by $R_u$ the set of $E'$-neighbours of $u$ in $V(\cC_u)$ and set $R=\bigcup_{u\in U'}R_u$. 
Observe that $G_1[V(\cC')]$ contains a spanning forest $F'$ where every tree in $F'$ intersects the set $R$.
Furthermore, by starting with $F'$ and recursively removing one edge on a path connecting two vertices in $R$ in the current forest until this is no longer possible, we obtain a forest $F$ with $|R|$ trees where each tree contains one vertex in $R$ (called its \emph{root}) and $V(\cC')=V(F)$. In particular, we have that $v(\cC')=d'm+|E(F)|$.
To complete the proof, we bound the probability that such a forest $F$ exists in $G_1$. We note that the argument from the proof of \Cref{l: odd size} would be suboptimal here, and we utilise \Cref{cor:forests} instead. 

First, there are $\binom{n}{m}$ ways to choose the set $U'$. 
Then, for every $u\in U'$, we choose the sets of roots in the $Q^d$-neighbourhood of $u$ in at most $\binom{d}{d'}$ ways. Letting $k$ be the number of edges in the forest $F$, by \Cref{cor:forests}, there are at most $(d'm+k)^kd^k/k!$ forests on $k$ edges in $Q^d$ whose roots lie in $R$. 
Due to the bounds~\eqref{eq:C'} on $v(\cC')=d'm+k$ and on the number of edges in $Q^d$ with exactly one endpoint in $V(\mathcal{C}')$, we conclude that the probability that a forest $F$ with the required properties exists is at most
\begin{equation}\label{eq:main-5.5}
\sum_{k=\max\{0,\eta \eps t/2-d'm\}}^{n-1} \binom{n}{m}\binom{d}{d'}^m\cdot \frac{(d'm+k)^kd^k}{k!}\cdot p^k (1-p_1)^{(1-2\eta)d(d'm+k)}.
\end{equation}
Recall that each component in $\cC$ has order at most $d^2$. Thus, thanks to~\eqref{eq:C'}, $d^2d'm\geq \eta\varepsilon t/2$, implying that $m\geq \eta \eps t/(2d^3)\ge \e n^{1-2\gamma}$.
Then, the product of the first two terms in~\eqref{eq:main-5.5} is bounded from above by 
\begin{align}
\bigg(\frac{\e n}{m}\bigg)^m \bigg(\frac{1}{(2\eps)^{2\eps}(1-2\eps)^{1-2\eps}}\bigg)^{dm}&=
\exp\left(2\gamma dm+dm\log\bigg(\frac{1}{(2\eps)^{2\eps}(1-2\eps)^{1-2\eps}}\bigg)\right)\notag \nonumber\\
&\leq 
\exp\left(2d'm\log\bigg(\frac{1}{(2\eps)^{2\eps}(1-2\eps)^{1-2\eps}}\bigg)\right)
\label{eq:1-2 terms}
\end{align}
for $\eps$ and $\gamma=\gamma(\eps)$ small enough. In turn, the product of the remaining terms in~\eqref{eq:main-5.5} is bounded from above by
\begin{align}
\label{eq:L5.4-auxiliary-bound}
\bigg(1+\frac{d'm}{k}\bigg)^k (\e dp)^k \e^{-(1-2\eta)(c-\delta)(d'm+k)}&\le \e^{d'm} (\e c)^k \e^{-(1-2\eta)(c-\delta)(d'm+k)}\notag\\&=(\e^{1-(1-2\eta)(c-\delta)})^{d'm}(c \e^{1-(1-2\eta)(c-\delta)})^k.
\end{align}

At this point, we choose $\eta=\eta(c,\delta)$ suitably small so that 
\begin{equation}
(1-2\eta)(c-\delta)\geq 1+2\eta \qquad \text{and}\qquad  c\cdot \e^{1-(1-2\eta)(c-\delta)}\le \e^{-\eta},
\label{eq:choice-eta}
\end{equation}
where we recall that $\delta=\delta(c)$ is sufficiently small.
In particular, \eqref{eq:choice-eta} implies that the expression in~\eqref{eq:L5.4-auxiliary-bound} is at most $\e^{-2\eta d'm-\eta k}$. Finally, we choose $\eps=\eps(\eta,c)$ suitably small ensuring that~\eqref{eq:1-2 terms} is bounded from above by $\e^{\eta d'm}$. 
Recalling that $d'm+k=v(\mathcal{C}')\geq\eta\varepsilon t/2$ due to~\eqref{eq:C'}, by combining the latter estimates with~\eqref{eq:main-5.5} and the union bound over all $m\in [\eta \eps t/2d^3,n]$, the probability that the desired forest $F$ exists is at most
$$n\max_{m\in [\eta \eps t/2d^3,n]}\bigg\{\sum_{k=\max\{0,\eta \eps t/2-d'm\}}^{n-1}\e^{-\eta d'm-\eta k}\bigg\}\le n^2\cdot\e^{-\eta^2 \eps t/2}\le\e^{-\nu t}$$
where $\nu = \eta^2 \eps/3$, completing the proof.
\end{proof}

Our next lemma shows that it is unlikely that many vertices in `small' components in $G_1$ merge into a `large' component in $G_2$. 
It is formulated in wider generality than needed here, as we apply the lemma slightly differently in \Cref{s: lower tail} (where \Cref{l: no surprises from outside} is mostly relevant) and in \Cref{s: mixing time}.
Recall $\delta, p_1, p_2, G_1, G_2$ from the beginning of \Cref{s: lower tail}.

\begin{lemma}\label{l: no surprises from outside_gen}
For every suitably small $\eta = \eta(c) > 0$, there is $\eps=\eps(c,\delta,\eta)>0$ with the following property: 
the probability that there is a vertex set $S$ simultaneously satisfying 
\begin{enumerate}[label=\emph{(\alph*)}]
    \item $|S| = k\in [d^{51},n]$,
    \item each connected component in $G_2[S]$ has order at least $d^{10}$,
    \item $G_1$ contains no edge between $S$ and $V(Q^d)\setminus S$,
    \item $|V(\cM_{[0,2)}^-)\cap S|\ge (1-\eta)k$
\end{enumerate}
is at most $\exp(-\eps k)$.
\end{lemma}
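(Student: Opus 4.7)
The plan is to follow the strategy of \Cref{l: odd size} (sparsification and tree enumeration) but, crucially, to combine it with condition (b) via the independence of $Q^d_{p_2}$ from $G_1$ and the subcriticality of $Q^d_{p_2}$ (since $p_2d\le \delta < 1$).

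Given a candidate $S$ satisfying (a)-(d), let $\cC$ denote the family of $G_1$-components of order less than $d^2$ contained in $S$. By (c), every $G_1$-component meeting $S$ is contained in $S$; by (d), $v(\cC)\ge (1-\eta)k$. Applying \Cref{l: sparsification} (with the size bound $d^2<d^{10}$) yields a subfamily $\cC'\subseteq \cC$ with $v(\cC')\ge \eps_0 v(\cC)$ and $e_{\mathrm{out}}(\cC')\ge (1-2\eps_0)dv(\cC')$ for a small $\eps_0=\eps_0(c)>0$. Crucially, every edge in $e_{\mathrm{out}}(\cC')$ is closed in $G_1$: such an edge exits $V(\cC')$ and either leaves $S$ entirely (closed by (c)) or joins two distinct $G_1$-components inside $S$ (closed by $G_1$-component maximality).

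I would next enumerate $\cC'$. Set $m=v(\cC')$ and $j=|\cC'|\in [m/d^2,m]$; by \Cref{l: trees} the number of configurations of $\cC'$ as vertex-disjoint rooted trees in $Q^d$ is at most $\binom{n}{j}\binom{m-1}{j-1}(\e d)^{m-j}$, and the probability that $\cC'$ is realised in $G_1$ with its $(1-2\eps_0)dm$ boundary edges closed is at most $p_1^{m-j}(1-p_1)^{(1-2\eps_0)dm}$. When the average component size is $\Omega(d)$ (i.e.\ $j\le O(m/d)$), these factors alone suffice for the desired bound $\exp(-\eps k)$, essentially as in the proof of \Cref{l: odd size}.

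The main obstacle is the regime of many very small components (in particular, singletons, of which $G_1$ typically has $\Theta(n)$), where $\binom{n}{j}$ overwhelms the $G_1$-probability alone. Here I would invoke condition (b): each $\cC'$-component has order less than $d^2<d^{10}$, yet must lie in a $G_2[S]$-component of order at least $d^{10}$. Consequently, the meta-graph on the small $G_1[S]$-components (with meta-edges given by $Q^d_{p_2}$-edges) must cluster into meta-components of total weight at least $d^{10}$ around every $\cC'$-component. Since $p_2d\le \delta<1$, this meta-process is stochastically dominated by a subcritical branching process, and by \Cref{lem:branch} the probability that the meta-cluster through a given small $G_1[S]$-component reaches total weight at least $d^{10}$ is bounded by $\exp(-\Omega(d^{10}))$. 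Decoupling such estimates across the (at most $k/d^{10}$) meta-clusters that jointly cover $V(\cC')$, together with a weighted decomposition in the spirit of \Cref{l: decomps} (with weight equal to the $G_1[S]$-component size), produces an extra factor of order $\exp(-\Omega(k))$ that cancels the enumeration term $\binom{n}{j}$ in the worst case.

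Combining the two regimes with appropriate choices $\eta=\eta(c)$ and $\eps_0=\eps_0(c,\delta)$ yields the bound $\exp(-\eps k)$ for $\eps=\eps(c,\delta,\eta)>0$. The principal technical difficulty lies in making the independence heuristic of the many-small-components regime precise -- in particular, carefully decoupling the $Q^d_{p_2}$-clusters so that the subcritical estimates can be genuinely multiplied, and balancing the resulting three exponential contributions (enumeration entropy, $G_1$-probability of closed boundary, and subcritical $Q^d_{p_2}$-probability arising from (b)) against each other.
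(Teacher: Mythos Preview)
Your overall strategy — sparsify the small $G_1$-components and enumerate — is sound, and you correctly isolate the main obstacle (many tiny components, in particular singletons). However, your proposed resolution of that obstacle has a genuine gap. You assert that the meta-process on small $G_1$-components joined by $Q^d_{p_2}$-edges is dominated by a subcritical branching process because $p_2d\le\delta<1$. But $p_2d$ bounds the expected number of $Q^d_{p_2}$-edges out of a \emph{single vertex}, not out of a meta-vertex: a $G_1$-component of size $s\le d^2$ has up to $sd$ boundary edges, so its expected $Q^d_{p_2}$-out-degree is $\le s\delta$, which for $s$ of order $d^2$ is far from subcritical. One can try to salvage this via a weight-based branching argument, but that requires controlling the size distribution of the neighbouring $G_1$-components and handling the dependence between the meta-clusters and the already-enumerated family $\cC'$; your sketch does not address either issue. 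The sentence invoking \Cref{lem:branch} is therefore not justified as written.

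The paper avoids this difficulty by reversing the order of operations. Rather than sparsifying the $G_1$-components directly, it first builds an auxiliary graph $G_{\mathrm{aux}}$ by contracting each small $G_1$-component inside $S$ to a single weighted vertex, then uses \Cref{l: decomps} on a spanning forest of $G_{\mathrm{aux}}$ to produce vertex-disjoint trees $T_1,\ldots,T_m\subseteq G_2[S]$, each of order in $[d^2,2d^5]$, jointly covering at least $k/2$ vertices, with each small $G_1$-component lying entirely inside one tree. Only then does it sparsify (now on the trees $T_i$, not on the $G_1$-components) and enumerate. The point is that the objects being counted are $G_2$-trees of size at least $d^2$, so the number $\ell$ of them satisfies $\ell\le x/d^2$, and the entropy term $\binom{n}{\ell}$ contributes only $\exp(O(x\log d/d^2))$ — negligible against $\exp(-\Omega(x))$ — regardless of how small the underlying $G_1$-components were. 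No subcriticality argument is needed: the $Q^d_{p_2}$-edges are used only implicitly, through the single factor $p^{x-\ell}$ for the tree edges in $G_2$, while the $(1-p_1)$-factor comes from the (at least $(1-2\eta)$-fraction of) tree vertices lying in $S'$, whose full $Q^d$-boundary inside and outside the tree is closed in $G_1$. Your proposal gestures at \Cref{l: decomps} but applies it too late; moving it to the very first step, on the contracted graph, is what makes the argument go through cleanly.
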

\begin{proof}
Fix $k\in [d^{51},n]$ and suppose that a set $S$ satisfies each of the properties listed above. 
Set $S'=V(\cM_{[0,2)}^-)\cap S$.
The following claim is a key part of the proof.

\begin{claim}\label{claim: no sudden collapse help}
There are $m\ge 1$ and vertex-disjoint trees $T_1,\ldots, T_m\subseteq G_2[S]$ such that, for every $i\in [m]$, $T_i$ satisfies each of the following properties:
\begin{enumerate}[label=\emph{(\arabic*)}]
    \item $v(T_i)\in [d^2,2d^5]$,
    \item $|S'\cap V(T_i)|\ge (1-2\eta)v(T_i)$,\label{i: intersection in collapse}
    \item $|\bigcup_{i\in [m]} V(T_i)|\ge k/2$,
    \item for every component $C$ in $G_1[S']$ and every $i\in [m]$, either $V(C)\subseteq V(T_i)$ or $V(C)\cap V(T_i)=\varnothing$. \label{i: components in collapse}
\end{enumerate}
\end{claim}
\begin{proof}
By assumption (c), the vertex set of every connected component in $G_1$ is either contained in $S$ or disjoint from $S$.
Based on this observation, we construct an auxiliary graph $G_{\mathrm{aux}}$ as follows: starting from $G_2[S]$, contract every component in $\cM_{[0,2)}^-$ contained in $S$ to a single vertex, delete loops, and identify multiple edges.
To every vertex in $G_{\mathrm{aux}}$, associate a weight equal to the number of vertices it corresponds to before the contraction of $G_2[S]$. 
Hence, $G_{\mathrm{aux}}$ has vertex-weights in the interval $[1,d^2]$, maximal degree at most $d^2\cdot d = d^3$ and, by (b), every connected component has total weight at least $d^{10}$.

Fix a spanning forest $F_{\mathrm{aux}}$ where every tree spans a different component of $G_{\mathrm{aux}}$.
Then, by applying \Cref{l: decomps} for each tree in $F_{\mathrm{aux}}$, we can find a spanning subforest $F_{\mathrm{aux}}'\subseteq F_{\mathrm{aux}}$ where every tree has weight in the interval $[d^2,(d^3+1)d^2]\subseteq [d^2,2d^5]$. 
Note that the forest $F_{\mathrm{aux}}'$ in $G_{\mathrm{aux}}$ corresponds to a spanning forest $F$ in $G_2[S]$ where every tree contains between $d^2$ and $2d^5$ vertices, and, by construction, for every component $C$ in $G_1[S']$, every tree in $F$ either contains $C$ or does not intersect with $C$.

Finally, suppose towards contradiction that more than $k/2$ vertices belong to trees $T\subseteq F$ with the property $|S'\cap V(T)| < (1-2\eta)v(T)$.
Then, we must have $|S'| < (1-2\eta)(k/2) + (k/2) = (1-\eta)k$, contradicting (d).
Hence, denoting by $T_1,\ldots,T_m$ the trees $T\subseteq F$ with $|S'\cap V(T)| \ge (1-2\eta)v(T)$ provides a collection satisfying each of the properties (1)--(4).
\end{proof}

We now consider trees $T_1, \ldots, T_m$ as given by \Cref{claim: no sudden collapse help}. 
Since those trees are vertex-disjoint, each of them has order at most $2d^5$ and $|\bigcup_{i\in [m]} V(T_i)|\ge k/2>d^{50}$, \Cref{l: sparsification} applies. 
Thus, for any sufficiently small constant $\eps'>0$, there is a family $\cT\coloneqq \{T_1',\ldots, T_{\ell}'\}\subseteq \{T_1,\ldots, T_m\}$ with $v(\cT)\ge \eps' k/2$ and $e_{\mathrm{out}}(\cT)\ge (1-2\eps')dv(\cT)$. 
We denote this event by $\cE = \cE(\eps')$ and estimate its probability in the remainder of the proof.

Fix integers $x_1,\ldots, x_{\ell}\in [d^2,2d^5]$ with $x\coloneqq \sum_{i=1}^{\ell} x_i\in [\eps' k/2, n]$. By Lemma~\ref{l: trees}, the number of forests in $Q^d$ with $\ell$ trees of orders $x_1,\ldots, x_{\ell}$ is bounded from above by $\binom{n}{\ell}(\e d)^{x-\ell}$.
Assume the trees $T_1',\ldots, T_{\ell}'$ have been chosen.
By Claim \ref{claim: no sudden collapse help}\ref{i: intersection in collapse}, for the tree $T_i'$ of order $x_i$, the number of ways to specify a subset of $(1-2\eta x_i)$ vertices in $V(T_i')$ that belong to $V(\cM_2^-)$ is bounded from above by $\tbinom{x_i}{2\eta x_i}\le (\e/2\eta)^{2\eta x_i}$.
Note that by \Cref{claim: no sudden collapse help}\ref{i: components in collapse}, every edge between $S'\cap V(T_i')$ and $V(Q^d)\setminus (S'\cap V(T_i'))$ is not present in $G_1$ for all distinct $i,j\in [\ell]$. Thus, by \Cref{claim: no sudden collapse help}\ref{i: intersection in collapse}, there are at least $e_{\mathrm{out}}(\cT)-2\eta v(\cT)d=e_{\mathrm{out}}(\cT)-2\eta xd$ closed edges in $E(V(\cT),V(Q^d)\setminus V(\cT))$.
Hence, the probability that each edge in such a forest is in $G_2$ but no edge adjacent to $S'$ on its boundary is in $G_1$ is at most $p^{x-\ell}(1-p_1)^{(1-2\eps')xd-2\eta xd}$. 
Thus, by the union bound,
\begin{align}
    \mathbb{P}\left(\cE\right)&\le \sum_{x=\eps' k/2}^{n} \sum_{\substack{x_1+\cdots+x_{\ell}=x\\x_1,\ldots,x_{\ell}\in[d^{2},2d^{5}]}}\binom{n}{\ell} \bigg(\prod_{i=1}^{\ell} \bigg(\frac{\e}{2\eta}\bigg)^{2\eta x_i}\bigg) (\e dp)^{x-\ell}(1-p_1)^{(1-2\eps'-2\eta)xd}\nonumber\\
    &\le \sum_{x=\eps' k/2}^{n}\sum_{\substack{x_1+\cdots+x_{\ell}=x\\x_1,\ldots,x_{\ell}\in[d^{2},2d^{5}]}}\left(\frac{\e n}{x/d^2}\right)^{x/d^2}\exp(x\left(\log(\e c)+2\eta\log(\e/2\eta)-(1-2\eps'-2\eta)(c-\delta)\right)),\label{eq:largeUB2}
\end{align}
where we used that $\ell\le x/d^2$. 
To simplify~\eqref{eq:largeUB2}, observe that there exist at most $(2d^5)^{x/d^2}$ ways to choose $x_1,\ldots, x_{\ell}\in[d^2,2d^5]$ with $\sum_{i=1}^{\ell}x_i=x$.
Furthermore, by choosing $\delta,\eps',\eta$ suitably small with respect to $c$ and using that $c>1$, we have that $c_0 := (1-2\eps'-2\eta)(c-\delta) - 2\eta\log(\e/2\eta) - \log(\e c) > 0$.
Thus,~\eqref{eq:largeUB2} is at most
\begin{align*}
\sum_{x=\eps' k/2}^{n}\left(\frac{2\e nd^7}{x}\right)^{x/d^2}\e^{-c_0x}\le \sum_{x=\eps' k/2}^{n}\exp\bigg(x\left(\frac{1}{d^2}\log\left(\frac{2\e nd^7}{x}\right)-c_0\right)\bigg).
\end{align*}
Using that $\log(2\e nd^7/x)=o(d^2)$, we obtain
\begin{align*}
\mathbb P(\cE)\le \sum_{x=\eps' k/2}^n\exp(-xc_0/2)\le n \exp(-\eps'c_0k/4).
\end{align*}
Thus, setting $\eps = \eps' c_0/5$ finishes the proof.
\end{proof}

Next, we state a useful (and immediate) corollary of \Cref{l: no surprises from outside_gen}.
Define $\cS$ to be the subset of $\cM_{10}$ which does not intersect with any component in $\cM_2^-$.

\begin{corollary}\label{l: no surprises from outside}
Given $\eps = \eps(c,\delta)$ as in \Cref{l: no surprises from outside_gen}, we have $\mathbb{P}(v(\cS)\ge d^{-1/2} n)\le \exp(-\eps d^{-1/2} n)$.
\end{corollary}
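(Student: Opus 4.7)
The plan is to deduce \Cref{l: no surprises from outside} essentially by direct application of \Cref{l: no surprises from outside_gen} to the (random) set $S = V(\cS)$, followed by a union bound over the possible values of $k = |S|$. The key work is just to verify that $V(\cS)$ meets hypotheses (a)--(d) of the more general lemma.

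First I would check the four hypotheses. For (a), it suffices to restrict attention to $k \in [d^{-1/2}n, n]$ since we only want to bound the probability that $|V(\cS)| \geq d^{-1/2}n$, and certainly $d^{-1/2}n \gg d^{51}$. For (b), by definition $\cS \subseteq \cM_{10}$, so every $G_2$-connected component inside $V(\cS)$ has order at least $d^{10}$. For (c), the crucial observation is that $G_1 \subseteq G_2$, hence every connected component of $G_1$ is contained in a single component of $G_2$; since $V(\cS)$ is, by definition, a disjoint union of whole $G_2$-components, no edge of $G_1$ can cross the cut $(V(\cS), V(Q^d)\setminus V(\cS))$. For (d), by the definition of $\cS$ no vertex of $V(\cS)$ belongs to any component in $\cM_2^-$, so
\[
V(\cS) \;\subseteq\; V(Q^d)\setminus V(\cM_2^-) \;=\; V(\cM_{[0,2)}^-),
\]
which gives $|V(\cM_{[0,2)}^-)\cap V(\cS)| = k$, so (d) holds for any choice of $\eta$ (in particular, for the small $\eta = \eta(c)$ required by \Cref{l: no surprises from outside_gen}).

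Next I would apply \Cref{l: no surprises from outside_gen}: for each $k \in [d^{-1/2}n, n]$, the probability that some $S$ of size $k$ satisfies (a)--(d) simultaneously is at most $\exp(-\eps k)$, and in particular this bounds the probability that $|V(\cS)| = k$. Summing over $k$ gives
\[
\mathbb{P}\bigl(v(\cS) \geq d^{-1/2}n\bigr) \;\leq\; \sum_{k = d^{-1/2}n}^{n} \exp(-\eps k) \;\leq\; \frac{\exp(-\eps d^{-1/2}n)}{1 - \exp(-\eps)}.
\]
The polynomial prefactor (or the geometric-series denominator) is then absorbed into the exponent at the cost of slightly decreasing $\eps$, yielding the desired bound $\exp(-\eps d^{-1/2}n)$ after relabelling.

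Nothing in this corollary looks genuinely difficult: \Cref{l: no surprises from outside_gen} was formulated precisely so that its hypotheses are free for $S = V(\cS)$. The only point requiring a moment's thought is (c), where one has to remember that $V(\cS)$ is a union of $G_2$-components (not merely of vertices with some property), so that the containment $G_1 \subseteq G_2$ automatically rules out $G_1$-edges across its boundary. There is no real obstacle; the corollary is a clean consequence of the general stability principle.
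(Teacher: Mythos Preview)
Your proposal is correct and follows essentially the same approach as the paper: verify that $S=V(\cS)$ satisfies hypotheses (a)--(d) of \Cref{l: no surprises from outside_gen} and apply it. The paper's proof is even terser and does not spell out the union bound over $k$; your version is slightly more careful in making that step and the absorption of the resulting constant explicit.
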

\begin{proof}
By definition, we have that each component in $G_2[V(\cS)]$ has order at least $d^{10}$ and $G_1$ has no edge between $V(\cS)$ and $V(Q^d)\setminus V(\cS)$. Further, $|V(\cM_{[0,2)})\cap V(\cS)|=v(\cS)$. Therefore, we may apply Lemma \ref{l: no surprises from outside_gen} to the set $\cS$ and obtain the required bound.
\end{proof}

Before moving further, define $G_1^+\coloneqq G_1\cup G_2[V(\cM_{[0,2)}^-)]\cup G_2[V(\cM_{[0,2)}^-),V(\cM_2^-)]$, that is, the graph $G_1^+$ contains all edges of $G_1$ and all edges of $G_2$ incident to components of order less than $d^2$ in $G_1$. 
For a vertex $v\in V(\cM_2^-)$, we denote by $S_v\subseteq V(\cM_{[0,2)}^-)$ the set of vertices in components $C$ in $G_2[V(\cM_{[0,2)}^-)]$ where $C$ is adjacent to $v$ in $G_1^+$. 
Given a set $\cC\subseteq \cM_2^-$, we define $S_{\cC} = S_{V(\cC)} \coloneqq \bigcup_{v\in V(\cC)} S_v$.

Note that, when moving from $G_1$ to $G_2$, the expected order of the giant component increases (in fact, by $\Theta_c(\delta n)$). 
The next lemma says no family of components $\cC\subseteq \cM_2^-$ connects via $G_2$-paths to too many vertices in $G_2\setminus V(\cM_2^-\setminus \cC)$ which themselves may connect to $\cM_2^-\setminus \cC$ only through $\cC$. 
In practice, this means that the giant component in $G_1$ is incremented by well-spread connected pieces to form the giant component of $G_2$.
The following (rather crude) quantification of this statement will suffice for us. 
\begin{lemma}\label{l: no explosions}
There are $K=K(c,\delta), \eps = \eps(c,\delta)>0$ such that the probability that there is a family $\cC\subseteq \cM_2^-$ with $|S_{\cC}|\ge K\max\{v(\cC),d^{-1/2}n\}$ and $E(Q^d_{p_2}[S_{\cC},V(\cM_2^-\setminus \cC)])=\varnothing$ is at most $\exp(-\eps d^{-1/2} n)$.
\end{lemma}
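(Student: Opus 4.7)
The plan is to reduce the problem to a mild variant of \Cref{l: no surprises from outside_gen} applied to the set $S\coloneqq V(\cC)\cup S_\cC$. First, I perform a structural check of $S$. Since $V(\cC)$ is a union of $\cM_2^-$-components and $S_\cC$ is a union of $G_2[V(\cM_{[0,2)}^-)]$-components — each of which is itself a union of $\cM_{[0,2)}^-$-components — the set $S$ is a union of $G_1$-components of $Q^d$, which immediately yields condition (c) of \Cref{l: no surprises from outside_gen}. Next, $V(\cM_{[0,2)}^-)\cap S=S_\cC$ and the assumption $|S_\cC|\ge Kv(\cC)$ together imply $|S_\cC|/|S|\ge K/(K+1)\ge 1-\eta$ provided $K\ge(1-\eta)/\eta$, which gives condition (d); and $|S|\ge|S_\cC|\ge Kd^{-1/2}n\ge d^{51}$ for $d$ large, which gives condition (a). Finally, each $G_2[V(\cM_{[0,2)}^-)]$-component contributing to $S_\cC$ is by definition $G_2$-adjacent to some $V(\cC)$-vertex, so every connected component of $G_2[S]$ intersects $V(\cC)$ and therefore contains a whole member of $\cC\subseteq\cM_2^-$ of order at least $d^2$. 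This is the only point where a deviation from \Cref{l: no surprises from outside_gen} occurs: condition (b) there demands order at least $d^{10}$, whereas here we only have order at least $d^2$. Call this weaker variant (b').

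Second, I re-run the proof of \Cref{l: no surprises from outside_gen} under (b'). Condition (b) is used only to invoke \Cref{l: decomps} with threshold $m_0=d^2$ on the auxiliary graph $G_{\mathrm{aux}}$ obtained by contracting the $\cM_{[0,2)}^-$-components of $S$, guaranteeing that each component of $G_{\mathrm{aux}}$ has total weight at least $m_0$. Condition (b') provides exactly this bound, and the resulting decomposition yields vertex-disjoint trees of orders in $[d^2,2d^5]$, exactly as in the original proof. All subsequent steps — the density computation in \Cref{claim: no sudden collapse help}, the sparsification via \Cref{l: sparsification}, and the tree-counting via \Cref{l: trees} — are unchanged and deliver $\mathbb{P}(\text{event})\le\exp(-\eps_1|S|)$ for some constant $\eps_1=\eps_1(c,\delta)>0$, where $\eta=\eta(c)$ is fixed as in \Cref{l: no surprises from outside_gen}.

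To conclude, choosing $K=K(c,\delta)$ large enough to satisfy $K\ge(1-\eta)/\eta$ and using $|S|\ge Kd^{-1/2}n$, we obtain $\mathbb{P}(\text{event})\le\exp(-\eps_1 Kd^{-1/2}n)$, giving the desired bound with $\eps\coloneqq\eps_1K$. The main obstacle is to verify that the tree-counting step in \Cref{l: no surprises from outside_gen} remains tight under the weaker condition (b'); this works because the per-vertex exponential decay rate $c-\log(\e c)-O(\eta,\delta)>0$ easily dominates the $\binom{n}{\ell}$-factor with $\ell\le x/d^2$, which only contributes $\exp(O(x/d))$ to the log-probability.
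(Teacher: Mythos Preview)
Your argument is correct and constitutes a genuinely different proof from the paper's.

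The paper's route is as follows: it conditions on the well-spreadness event from \Cref{l: large components are well spread} (so that all but at most $d^{-1/2}n$ vertices have at least $\eps_1 d$ neighbours in $V(\cM_2^-)$), then, for each fixed $\cC$, observes that $S_{\cC}$ must therefore send at least $\eps_1 ds/2$ edges of $Q^d$ into $V(\cM_2^-\setminus\cC)$. The hypothesis $E(Q^d_{p_2}[S_{\cC},V(\cM_2^-\setminus\cC)])=\varnothing$ is then used directly: the probability that none of these edges appears in $Q^d_{p_2}$ is at most $(1-p_2)^{\eps_1 ds/2}$. A union bound over the at most $n\e^{k/d}$ choices of $\cC$ with $v(\cC)=k$ finishes the argument.

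Your route instead packages $S=V(\cC)\cup S_{\cC}$ and feeds it back into the stability principle (\Cref{l: no surprises from outside_gen}), after checking that (b) can be weakened to order $\ge d^2$ without loss --- which is true, since (b) is only used to guarantee the hypothesis $w(T)\ge m_0=d^2$ in the application of \Cref{l: decomps}. Two points are worth noting. First, your argument never uses the hypothesis $E(Q^d_{p_2}[S_{\cC},V(\cM_2^-\setminus\cC)])=\varnothing$, so you in fact prove a strictly stronger statement: with probability at least $1-\exp(-\eps d^{-1/2}n)$, no family $\cC\subseteq\cM_2^-$ satisfies $|S_{\cC}|\ge K\max\{v(\cC),d^{-1/2}n\}$ at all. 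Second, your approach bypasses \Cref{l: large components are well spread} entirely, replacing it with the tree-counting machinery of \Cref{l: no surprises from outside_gen}; this gives a more self-contained derivation and reinforces the central role of the stability principle in the paper's toolkit. The paper's proof, in contrast, is shorter given \Cref{l: large components are well spread} and makes the role of the sprinkling edges more transparent. One minor omission in your write-up: the modified \Cref{l: no surprises from outside_gen} yields $\exp(-\eps_1 k)$ for each fixed $k=|S|$, so a union bound over $k\ge Kd^{-1/2}n$ is needed before concluding; this costs only a harmless factor of $n$.
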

\begin{proof}
By Lemma \ref{l: large components are well spread}, there are constants $\eps_1 = \eps_1(c-\delta,\delta)>0$ and $\nu=\nu(c-\delta,\delta)\in (0,1)$ such that, with probability at least $1-\e^{-\nu d^{-1/2} n}$, there are at most $d^{-1/2}n$ vertices in $Q^d$ which have less than $\eps_1 d$ neighbours in $V(\cM_2^-)$. We expose the graph $G_1$ and assume that the said event holds. 

Now, fix a family $\cC\subseteq \cM_2^-$ and denote $s:=K\max\{v(\cC),d^{-1/2}n\}$ with $K$ suitably large. Then, we reveal $Q^d_{p_2}[V(\cM_{[0,2)}^-)]$ and $Q^d_{p_2}[V(\cC),V(\cM_{[0,2)}^-)]$, thus discovering $S_{\cC}$. 
Suppose that $|S_{\cC}|\ge s$. By assumption, there are at least $\eps_1 d(s-d^{-1/2}n)$ edges of $Q^d$ between $S_{\cC}$ and $V(\cM_2^-)$, and therefore
\begin{align*}
    e_{Q^d}(S_{\cC}, V(\cM_2^-\setminus\cC)) = e_{Q^d}(S_{\cC}, V(\cM_2^-))-e_{Q^d}(S_{\cC}, V(\cC))\ge \eps_1 d(s-d^{-1/2}n)-dv(\cC)>\eps_1 ds/2.
\end{align*}
Using that $dp_2\ge \delta$, the probability that none of the edges with one endpoint in $S_{\cC}$ and the other endpoint in $V(\cM_2^-\setminus\cC)$ appears in $Q^d_{p_2}$ is at most $(1-p_2)^{\eps_1 ds/2}\le \exp(-\eps_1 \delta s/2)$. 
However, the number of ways to choose a family~$\cC\subseteq \cM_2^-$ with $k=v(\cC)$ is at most
\[\sum_{i=0}^{k/d^2} \binom{n/d^2}{i}\le n \bigg(\frac{n}{d^2}\bigg)^{k/d^2}\le n \e^{k/d},\]
where we used that there are at most $n/d^2$ components in $\cM_2^-$ and there are at most $k/d^2$ components in $\mathcal{C}$.
Thus, the probability that a family $\cC$ violating the statement of the lemma exists is at most 
\begin{align*}
    \e^{-\nu d^{-1/2}n}+\sum_{k=1}^{n}n\exp(k/d-K \eps_1 \delta \max\{k,d^{-1/2}n\}/2).
\end{align*}
Choosing $K \ge 5/(\eps_1 \delta)$ and setting $\eps=\min\{\nu,K\eps_1\delta/2\}/2$ completes the proof. 
\end{proof}

\subsection{Proof of the lower tail bound of Theorem \ref{th: large tail deviation}}\label{subsec: lower bound proof}
Fix $K$ as in the statement of Lemma~\ref{l: no explosions} and $\eps_0 := (10K)^{-1}$.
Furthermore, fix $\eps_1>0$, $\gamma=\gamma(\eps_1)\in (0,1]$, and $t=t(d)\in [n^{1-\gamma}/\eps_0,n]$ satisfying the assumptions of \Cref{l: large components are well spread}. We note that we begin with considering such values of $t$ as it will be useful for us in Section \ref{s: mixing time}, however, later in the proof we will restrict our attention to the interval $[n/d^{0.1},n]$.
Given $t_1\in [\varepsilon_0 t,n/2]$, denote by $\cE_1(t_1)$ the event that there are more than $r=r(t_1):=t_1 \log_2(n/t_1)/(2d)$ $\eps_1$-bad vertices in $Q^d$, that is, vertices with less than $\eps_1 d$ neighbours in $V(\cM_2^-)$.
By Lemma~\ref{l: large components are well spread}, for $\eps_1$ suitably small, 
\begin{equation}
\label{eq:E_1_t_1}
\mathbb P(\cE_1(t_1))\le \e^{-\nu r}
\end{equation}
for some $\nu=\nu(c-\delta)>0$.

Given $t_1\in [\varepsilon_0 t,n/2]$, denote by $\cE_2(t_1)$ the event that 
there exists a component-respecting partition of $V(\cM_2^-)$ into two parts $A,B$ with $t_1=|A|\le |B|$ and such that, 
after sprinkling with probability $p_2$, there are no paths between $A$ and $B$ in $G_2$. Define $\cE_2:=\bigcup_{t_1\in [\eps_0 t,n/2]} \cE_2(t_1)$.

An important part of the proof consists in estimating the probability of the event $\cE_2$.
To this end, we fix a graph $H$ satisfying the event $\mathcal{E}_1(t_1)^c$ for some $t_1\in [\eps_0t,n/2]$ and assume $G_1=H$. 
Then, we consider a partition $A,B$ as above with $t_1=|A|\leq |B|$. We bound from above the probability of the event $\mathcal{E}_2(A,B)$ that there are no paths between $A$ and $B$ in $G_2$. 
We then use the latter probability to bound $\mathbb{P}(\mathcal{E}_2)$ via the union bound:
\begin{align}
\label{eq:E_2-union_bound}
 \mathbb{P}(\mathcal{E}_2)&\leq\sum_{t_1\in [\eps_0t,n/2]}\mathbb{P}(\mathcal{E}_2(t_1))\leq
 \sum_{t_1\in [\eps_0t,n/2]}\biggl(\mathbb{P}(\mathcal{E}_2(t_1)\mid\cE_1(t_1)^c)+\mathbb{P}(\cE_1(t_1))\biggr)\notag\\ 
 &\leq\sum_{t_1\in [\eps_0t,n/2]}\left(\mathbb{P}(\cE_1(t_1))+\max_{H\in\cE_1(t_1)^c}\sum_{A\sqcup B=\cM_2^-}\mathbb{P}(\mathcal{E}_2(A,B)\mid G_1=H)\right).
\end{align}

\begin{lemma}\label{lem:estimate_E2}
There is a constant $\eps_2 = \eps_2(c,\delta)>0$ such that $\mathbb P(\cE_2)\le \exp(-\eps_2 t\log(n/t)/d)$.
\end{lemma}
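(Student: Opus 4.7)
The plan is to bound the two summands in~\eqref{eq:E_2-union_bound} separately. For the first, $\sum_{t_1\in[\eps_0 t,n/2]}\mathbb{P}(\cE_1(t_1))$, I use~\eqref{eq:E_1_t_1}: since $r(t_1)=t_1\log_2(n/t_1)/(2d)$ is bounded below on $[\eps_0 t,n/2]$ by a constant multiple of $t\log_2(n/t)/d$ (using unimodality of $x\mapsto x\log_2(n/x)$ and the fact that its minimum over any subinterval is attained at an endpoint), summing over $t_1$ yields a contribution of the form $\exp(-\Theta_{c,\delta}(t\log_2(n/t)/d))$.

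For the second summand, fix $t_1$, $H\in\cE_1(t_1)^c$, and a component-respecting partition $(A,B)$ of $V(\cM_2^-)$ with $|A|=t_1\le|B|$. Since the partition respects $H$-components, $H$ contains no edge between $A$ and $B$, so the event $\cE_2(A,B)\cap\{G_1=H\}$ forces every $Q^d$-edge between $A$ and $B$ to be absent in $Q^d_{p_2}$, yielding
\[
\mathbb{P}(\cE_2(A,B)\mid G_1=H)\le(1-p_2)^{e_{Q^d}(A,B)}\le\exp\bigl(-p_2\,e_{Q^d}(A,B)\bigr).
\]
The number of component-respecting partitions with $|A|=t_1$ is bounded by $(\e n/t_1)^{t_1/d^2}$ (each of the at most $n/d^2$ components of $\cM_2^-$ has at least $d^2$ vertices, so $A$ is a union of at most $t_1/d^2$ of them), whose logarithm $O(t_1\log_2(n/t_1)/d^2)$ will be negligible compared to the target exponent of order $t_1\log_2(n/t_1)/d$.

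The crux of the argument is the lower bound $e_{Q^d}(A,B)\gtrsim_{c,\delta}t_1\log_2(n/t_1)$. Harper's edge-isoperimetric inequality (\Cref{l: Harper}) gives $e_{Q^d}(A,V(Q^d)\setminus A)\ge t_1\log_2(n/t_1)$. Since this boundary decomposes as edges into $B$ plus edges leaking out of $V(\cM_2^-)$, and the $\eps_1$-bad condition $|\vbad(\eps_1)\cap A|\le r(t_1)$ bounds the leakage by $(1-\eps_1)dt_1+\eps_1 d\,r(t_1)$, the desired estimate follows directly in the microscopic regime $t_1\le n^{1-\Omega_c(1)}$, where $\log_2(n/t_1)$ dominates the leakage term. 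In the complementary macroscopic regime $t_1=\Theta(n)$, Harper's bound reduces to $\Omega_c(t_1)$ (using $|V(\cM_2^-)|\le y(c-\delta)n$ and hence $\log_2(n/t_1)\ge\log_2(2/y(c-\delta))>0$), while the naive leakage estimate is $O(dt_1)$, so a subtler argument is required.

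In this macroscopic regime I expect to appeal to the stability principle (\Cref{l: no surprises from outside_gen}) together with \Cref{l: no explosions}: if $e_{Q^d}(A,B)$ were too small, the $G_2$-connected component containing $A$ would remain disjoint from $B$ and would be forced (via $p_2$-edges) to grow by absorbing mostly small $H$-components lying outside $V(\cM_2^-)$. Such a configuration corresponds to a large set satisfying the hypotheses of \Cref{l: no surprises from outside_gen}, and is thus ruled out with probability below the required bound. Combining the two regimes, a union bound over partitions and over $t_1\in[\eps_0 t,n/2]$ yields $\mathbb{P}(\cE_2)\le\exp(-\eps_2\,t\log_2(n/t)/d)$ with $\eps_2=\Omega_{c,\delta}(1)$. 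The main obstacle is making the stability step quantitatively tight enough in the macroscopic regime to beat the naive leakage bound; the microscopic regime is a routine application of Harper's inequality.
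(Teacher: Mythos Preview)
Your proposal has a genuine gap in the core estimate. You bound $\mathbb{P}(\cE_2(A,B)\mid G_1=H)\le (1-p_2)^{e_{Q^d}(A,B)}$ and then try to lower-bound $e_{Q^d}(A,B)$ by Harper's inequality minus the leakage out of $V(\cM_2^-)$. Your leakage bound is $(1-\eps_1)dt_1+\eps_1 d\,r(t_1)$, but Harper only gives $t_1\log_2(n/t_1)\le dt_1$, so the subtraction is negative whenever $\log_2(n/t_1)<(1-\eps_1)d$, i.e., whenever $t_1>n^{\eps_1}$. Since $t_1\ge\eps_0 t\ge n^{1-\gamma}$ with $\gamma$ small, this holds for \emph{every} $t_1$ in the sum; your ``microscopic regime'' is empty. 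The difficulty is real: $A$ and $B$ can genuinely have few or no direct $Q^d$-edges between them, with most of $\partial A$ going into $V(Q^d)\setminus V(\cM_2^-)$.

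The paper's fix is to enlarge the sets: define $A_1$ (resp.\ $B_1$) as the vertices outside $V(\cM_2^-)$ with at least $\eps_1 d/2$ $Q^d$-neighbours in $A$ (resp.\ in $B$, among those not in $A_1$), and set $A'=A\cup A_1$, $B'=B\cup B_1$. Every non-bad vertex outside $V(\cM_2^-)$ lies in $A_1\cup B_1$, so the boundary of $A'$ goes either into $B'$ or into the $\le r$ bad vertices, yielding $e_{Q^d}(A',B')\ge t_1\log_2(n/t_1)/2$. However, a $p_2$-edge between $A_1$ and $B_1$ does not by itself connect $A$ to $B$ in $G_2$ (vertices in $A_1,B_1$ lie in small $G_1$-components, so have no $G_1$-edge into $A$ or $B$). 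The paper therefore first extracts a large $p_2$-matching in $E_{Q^d}(A',B')$ via \Cref{l: matchings}, splits into four cases according to which of $A,A_1,B,B_1$ the matching lives between, and in each case uses a further independent round of $p_2$-edges (from $A_1$ to $A$ and/or $B_1$ to $B$, exploiting the $\eps_1 d/2$ available neighbours) to complete paths from $A$ to $B$. Your proposed workaround via \Cref{l: no surprises from outside_gen} and \Cref{l: no explosions} does not produce the needed $\exp(-\Omega(t_1\log(n/t_1)/d))$ bound; those lemmas give $\exp(-\Omega(k))$ and $\exp(-\Omega(n/d^{1/2}))$ respectively, neither of which matches the target for general $t_1$.
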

\begin{proof}
Fix $A,B$ and $|A|=t_1$ as above. Denote by $A_1$ the set of vertices in $Q^d$ outside $V(\cM_2^-)$ with at least $\eps_1d/2$ neighbours in $A$, and denote by $B_1$ the set of vertices in $Q^d$ outside $V(\cM_2^-)\cup A_1$ with at least $\eps_1d/2$ neighbours in $B$. 
We further set $A'=A\cup A_1$ and $B'=B\cup B_1$. Now, by Lemma \ref{l: Harper},
$$
e_{Q^d}(A',V(Q^d)\setminus A')\ge|A'|(d-\log_2|A'|)\ge t_1\log_2(n/t_1),
$$
where we note that $|A'|\ge t_1$ and $|V(Q^d)\setminus A'|\ge |B'|\ge |A|=t_1$.

Note that the second endpoint of every edge from $A'$ to $V(Q^d\setminus A')$ is either $\eps_1$-bad or in $B'$.
Therefore,
\begin{align}\label{eq:bipartite}
e_{Q^d}(A',B')\ge t_1\log_2(n/t_1)-d\cdot r \ge t_1\log_2(n/t_1)/2 \ge \varepsilon_0 t\log_2(n/(\varepsilon_0 t))/2,
\end{align}
where the last inequality follows by simple analysis of the function $x\mapsto x\log_2(n/x)$ and the fact that $\eps_0\le 0.1$.

Let $F_0\coloneqq E_{Q^d}(A',B')$. Then, by Lemma~\ref{l: matchings}, there is a constant $\alpha\coloneqq \alpha(\delta,\eps_0)>0$ such that the probability that the size of a maximum matching in $(F_0)_{p_2}$ is less than $\alpha t_1\log_2(n/t_1)/2d$ is at most $\exp(-\alpha t_1\log_2(n/t_1)/2d)$.
Now, set $m=m(t_1):=\alpha t_1\log_2(n/t_1)/8d$ and define the events 
\begin{align*}
\cE_3 &:= \text{$Q^d_{p_2}$ contains a matching of size $m$ between $A_1$ and $B_1$,}\\
\cE_4 &:= \text{$Q^d_{p_2}$ contains a matching of size $m$ between $A_1$ and $B$,}\\
\cE_5 &:= \text{$Q^d_{p_2}$ contains a matching of size $m$ between $A$ and $B_1$,}\\
\cE_6 &:= \text{$Q^d_{p_2}$ contains a matching of size $m$ between $A$ and $B$.}
\end{align*}
In particular, $\mathbb P(\cE_3\cup\cE_4\cup\cE_5\cup\cE_6\mid G_1=H)\ge 1-\e^{-4m}$.
We separately estimate the probability of $\cE_2(A,B)$ conditionally on $\{G_1=H\}\cap \cE_i$ for each $i\in [3,6]$.

\begin{claim}\label{cl:E3-E6}
For every $i\in [3,6]$, $\mathbb P(\cE_2(A,B)\mid \{G_1=H\}\cap \cE_i)\le (1-\delta^2\eps^2_1/5)^m$.
\end{claim}
\begin{proof}[Proof of \Cref{cl:E3-E6}]
We start with the case $i=3$. Expose $Q^d_{p_2}[A_1,B_1]$ and denote by $M$ a matching of size $m$ therein.
Then, by definition of $A_1,B_1$ and using that the edges between $A$ and $A_1$, and between $B$ and $B_1$ have not yet been revealed in $Q^d_{p_2}$, for every edge $uv\in M$, the probability that $u$ is adjacent to $A$ in $Q^d_{p_2}$ and $v$ is adjacent to $B$ in $Q^d_{p_2}$ is bounded from below by 
$$
(1-(1-p_2)^{\eps_1 d/2})^2 \ge (1-\e^{-\eps_1 p_2d/2})^2 \ge (1-\e^{-\delta \eps_1/2})^2\ge \delta^2 \eps^2_1/5,
$$
where we used that $p_2d\ge \delta$ and $\delta = \delta(c)$ is suitably small.
The independence of the latter events shows the bound for $i=3$.

We turn to the case $i=4$. Expose $Q^d_{p_2}[A_1,B]$ and denote by $M$ a matching of size $m$ therein.
Then, by definition of $A_1$ and using that the edges between $A$ and $A_1$ in $Q^d_{p_2}$ have not yet been revealed, for every edge $uv\in M$ with $v\in A_1$, the probability $v$ is adjacent to $A$ in $Q^d_{p_2}$ is bounded from below by 
$$
1-(1-p_2)^{\eps_1 d/2}\ge 1-\e^{-\delta \eps_1/2}\ge \delta \eps_1/3 \ge \delta^2 \eps^2_1/5.
$$
Again, the independence of the latter events shows the bound for $i=4$.

The inequality $\mathbb P(\cE_2(A,B)\mid \{G_1=H\}\cap \cE_5)\le (1-\delta^2 \eps^2_1/5)^m$ follows as in the case $i=4$. 
Finally, we trivially have $\mathbb P(\cE_2(A,B)\mid \{G_1=H\}\cap \cE_6)=0$ from the definitions of $\cE_2(A,B)$ and $\cE_6$.
\end{proof}

Note that
\begin{align}\label{eq: helping}
\mathbb P(\cE_2(A,B)\mid G_1=H) & \le \mathbb P((\cE_3\cup \cE_4\cup \cE_5\cup \cE_6)^c\mid G_1=H) + \sum_{i=3}^6 \mathbb P(\cE_2(A,B)\mid \cE_i\cap  \{G_1=H\})\notag\\
&\le \e^{-4m(t_1)}+4(1-\delta^2\eps^2_1/5)^{m(t_1)}\le \exp\left(-\delta^2\eps^2_1m(t_1)/6\right),
\end{align}
where the penultimate inequality follows from Claim \ref{cl:E3-E6}. 

We now estimate the maximum in \eqref{eq:E_2-union_bound}. The number of choices of $A$ (and $B$) given $t_1$ is at most 
\begin{align*}
    \sum_{j=0}^{t_1/d^2}\binom{n/d^2}{j}\le n\binom{n/d^2}{t_1/d^2}\le n\left(\frac{\e n}{t_1}\right)^{t_1/d^2}\le \exp\bigg(\frac{2t_1}{d^2}\log\bigg(\frac{\e n}{t_1}\bigg)\bigg),
\end{align*}
where the first inequality uses that $t_1=|A|\le |B|\le n/2$. Thus, by the above together with \eqref{eq:E_1_t_1}, \eqref{eq:E_2-union_bound} and \eqref{eq: helping}, we obtain that
\begin{align}\label{eq: bounding ce2}
    \mathbb{P}\left(\cE_2\right)&\le  \sum_{t_1\in [\eps_0t,n/2]}\left(\mathbb P(\cE_1(t_1))+\exp\left(\frac{2t_1}{d^2}\log\bigg(\frac{\e n}{t_1}\bigg)-\frac{\delta^2\eps^2_1m(t_1)}{6}\right)\right)\notag\\
    &\le n\exp\biggl(-\frac{\nu\varepsilon_0 t \log_2(n/(\varepsilon_0 t))}{2d}\biggr)+\exp\biggl(-\frac{1}{2}\cdot \frac{\delta^2\eps^2_1}{6}\cdot \frac{\alpha \eps_0t\log_2(n/(\eps_0t))}{8d}\biggr)\nonumber\\
    &\le \exp\biggl(-\frac{\eps_2t\log(n/t)}{d}\biggr),
\end{align}
where we made use of the relation $m(t_1)=\omega(t_1\log(\e n/t_1)/d^2)$ for every $t_1\in [\eps_0t,n/2]$, and assumed that $\eps_2\coloneqq \eps_2(\eps_0,\eps_1,\delta,\nu,\alpha)>0$ is a sufficiently small constant.
\end{proof}

We now restrict ourselves to $t\in [n/d^{0.1},n]$. Observe that the event $\cE_2^c$ implies that there is a component $L_1'$ in $G_2$ whose order is at least $v(\cM_2^-)-\eps_0t$. 
Denote the family of components in $\cM_2^-$ which are not in $L_1'$ by $\cC$. 
Moreover, recall the set $S_{\cC}$ from the notation paragraph before \Cref{l: no explosions} and the subset $\cS$ of $\cM_{10}$ containing the components disjoint from $V(\cM_2^-)$, see Figure \ref{f: big proof}. Further, note that the inequalities
\begin{align}
    \mathbb{P}(v(\cM_{10})\le y(c)n-2n/d^{1/9})\le \exp(-n/d^{7/9})\qquad \text{and}\qquad \mathbb{P}(v(\cS)\ge n/d^{1/2})\le \exp(-n/d^{2/3})\label{eq: W(p)}
\end{align}
follow from \Cref{l: W(p)} and \Cref{l: no surprises from outside}, respectively. 
Note that $L_1'$ is the unique component in $\cM_{10}\setminus \cS$ intersecting $\cM_2^-\setminus \cC$ (see Figure \ref{f: big proof}), implying that
\begin{align*}
|V(L_1')|\ge v(\cM_{10}\setminus\cS)-v(\cC)-|S_{\cC}| = v(\cM_{10})-v(\cS)-v(\cC)-|S_{\cC}|.
\end{align*}
Thus, $\mathbb{P}\left(|V(L_1)|\le y(c)-t\right)$ is bounded from above by
\begin{equation}\label{eq:lower_tail_final_expansion}
\begin{split}
\mathbb{P}\left(|V(L_1')|\le y(c)-t\right)
\le \mathbb{P}(\cE_2)
&+ \mathbb{P}(v(\cM_{10})\le y(c)-2n/d^{1/9})
+\mathbb{P}(v(\cS)\ge n/d^{1/2})\\
&+\mathbb{P}(\{v(\cC)+|S_{\cC}|\ge t/2\}\cap \{v(\cC)\le \eps_0t\}),
\end{split}
\end{equation}
where the last term uses that $v(\cC)\le \eps_0 t$ on the event $\cE_2^c$.
Thus, by combining~\eqref{eq:E_1_t_1}, \eqref{eq: bounding ce2}, and \eqref{eq: W(p)}, the sum of the first three terms on the right hand side of~\eqref{eq:lower_tail_final_expansion} is at most 
\[
\e^{-\eps_2t\log(n/t)/d}+\e^{-n/d^{7/9}}+\e^{-n/d^{2/3}}\le 2\e^{-\eps_2t\log(n/t)/d}.
\]
For the last term, the two events therein jointly yield 
$$
|S_{\cC}|\ge (1/2-\eps_0)t\ge K\cdot (\eps_0t)\ge K\max\{d^{-1/2}n,v(\cC)\}.
$$
Note that $S_{\cC}$ is not adjacent to the components $\cM_2^-\setminus \cC$ in $G_2$: indeed, otherwise, if $v\in \cC$ is adjacent to a component $C\subseteq S_v$ which is, in turn, adjacent to a component in $\cM_2^-\setminus \cC$, then $v$ and $C$ belong to $L_1'$, contradicting the definition of $\cC$. Therefore, the assumptions of \Cref{l: no explosions} are satisfied, implying 
\[\mathbb{P}(\{v(\cC)+|S_{\cC}|\ge t/2\}\cap \{v(\cC)\le \eps_0t\})= o(\e^{-\eps_2t\log(n/t)/d}).\]
Choosing $\eps = \eps_2/2$ completes the proof of the lower tail estimate in \Cref{th: large tail deviation}.
\qed{}
\begin{figure}[H]
\centering
\includegraphics[width=0.8\textwidth]{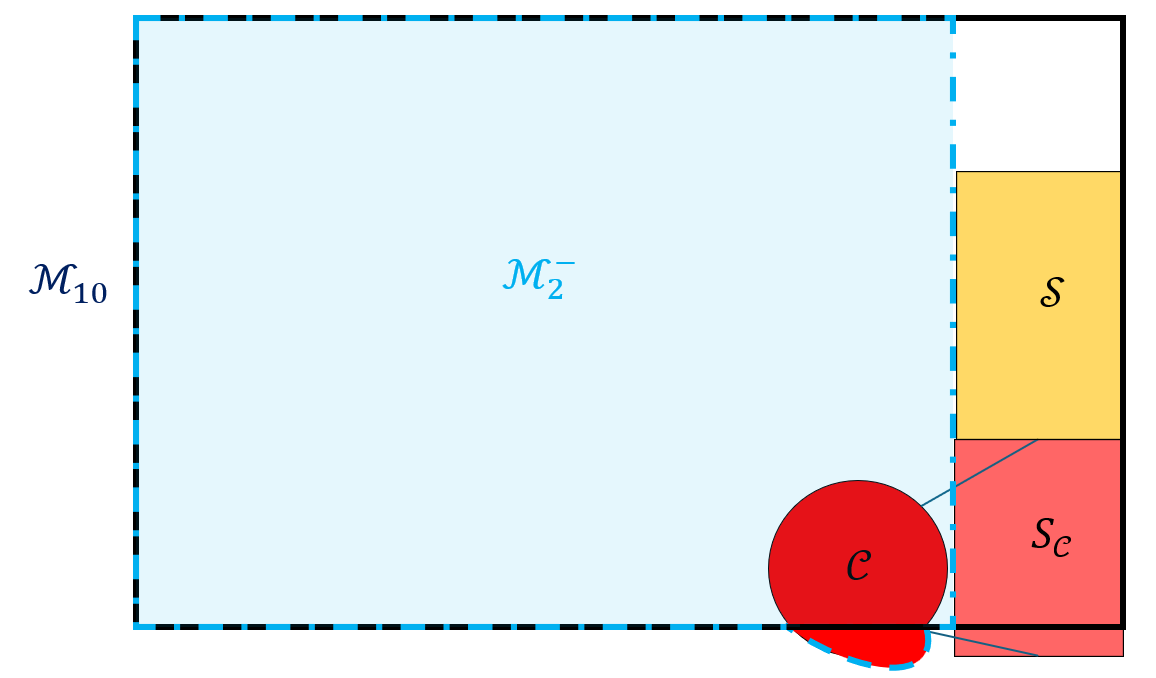}
\caption{Illustration of some of the sets from the last proof. The family $\cM_{10}$ is represented by a rectangle with a thick black border, and the family $\cM_2^-$ is represented by a light-blue shape with a dashed border. 
The family $\cC$ of components in $\cM_2^-$ which are not part of $L_1'$ and the set of vertices in $S_{\cC}$ attached to it through edges of $Q^d_{p_2}$ are both represented in red. 
The family $\cS$ of components which are in $\cM_{10}$ but do not intersect $\cM_2^-$ is represented in yellow. $L_1'$ consists of the light-blue and the white region.}
\label{f: big proof}
\end{figure}

\section{Mixing time}\label{s: mixing time}
In this section, we fix a constant $c>1$, set $p=p(d)=c/d$ and $n=2^d$. Recall $p_1,p_2,G_1$ and $G_2$ from Section \ref{s: lower tail} and that $L_1$ stands for the giant component in $G_2\cong Q^d_p$. 
The section is structured as follows: in \Cref{subsec: expansion}, we utilise results in \Cref{s: lower tail} to deduce some expansion properties of $L_1$, and use them to derive \Cref{th: main}\ref{th: mixing time} in \Cref{subsec: mixing}.

\subsection{Expansion properties of the giant}\label{subsec: expansion}
First, we show that connected sets in the giant $L_1$ expand sufficiently well.

\begin{proposition}\label{prop: expansion}
For every sufficiently small $\delta = \delta(c) > 0$, there are $\gamma = \gamma(c,\delta),\eps = \eps(c,\delta)>0$ such that \textbf{whp}, for every set $S\subseteq V(L_1)$ with size $|S|\eqqcolon  k\in [n^{1-\gamma},y(c-2\delta)n/4]$ connected in $L_1$, we have that $e_{G_2}(S,V(L_1)\setminus S)\ge \eps k\log_2(n/k)/d$.
\end{proposition}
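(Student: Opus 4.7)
The plan is to combine a thinning argument with the stability principle (Lemma \ref{l: no surprises from outside_gen}) and the sprinkling-deviation analysis from the proof of Lemma \ref{lem:estimate_E2}. In the coupling $G_1 \subseteq G_2$, each edge of $G_2$ is independently retained in $G_1$ with probability $p_1/p = (c-\delta)/c$, hence is ``thinned out'' with probability $\bar q := \delta/c$. Fix $k$ in the allowed range, let $\mathcal{A}_k$ denote the event that some $S$ as in the proposition's statement with $|S|=k$ exists, and pick a canonical such $S^* = S^*(G_2)$ on $\mathcal{A}_k$. Since $\bar q^{|\partial_{G_2}S^*|} \ge \bar q^{\eps k \log_2(n/k)/d}$ on $\mathcal{A}_k$, taking expectations yields
\[\mathbb{P}(\mathcal{A}_k) \le \bar q^{-\eps k \log_2(n/k)/d}\,\mathbb{P}(\mathcal{A}'_k),\]
where $\mathcal{A}'_k$ is the corresponding event with the extra requirement that no $G_1$-edge crosses $\partial S$ --- equivalently, $S$ is a union of $G_1$-components.

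I condition on the typical event $\mathcal{T} := \{|V(L_1^{G_1})| > y(c-2\delta)n/4\}$; the lower-tail bound of Theorem \ref{th: large tail deviation} applied to $Q^d_{p_1}$ gives $\mathbb{P}(\mathcal{T}^c) \le \exp(-Cn/d)$ for some $C = C(c,\delta) > 0$. On $\mathcal{A}'_k \cap \mathcal{T}$, since $|S| \le y(c-2\delta)n/4 < |V(L_1^{G_1})|$ and $S$ is a union of $G_1$-components, the component $L_1^{G_1}$ is disjoint from $S$. In the \emph{type-1} case $|V(\cM_{[0,2)}^-) \cap S| \ge (1-\eta)k$, the set $S$ meets all four hypotheses of Lemma \ref{l: no surprises from outside_gen} --- (a) from $k \ge n^{1-\gamma} \ge d^{51}$, (b) since $G_2[S]$ is a single connected component of order at least $d^{10}$, (c) by the thinning, (d) by assumption --- so this contribution to $\mathbb{P}(\mathcal{A}'_k \cap \mathcal{T})$ is at most $\exp(-\eps_1 k)$ for some $\eps_1 = \eps_1(c,\delta,\eta) > 0$.

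In the complementary \emph{type-2} case $|V(\cM_2^-)\cap S| > \eta k$, the pair $(A,B) := (V(\cM_2^-)\cap S,\, V(\cM_2^-)\setminus S)$ is a component-respecting partition of $V(\cM_2^-)$ with $|A| > \eta k$ and $e_{G_2}(A,B) \le |\partial_{G_2}S| \le \eps k \log_2(n/k)/d$. Adapting the sprinkling analysis from the proof of Lemma \ref{lem:estimate_E2}, the well-spreadness of $\cM_2^-$ (Lemma \ref{l: large components are well spread}) combined with Harper's isoperimetric inequality (Lemma \ref{l: Harper}) guarantees at least $\Omega(|A|\log_2(n/|A|))$ edges of $Q^d$ between the extended sets $A', B'$, so the expected number of $Q^d_{p_2}$-edges between $A'$ and $B'$ is $\Omega(\eta k \log_2(n/k)/d)$. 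A Chernoff-type tail bound on the sprinkling yields probability at most $\exp(-\Omega(\eta k \log_2(n/k)/d))$ that fewer than $\eps k \log_2(n/k)/d$ such edges appear; the union bound over component-respecting partitions inflates this by at most $\exp(O(|A|\log_2(n/|A|)/d^2))$ (since $\cM_2^-$ has at most $n/d^2$ components), which is negligible compared to the Chernoff exponent.

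Combining all three pieces gives $\mathbb{P}(\mathcal{A}_k) \le \mathbb{P}(\mathcal{T}^c) + \bar q^{-\eps k \log_2(n/k)/d}\bigl[\exp(-\eps_1 k) + \exp(-\Omega(\eta k \log_2(n/k)/d))\bigr]$. Taking $\eps$ small enough in terms of $c, \delta, \eta$ so that $\eps \log(c/\delta)$ is smaller than both $\eps_1/2$ and the implicit constant from the type-2 estimate makes each bracketed term, after multiplication by $\bar q^{-\eps k \log_2(n/k)/d}$, of the form $\exp(-\Omega(k))$ or $\exp(-\Omega(k \log_2(n/k)/d))$; summing over $k \in [n^{1-\gamma}, y(c-2\delta)n/4]$ then yields $o(1)$. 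The main technical obstacle is type-2: a generic Markov bound on the non-giant mass in $\cM_2^-$ is too weak to absorb the thinning inflation, forcing the reuse of the sprinkling-deviation machinery of Section \ref{s: lower tail}; crucially, the connectedness of $S$ in $G_2$ (exactly the feature that distinguishes this proposition from the weaker generic isolation-from-giant bound) ensures $|A| = \Omega(k)$, placing the argument in the regime where that machinery produces the necessary exponential gain.
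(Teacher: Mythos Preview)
Your overall architecture matches the paper's: thin $G_2$ to $G_1$, then split into a ``mostly small $G_1$-components'' case handled by the stability principle and a ``substantial mass in $\cM_2^-$'' case handled by the sprinkling analysis. The type-1 case is fine. The gap is in type-2.

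You claim that on $\cA'_k$ in the type-2 case, with $(A,B)=(V(\cM_2^-)\cap S,\,V(\cM_2^-)\setminus S)$, you can bound the probability via a Chernoff bound on $Q^d_{p_2}$-edges between the extended sets $A',B'$ from the proof of Lemma~\ref{lem:estimate_E2}. But the event you need to place inside the union $\bigcup_{(A,B)}\{e_{Q^d_{p_2}}(A',B')\text{ small}\}$ is not implied by $\cA'_k$: the vertices of $A_1$ and $B_1$ lie outside $V(\cM_2^-)$ and can sit on either side of $\partial S$, so a sprinkled edge between $A'$ and $B'$ need not cross $\partial S$ at all. Thus ``$e_{G_2}(S,V\setminus S)$ small'' does not force ``$e_{Q^d_{p_2}}(A',B')$ small'', and your Chernoff/union-bound step does not control the right event. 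Equally, you cannot invoke Lemma~\ref{lem:estimate_E2} directly, because on $\cA'_k$ there \emph{are} $G_2$-paths from $A$ to $B$ (both meet $L_1$), so the event $\cE_2(A,B)$ simply does not hold.

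The paper resolves this by introducing a third level $G_0=Q^d_{p_0}$ with $p_0=(c-2\delta)/d$ and running both the stability lemma and the sprinkling estimate one level down: the dichotomy is on $\cM^{(0)}_{[0,2)}$ versus $\cM^{(0)}_2$, and in the type-2 case one obtains a component-respecting partition $(A,B)$ of $V(\cM^{(0)}_2)$ with no $G_1$-paths between $A$ and $B$ (this follows cleanly from $e_{G_1}(S,V\setminus S)=0$). That is exactly the event $\cE_2$ for the pair $(G_0,G_1)$, so Lemma~\ref{lem:estimate_E2} applies verbatim with $(G_0,G_1)$ in place of $(G_1,G_2)$. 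The extra level decouples the partition (now a function of $G_0$) and the ``no paths'' condition (a function of $G_1$ only) from the $G_2$-dependent choice of $S$, eliminating the circularity that breaks your argument.
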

\begin{proof}
Fix sufficiently small constants $\delta = \delta(c) > 0$ and $\gamma = \gamma(c,\delta) > 0$ (in particular, satisfying $c-2\delta>1$) and let $k = k(d)\in [n^{1-\gamma},y(c-2\delta)n/4]$. 
We define $p_0\coloneqq(c-2\delta)/d$, $G_0\coloneqq Q^d_{p_0}$ (equipped with the natural coupling ensuring $G_0\subseteq G_1\subseteq G_2$), $\rho = p_1/p=1-\delta/c$ and $\rho_0=p_0/p_1 = 1-\delta/(c-\delta)$. 
Define the events
\begin{align*}
    \cA_k(\eps)&\coloneqq\{\exists S\subseteq V(L_1), |S|=k, L_1[S] \text{ is connected}: e_{G_2}(S,V(L_1)\setminus S)<\eps k\log_2(n/k)/d\},\\
    \cB_k &:= \{\exists S\subseteq V(L_1), |S|=k, L_1[S] \text{ is connected}: e_{G_1}(S,V(L_1)\setminus S)=0\}.
\end{align*}
Note that
\begin{align*}
    \mathbb{P}(\cB_k\mid \cA_k(\eps))\ge (1-\rho)^{\eps k\log_2(n/k)/d}=\exp\left(-\log(c/\delta)\eps k\log_2(n/k)/d\right).
\end{align*}
Combining the latter with the inequality $\mathbb{P}(\cA_k(\eps)) \mathbb{P}(\cB_k\mid \cA_k(\eps))\le \mathbb{P}(\cB_k)$ then yields
\begin{align}
\mathbb{P}(\cA_k(\eps)) \le \mathbb{P}(\cB_k)\cdot \exp\left(\log(c/\delta)\eps k\log_2(n/k)/d\right).\label{eq: estimate on Akeps}
\end{align}

Next, we estimate $\mathbb{P}(\cB_k)$ from above. Denote by $\cM_2^{(0)}$ (resp.\ $\cM_{[0,2)}^{(0)}$) the family of components of order at least (resp.\ less than) $d^2$ in $G_0$.
Fix a small constant $\eta = \eta(c)>0$ and denote by $\cD_k(\eta)$ the event that some set $S\subseteq V(L_1)$ of size $k$ connected in $G_2$ satisfies the assumptions of \Cref{l: no surprises from outside_gen} for $G_0$ instead of $G_1$: 
namely $e_{G_0}(S,V(L_1)\setminus S)=0$ and the number of vertices in $V(\cM_{[0,2)}^{(0)})\cap S$ is at least $(1-\eta)k$.
By \Cref{l: no surprises from outside_gen}, there exists a constant $\eps_1 = \eps_1(c,2\delta,\eta)>0$ such that 
\begin{align}
\mathbb{P}(\cD_k(\eta))\le \exp(-\eps_1k)\label{eq: cd_k_eta}    
\end{align}
Further, let $\cM_{10}^{(0)}$ be the family of components of order at least $d^{10}$ in $G_0$. Let $\cF$ be the event that $v(\cM_{10}^{(0)})\le \frac{2y(c-2\delta)n}{3}$. By Corollary \ref{l: W(p)}, 
\begin{align}
\mathbb{P}\left(\cF\right)\le \exp\left(-n/d^{3/4}\right)\label{eq: cf 3/4}    
\end{align}
Note that $\cF^c$ implies that $v(\cM_{2}^{(0)})\ge v(\cM_{10}^{(0)})>\frac{2y(c-2\delta)n}{3}$.

Now, denote by $\cE_k(\eta)$ the event that there is a component-respecting partition $A,B$ of $V(\cM_2^{(0)})$ such that $\eta k \le |A|\le |B|$ and there are no paths between $A$ and $B$ in $G_1$.
Note that $\cB_k\cap \cD_k(\eta)^c\cap \cF^c\subseteq \cE_k(\eta)$: indeed, let $S$ be the set whose existence is claimed by $\cB_k$. We can set $A$ to be the set of vertices in $G_0[S]$ in components in $\cM_2^{(0)}$. By $\cD_k(\eta)^c$ there are at least $\eta k$ such vertices and since $|A|\le |S|\le k\le y(c-\delta)n/4$ we have by $\cF^c$ that $|B|=v(\cM_2^{(0)}\setminus A)\ge v(\cM_2^{0})-|A|\ge |A|$, and by $\cB_k$ there are no edges in $G_1$ between $S$ and $V(L_1)\setminus S$ (in fact, between $S$ and $V\setminus S$).
However, by \Cref{lem:estimate_E2} (applied with $G_0,G_1$ instead of $G_1,G_2$), we have that there is $\eps_2 = \eps_2(c-\delta,\delta)>0$ such that $\mathbb P(\cE_k(\eta))\le \exp(-\eps_2 \eta k\log(n/\eta k)/d)$.
By combining the latter observation with~\eqref{eq: estimate on Akeps}, \eqref{eq: cd_k_eta}, and \eqref{eq: cf 3/4}, we have
\[\begin{split}
\mathbb P(\cA_k(\eps))
&\le (\mathbb P(\cD_k(\eta))+\mathbb{P}(\cF)+\mathbb P(\cB_k\cap \cD_k(\eta)^c\cap \cF^c))\exp\bigg(\frac{\log(c/\delta)\eps k\log_2(n/k)}{d}\bigg)\\
&\le (\mathbb P(\cD_k(\eta))+\mathbb P(\cE_k(\eta)))\exp\bigg(\frac{\log(c/\delta)\eps k\log_2(n/k)}{d}\bigg)\\
&\le \bigg(2\exp(-\eps_1 k)+\exp\bigg(-\frac{\eps_2\eta k\log(n/\eta k)}{d}\bigg)\bigg)\exp\bigg(\frac{\log(c/\delta)\eps k\log_2(n/k)}{d}\bigg).
\end{split}\]
Thus, by choosing $\eps = \eps(c,\delta,\eps_1,\eps_2,\eta)$ suitably small shows that $\mathbb P(\cA_k(\eps))=o(1/n)$. The union bound over the less than $n$ values of $k$ completes the proof.
\end{proof}

Our approach also requires some control on the expansion of sets in $L_1$ which are larger than $v(L_1)/2$. This is ensured by the following proposition.

\begin{proposition}\label{prop: expansion large}
For every $\nu\in (0,1)$, there exists a constant $\eps = \eps(c,\nu)>0$ such that \textbf{whp} every set $S\subseteq V(L_1)$ of size $|S|\eqqcolon k\in [\nu v(L_1),(1-\nu)v(L_1)]$ satisfies that $e_{L_1}(S,V(L_1)\setminus S)\ge \eps k/d$.
\end{proposition}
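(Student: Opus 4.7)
The plan is to mimic the forward-sprinkling strategy of \Cref{prop: expansion}, but to exploit the linear size of $|S|$ (and the absence of a connectivity requirement) by invoking the lower tail of $v(L_1^{(1)})$ from \Cref{th: large tail deviation} in place of the more delicate stability principle (\Cref{l: no surprises from outside_gen}). Fix $\nu\in(0,1)$ and pick $\delta=\delta(c,\nu)>0$ so small that $c-\delta>1$ and, by \Cref{lem:yb}, $y(c)-y(c-\delta)<\nu y(c)/4$. Couple $G_1=Q^d_{p_1}\subseteq G_2\cong Q^d_p$ with $p_1=(c-\delta)/d$, so that $\rho:=p_1/p=1-\delta/c$, and let $L_1^{(1)}$ denote the giant component of $G_1$.

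For each $k$, let $\cA_k(\eps)$ be the event that some $S\subseteq V(L_1)$ has $|S|=k\in [\nu v(L_1),(1-\nu)v(L_1)]$ and $e_{G_2}(S,V(L_1)\setminus S)<\eps k/d$, and define $\cB_k$ analogously but demanding $e_{G_1}(S,V(L_1)\setminus S)=0$. Given $\cA_k(\eps)$ and a (deterministically chosen) witness $S=S(G_2)$, each of the at most $\eps k/d$ boundary edges is absent from $G_1$, conditionally on lying in $G_2$, independently with probability $\delta/c$. Hence
\[\mathbb{P}(\cA_k(\eps))\le \mathbb{P}(\cB_k)\cdot (c/\delta)^{\eps k/d}.\]

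Next, I would introduce a typical event $\cG$ requiring all three of $|v(L_1)-y(c)n|\le \sigma n$, $v(L_1^{(1)})\ge (y(c-\delta)-\sigma)n$, and $v(\cM_{10})\le (y(c)+\sigma)n$, for a small $\sigma=\sigma(c,\nu)>0$ chosen so that $y(c)-y(c-\delta)+2\sigma<\nu y(c)/2$ and $\sigma<y(c-\delta)/3$. By \Cref{th: large tail deviation} applied to $G_1$ and $G_2$ at scale $t=\sigma n$, and by \Cref{l: W(p)}, we have $\mathbb{P}(\cG^c)\le \exp(-\Omega_{c,\nu}(n/d))$. The third condition forces every $G_2$-component distinct from $L_1$ to have size at most $2\sigma n<v(L_1^{(1)})$, and hence $V(L_1^{(1)})\subseteq V(L_1)$ on $\cG$. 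The crux of the argument is then that $\cB_k\cap\cG=\varnothing$ for every $k$: indeed, on $\cB_k$ the set $S$ must be a disjoint union of connected components of $G_1$ contained in $V(L_1)$, so either $V(L_1^{(1)})\subseteq S$ or $V(L_1^{(1)})\cap S=\varnothing$, and in either case on $\cG$
\[\min(|S|,\,v(L_1)-|S|)\le v(L_1)-v(L_1^{(1)})\le (y(c)-y(c-\delta)+2\sigma)n<\tfrac{\nu v(L_1)}{2},\]
contradicting $|S|\in [\nu v(L_1),(1-\nu)v(L_1)]$.

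Combining Steps, for every $k$,
\[\mathbb{P}(\cA_k(\eps))\le \mathbb{P}(\cG^c)\cdot (c/\delta)^{\eps k/d}\le \exp\bigl(-\Omega_{c,\nu}(n/d)+\eps n\log(c/\delta)/d\bigr).\]
Choosing $\eps=\eps(c,\nu)>0$ small enough that $\eps\log(c/\delta)$ is dominated by the $\Omega$-constant, the right-hand side is at most $\exp(-\Omega_{c,\nu}(n/d))=o(1/n)$, and a union bound over $k\in [n]$ finishes the proof. The main obstacle I anticipate is the interlocking choice of $\delta$ and $\sigma$ in terms of $\nu$: the case analysis leading to $\cB_k\cap\cG=\varnothing$ hinges entirely on the Lipschitz estimate $y(c)-y(c-\delta)=O_c(\delta)$ from \Cref{lem:yb}, and $\sigma$ must be small enough relative to $\delta$ (and $\delta$ small enough relative to $\nu$) so that the residue $V(L_1)\setminus V(L_1^{(1)})$ is strictly smaller than the smaller side of any admissible partition of $V(L_1)$.
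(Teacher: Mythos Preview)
Your proposal is correct and follows essentially the same strategy as the paper's proof: reverse-sprinkle the small boundary to obtain a set $S\subseteq V(L_1)$ with no $G_1$-edges to $V(L_1)\setminus S$, then argue that this forces the giant $L_1^{(1)}$ of $G_1$ to be abnormally small (or $v(\cM_{10})$ abnormally large), events whose probabilities are controlled by \Cref{th: large tail deviation} and \Cref{l: W(p)} at linear deviation scale. The paper packages the atypical events as $\cB_k(\beta)=\{v(\cM_{10})\ge y(c)n+k/\beta\}$ and $\cC_k(\beta)=\{v(L_1^-)\le y(c-\delta)n-k/\beta\}$ with thresholds depending on $k$, whereas you use a single event $\cG$ with fixed thresholds $\sigma n$; since $k=\Theta_{\nu,c}(n)$ throughout, these are equivalent formulations, and the contradiction $\cB_k\cap\cG=\varnothing$ is exactly the paper's inclusion $\cD_k\subseteq \cB_k(\beta)\cup \cC_k(\beta)$.

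One cosmetic slip: in your displayed inequality you write $v(L_1)-v(L_1^{(1)})<\nu v(L_1)/2$, but with your stated constraint $y(c)-y(c-\delta)+2\sigma<\nu y(c)/2$ you only get $v(L_1)-v(L_1^{(1)})<\nu y(c)n/2$, which need not be below $\nu v(L_1)/2$ on $\cG$. This is harmless, since the contradiction only requires the bound to fall below $\nu v(L_1)$, and on $\cG$ you have $\nu v(L_1)\ge \nu(y(c)-\sigma)n>\nu y(c)n/2$ once $\sigma<y(c)/2$; alternatively, just tighten the constraint on $\sigma$ to $y(c)-y(c-\delta)+2\sigma<\nu(y(c)-\sigma)/2$.
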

\begin{proof}
Fix $k\in [\nu v(L_1),(1-\nu)v(L_1)]$ and recall from \Cref{lem:yb} that the survival probability $y(c)$ defined in \Cref{th: large tail deviation} satisfies that, for every $c>1$ and $\delta=\delta(c)>0$ small, $y(c)-y(c-\delta)\le \eta \delta$ for some fixed constant $\eta = \eta(c)>0$.

Recalling $\cM_{10}$ from \Cref{subsec: vol}, fix $\beta = \beta(c,\nu)>0$ sufficiently large and define
\begin{align*}
    \mathcal{A}_k(\eps)&\coloneqq\{\exists S\subseteq V(L_1), |S|=k\colon e_{L_1}(S,V(L_1)\setminus S)<\eps k/d\},\\
    \mathcal{B}_k(\beta)&\coloneqq\left\{v(\cM_{10})\ge y(c)n+k/\beta\right\}.
\end{align*}
By \Cref{l: W(p)},
\begin{align}
    \mathbb{P}\left(\mathcal{B}_k(\beta)\right)\le 2\exp(-k^2/(4\beta^2\cdot 40cd^{1/2}n)).\label{eq: bk}
\end{align}

Fix $\delta = \delta(c,\nu)>0$ small, recall $p_1=(c-\delta)/d$ and set $\rho=p_1/p=1-\delta/c$. 
Denote by $\mathcal{C}_k(\beta)$ the event that the largest component $L_1^-$ of $G_1$ has order at most $y(c-\delta)n-k/\beta$. Then, by Theorem \ref{th: large tail deviation}, there is a constant $\eps_1=\eps_1(c-\delta)>0$ such that
\begin{align}\label{eq:boundcC}
\mathbb{P}\left(\mathcal{C}_k(\beta)\right)\le \exp(-\eps_1 (k/\beta)\log(\beta n/k)/d).
\end{align}
Denote by $\cD_k$ the event that some set $S\subseteq V(L_1)$ of size $k$ connected in $L_1$ satisfies $e_{G_1}(S,V(L_1)\setminus S)=0$.
On the event $\cD_k$, the largest component $L_1^-$ of $G_1$ has order at most $\max\{k,v(\cM_{10})-k\}$. Then, by assuming $\cB_k(\beta)^c\cap \cC_k(\beta)^c$, we have that $v(L_1)\le v(\cM_{10})< y(c)n+k/\beta$ and $v(L_1)\ge v(L_1^-)>y(c-\delta)n-k/\beta$. Thus, assuming $\cB_k(\beta)^c\cap \cC_k(\beta)^c$ we have that
\begin{align*}
\max\{k,v(\cM_{10})-k\}&\le \max\{(1-\nu)v(L_1), v(\cM_{10})-\nu v(L_1)\}\\
&\le \max\{(1-\nu)v(\cM_{10}), v(\cM_{10})-\nu v(L_1^-)\}\\
&=v(\cM_{10})-\nu v(L_1^-)\\
&< y(c)n+k/\beta -\nu\left(y(c-\delta)n-k/\beta\right)\\
&\le (1-\nu)y(c-\delta)n+(1+\nu)k/\beta+\eta\delta n.
\end{align*}
Now, for $\delta=\delta(c,\nu)$ sufficiently small, $\eta\delta n\le \nu y(c-\delta)n/2$. Further, for $\beta=\beta(c,\nu)$ sufficiently large, $(2+\nu)k/\beta \le 3v(L_1)/\beta \le \nu y(c-\delta)n/2$. Thus, assuming $\cB_k(\beta)^c\cap \cC_k(\beta)^c$,
\begin{align*}
    \max\{k,v(\cM_{10})-k\}<y(c-\delta)-k/\beta.
\end{align*}
We thus have that $\cD_k\cap \cB_k(\beta)^c\cap \cC_k(\beta)^c$ contradicts $\cC_k(\beta)^c$, and therefore
\[\cD_k\subseteq \cB_k(\beta)\cup \cC_k(\beta),\]
which implies that
\[\mathbb P(\cD_k\mid\cA_k(\eps))\mathbb P(\cA_k(\eps))\le \mathbb P(\cD_k)\le \mathbb P(\cB_k(\beta))+\mathbb P(\cC_k(\beta))\qquad\Longrightarrow\qquad \mathbb P(\cA_k(\eps))\le \frac{\mathbb P(\cB_k(\beta))+\mathbb P(\cC_k(\beta))}{\mathbb P(\cD_k\mid\cA_k(\eps))}.\]
At the same time, we also have that
\begin{equation}\label{eq:D_k}
\mathbb P(\cD_k\mid \cA_k(\eps))\ge (1-\rho)^{\eps k/d} = \exp(-\log(c/\delta)\eps k/d).
\end{equation}
To complete the proof note that, first, by \eqref{eq: bk} and \eqref{eq:D_k}, we have that $\mathbb P(\cB_k(\beta)) = o(\mathbb P(\cD_k\mid\cA_k(\eps))/n)$.
Second, by choosing $\eps = \eps(c,\beta,\delta)$ suitably small and using~\eqref{eq:boundcC} and~\eqref{eq:D_k}, we have that $\mathbb P(\cC_k(\beta)) = o(\mathbb P(\cD_k\mid\cA_k(\eps))/n)$. We then have that $P(\cA_k(\eps)=o(1/n)$. The union bound over the less than $n$ possible values of $k$ completes the proof.
\end{proof}

We conclude this section with the following corollary restating \Cref{thm: old expansion}, \Cref{prop: expansion} and \Cref{prop: expansion large} in a common framework for the purposes of \Cref{subsec: mixing}.

\begin{corollary}\label{cor: new expansion}
There are constants $\eps_1 = \eps_1(c)\in (0,1)$, $K_1 = K_1(c)>0$, and $\gamma=\gamma(c)>0$ such that, for every $\nu>0$, there exists $\eps_2 = \eps_2(\nu,c)>0$ such that each of the following holds \textbf{whp}:
\begin{enumerate}[label=\upshape(\alph*)]
    \item For every $k \in [K_1d,n^{\eps_1}]$ and every set $S\subseteq V(L_1)$ of size $k$ connected in $L_1$,
    \begin{align*}
        |N_{G_2}(S)|\ge \eps_2 k.
    \end{align*} \label{item: small}
    \item For every $k \in [n^{\eps_1},n^{1-\gamma}]$ and every set $S\subseteq V(L_1)$ of size $k$ connected in $L_1$
    \begin{align*}
        e_{G_2}(S, V(L_1)\setminus S)\ge \frac{\eps_2k\log_2(n/k)}{d\log d}.
    \end{align*} \label{item: medium}
    \item For every $k \in [n^{1-\gamma},(1-\nu)v(L_1)]$ and every set $S\subseteq V(L_1)$ of size $k$ connected in $L_1$
    \begin{align*}
        e_{G_2}(S, V(L_1)\setminus S)\ge \frac{\eps_2k\log_2(n/k)}{d}.
    \end{align*} \label{item: large}
\end{enumerate}
\end{corollary}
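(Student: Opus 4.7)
The plan is to assemble the three existing expansion statements Theorem~\ref{thm: old expansion}, Proposition~\ref{prop: expansion}, and Proposition~\ref{prop: expansion large} under matching constants, splitting the range of $k$ in part~\ref{item: large} into two subranges. Parts~\ref{item: small} and~\ref{item: medium} are essentially restatements of the prior result, so the real bookkeeping only arises in part~\ref{item: large}.

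For part~\ref{item: small} I would take $K_1$, $\eps_1$, and $\eps_2$ no larger than the constants $K$, $\eps_1$, and $\eps_2$ from Theorem~\ref{thm: old expansion}\ref{item: small}. For part~\ref{item: medium} I would invoke Theorem~\ref{thm: old expansion}\ref{item: medium}: its range $[n^{\eps_1}, 2v(L_1)/3]$ whp contains $[n^{\eps_1}, n^{1-\gamma}]$ for any fixed $\gamma>0$, since whp $v(L_1)=\Theta_c(n)$ by Theorem~\ref{th: large tail deviation}.

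For part~\ref{item: large}, I would fix $\delta=\delta(c)>0$ small enough for Proposition~\ref{prop: expansion} to apply (in particular with $c-2\delta>1$), which produces the universal constant $\gamma=\gamma(c)>0$ used throughout the corollary, and a universal constant $\eps'=\eps'(c)>0$ giving the required bound for $k\in[n^{1-\gamma},\,y(c-2\delta)n/4]$. For the remaining upper subrange $k\in[y(c-2\delta)n/4,\,(1-\nu)v(L_1)]$, I would set $\nu':=\min\{\nu,\,y(c-2\delta)/(8y(c))\}$ and apply Proposition~\ref{prop: expansion large} with parameter $\nu'$. Using the whp bound $v(L_1)\le 2y(c)n$ from Theorem~\ref{th: large tail deviation}, we have $\nu' v(L_1)\le y(c-2\delta)n/4$ whp, so the upper subrange is contained in $[\nu' v(L_1),\,(1-\nu')v(L_1)]$. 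Proposition~\ref{prop: expansion large} then yields $\eps''=\eps''(c,\nu)>0$ such that whp $e_{L_1}(S,V(L_1)\setminus S)\ge \eps'' k/d$ for every $S$ in this range. Since $k=\Theta_c(n)$ here, $\log_2(n/k)=O_c(1)$, so $\eps'' k/d$ dominates $\eps_2 k\log_2(n/k)/d$ once $\eps_2$ is chosen small enough in a $c$-dependent way.

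Finally, I would set $\eps_2=\eps_2(c,\nu)>0$ smaller than all the constants extracted above (after absorbing any $c$-dependent factors) and combine the three whp events by a union bound. There is no serious obstacle; the three cited results do essentially all the work. The only delicate point is the consistent choice of parameters so that the three subranges tile $[K_1 d,(1-\nu)v(L_1)]$ whp, which is arranged by the choices of $\delta$, $\gamma$, and $\nu'$ above.
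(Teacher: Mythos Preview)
Your proposal is correct and matches the paper's approach exactly: the paper states this corollary without proof, noting only that it restates Theorem~\ref{thm: old expansion}, Proposition~\ref{prop: expansion}, and Proposition~\ref{prop: expansion large} in a common framework, and your assembly of these three results with the subrange splitting in part~\ref{item: large} is precisely the intended argument. The only minor remark is that your bookkeeping is more explicit than what the paper provides, but there is no substantive difference.
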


\subsection{Proof of Theorem \ref{th: main}\ref{th: mixing time}}\label{subsec: mixing}
First, we introduce some terminology related to Markov chains (for a comprehensive introduction, see~\cite{LPW17}). 
For a graph $G$ with vertex set $V$ and a vertex $u\in V$, the \emph{lazy (simple) random walk on $G$ starting at $u$} is a Markov chain with initial state $u$ and such that, 
at every step, the walk stays at its current state $v$ with probability $1/2$ and moves to a uniformly chosen neighbour of $v$ in $G$ with probability $1/(2\deg(v))$. 
If $G$ is a connected graph, the lazy random walk on $G$ is an irreducible, reversible and ergodic Markov chain with \emph{stationary distribution} $\pi$ on $V$ where $\pi(v) = \deg(v)/(2e(G))$.
Our goal is to determine the speed of convergence of the lazy random walk to its stationary distribution.
To this end, we define the \emph{total variation distance} $\dtv$ between two distributions $\pi_1$ and $\pi_2$ on $V$ by setting 
\[
\dtv(\pi_1,\pi_2):=\max_{A\subseteq V}|\pi_1(A)-\pi_2(A)|.
\]
For $t\ge 0$ and $u\in V$, we denote by $P^t(u,\cdot)$ the distribution of the $t$-th state of the lazy random walk on $G$ starting at $u$.
Then, the \textit{mixing time} of the lazy random walk is defined as
\[\tmix:=\min\{t\ge 0:d(t)\le 1/4\}\qquad\text{where, for every $t\ge 0$,}\qquad d(t):= \max_{u\in V}\dtv(P^t(u,\cdot),\pi).\]
For every set $S\subseteq V$, observe that
\begin{align*}
\pi(S)=\sum_{v\in S}\pi(v)=\frac{2e(S)+e(S,V\setminus S)}{2e(G)} \quad \text{and set} \quad 
Q(S):=\sum_{v\in S, u\in V\setminus S}\pi(v)P^1(v,u)=\frac{e(S,V\setminus S)}{4e(G)}.
\end{align*}
The \textit{conductance} $\Phi(S)$ of $S$ is then given by
\begin{align*}
    \Phi(S):=\frac{Q(S)}{\pi(S)\pi(V\setminus S)}.
\end{align*}
Note that, since $Q(S)=Q(V\setminus S)$ for every set $S\subseteq V$, we also have $\Phi(S)=\Phi(V\setminus S)$. 
Further,~we~denote $\pi_{\min}=\min_{v\in V} \pi(v)$ and, for all $\rho\in [\pi_{\min},1]$, we define
\begin{align*}
    \Phi(\rho):=\min\left\{\Phi(S): S\subseteq V, \rho/2\le \pi(S)\le \rho, \text{$S$ is connected in } G\right\},
\end{align*}
and if no set $S$ with the required properties exists for some $\rho\in [\pi_{\min},1]$, we set $\Phi(\rho)=1$. The following theorem of Fountoulakis and Reed~\cite{FR07a} bounds the mixing time of the lazy random walk on $G$ through the conductance of connected sets.

\begin{thm}[Theorem 1 of \cite{FR07a}] \label{th: mixing-time-tool}
\textit{There exists an absolute constant $K>0$ such that
\begin{align*}
    \tmix\le K\sum_{j=1}^{\lceil \log_2(\pi_{\min}^{-1})\rceil} \frac{1}{\Phi^2(2^{-j})}.
\end{align*}}
\end{thm}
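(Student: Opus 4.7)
The plan is to prove this conductance-profile mixing bound via the \emph{evolving set process} of Morris and Peres. For an initial vertex $u\in V$, I define a Markov chain $(S_t)_{t\ge 0}$ on subsets of $V$ by $S_0=\{u\}$ and $S_{t+1}=\{v:Q(S_t,v)/\pi(v)\ge U_{t+1}\}$, where $U_{t+1}\sim\mathrm{Unif}[0,1]$ is drawn independently at each step, and where the law of the lazy walk is rewritten using the transition kernel $P(v,\cdot)$. A direct induction gives the Doob-transform identity $P^t(u,v)/\pi(v)=\mathbb{E}_u[\1_{v\in S_t}/\pi(S_t)]$, which via Cauchy--Schwarz yields
\[
\dtv(P^t(u,\cdot),\pi)\;\le\;\frac{\mathbb{E}_u\bigl[\sqrt{\pi(S_t)(1-\pi(S_t))}\bigr]}{\sqrt{\pi(u)(1-\pi(u))}}.
\]
Thus the task reduces to controlling the decay of $\Psi_t:=\mathbb{E}_u\bigl[\sqrt{\pi(S_t)(1-\pi(S_t))}\bigr]$.

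The classical Morris--Peres drift inequality states that there is a universal $c>0$ such that, for every $S\subseteq V$,
\[
\mathbb{E}\bigl[\sqrt{\pi(S_{t+1})(1-\pi(S_{t+1}))}\,\big|\,S_t=S\bigr]\;\le\;\sqrt{\pi(S)(1-\pi(S))}\,\bigl(1-c\,\Phi(S)^2\bigr).
\]
Its derivation uses the explicit law of $S_{t+1}$, the martingale property $\mathbb{E}[\pi(S_{t+1})\mid S_t=S]=\pi(S)$, and strict concavity of $x\mapsto\sqrt{x(1-x)}$ applied to the induced distribution on $[0,1]$ encoded by $Q(S,\cdot)/\pi(\cdot)$. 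To upgrade this to a bound in terms of the \emph{connected}-conductance profile $\Phi(\rho)$ (recall that our $\Phi(\rho)$ is an infimum over connected sets), I would decompose $S_t$ into the connected components $C_1,\ldots,C_k$ of $G[S_t]$ and observe that $Q(S_t,V\setminus S_t)=\sum_i Q(C_i,V\setminus S_t)$ exceeds $\sum_i Q(C_i,V\setminus C_i)-2\sum_{i<j}Q(C_i,C_j)$. A "dominant component" argument then extracts a connected subset $C_*\subseteq S_t$ whose conductance (in the connected-profile sense) is within a universal factor of $\Phi(S_t)$, yielding $\Phi(S_t)\ge c'\,\Phi(\min\{\pi(S_t),1-\pi(S_t)\})$.

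To conclude, I would perform a phase decomposition across scales. For each $j\in\{1,\ldots,\lceil\log_2(\pi_{\min}^{-1})\rceil\}$, let $T_j$ denote the first time $t$ at which $\pi(S_t)(1-\pi(S_t))\le 2^{-j}$, so $T_0=0$. Within the window $[T_{j-1},T_j)$ one has $\pi(S_t)(1-\pi(S_t))\in(2^{-j},2^{-(j-1)}]$, so iterating the refined drift inequality gives $\Psi_{T_j}\le \Psi_{T_{j-1}}\bigl(1-c''\,\Phi(2^{-j})^2\bigr)^{T_j-T_{j-1}}$; since $\Psi$ must halve within the phase, this forces $T_j-T_{j-1}=O(\Phi(2^{-j})^{-2})$. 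Summing over $j$ and translating back through the Cauchy--Schwarz identity of the first paragraph then gives $\tmix\le K\sum_j \Phi(2^{-j})^{-2}$ for an absolute $K$. The main obstacle, and the technical core of the Fountoulakis--Reed strengthening over standard Lov\'asz--Kannan--Morris--Peres bounds, is the connected-set reduction: the evolving set $S_t$ can genuinely disconnect over time, and one must rule out that disconnected bottleneck sets govern the slow modes. Handling this requires a careful boundary-balance argument on the components of $S_t$, possibly replacing $S_t$ by a single connected component carrying a constant fraction of its mass and showing that this modification only loses a universal factor in the drift estimate.
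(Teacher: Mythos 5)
The paper does not prove this statement: \Cref{th: mixing-time-tool} is quoted verbatim from \cite{FR07a} and used as a black box, so there is no ``paper's own proof'' to compare against. I am therefore evaluating your sketch on its own terms.

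Your evolving-set blueprint is sound through the Doob-transform identity, the Cauchy--Schwarz step, and the Morris--Peres drift inequality, and the phase decomposition at the end is a standard way to convert a pointwise drift into a profile bound. The genuine gap is exactly where you flag it, namely the reduction to connected sets, and the specific claim you write down there is false in general. Decomposing $S_t$ into the components $C_1,\ldots,C_k$ of $G[S_t]$, you do get $Q(S_t,V\setminus S_t)=\sum_i Q(C_i,V\setminus C_i)$ (the cross-terms $Q(C_i,C_j)$ vanish since distinct components of an induced subgraph are non-adjacent in $G$), which gives $\Phi(S_t)\ge\min_i\Phi(C_i)$. But the minimizing component can carry $\pi$-mass far below $\pi(S_t)$, so this only certifies that $\Phi(\rho')$ is small at some scale $\rho'\ll\pi(S_t)$, not that $\Phi(\pi(S_t))$ is small. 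Since $\Phi(\rho)$ as defined here (minimum over \emph{connected} sets with $\pi(S)\in[\rho/2,\rho]$) carries no a priori monotonicity in $\rho$, the asserted inequality $\Phi(S_t)\ge c'\,\Phi(\min\{\pi(S_t),1-\pi(S_t)\})$ simply does not follow. Your fallback --- pass to a single component carrying a constant fraction of the mass --- fails for the same reason: nothing forces that dominant component to realize the small conductance; the bottleneck may sit entirely in a tiny component. Without this step you cannot justify $T_j-T_{j-1}=O(\Phi(2^{-j})^{-2})$ while $\pi(S_t)\approx 2^{-j}$, and the phase decomposition does not close.

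This mismatch of scales is precisely what makes the Fountoulakis--Reed theorem nontrivial rather than a corollary of Morris--Peres, and their proof is structured differently from what you sketch; it does not route through the evolving-set chain followed by a component-extraction step. A correct argument must either charge each slow time step to $\Phi^{-2}$ at the scale of the \emph{actual} bottleneck component and then bound the total number of steps charged to any one scale, or arrange for the bottleneck-witnessing sets to be connected by construction rather than by post-hoc extraction. As written, the sketch is an honest outline that identifies the obstacle but does not overcome it.
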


Throughout the rest of this section, we study the mixing time of the lazy random walk on the giant component $L_1$ in $G_2$ and write $e(S)$ and $e(S,R)$ for $e_{L_1}(S)$ and $e_{L_1}(S,R)$, respectively. 

We now aim to bound $\Phi(\rho)$. We require the following estimates on the stationary distribution.
\begin{claim}\label{l: pi(S) and |S|} 
For every $K_1>1$, there is $K_2=K_2(K_1)>0$ such that \whp the following holds for any set $S\subseteq V(L_1)$ connected in $L_1$:
\begin{enumerate}[label=\upshape(\alph*)]
    \item If $\pi(S)\in [K_2d/n,1]$, then $|S|\ge K_1\pi(S)n/K_2$.
    \item If $\pi(S)\le 1/2$, then $|S|\le (1-K_2^{-1})v(L_1)$.
\end{enumerate}
\end{claim}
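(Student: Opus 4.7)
Both parts of the claim reduce to controlling the average $L_1$-degree of $S$, since $\pi(S)=\sum_{v\in S}\deg_{L_1}(v)/(2e(L_1))$.

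My plan is to first establish two high-probability estimates.
\begin{enumerate}[label=\emph{(\roman*)}]
\item $c_1n\le e(L_1)\le c_2n$ for constants $c_1,c_2>0$ depending only on $c$; the upper bound via a Chernoff estimate on $e(Q^d_p)\sim\mathrm{Bin}(dn/2,c/d)$, and the lower bound via $e(L_1)\ge v(L_1)-1$ combined with \Cref{th: large tail deviation}.
\item There is a constant $k_0=k_0(c)$ such that the set $H:=\{v\in V(Q^d_p):\deg(v)\ge k_0\}$ satisfies $|H|\le n/d^{10}$ and $\sum_{v\in H}\deg(v)\le n/d^{10}$; both from Markov's inequality applied to the binomial degree distribution.
\end{enumerate}

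Part (b) is then immediate: writing $T=V(L_1)\setminus S$, the hypothesis $\pi(S)\le 1/2$ gives $\sum_{v\in T}\deg_{L_1}(v)\ge e(L_1)\ge c_1n$, and splitting by degree using (ii) yields $c_1n\le k_0|T|+n/d^{10}$, so $|T|\ge c_1n/(2k_0)\ge(c_1/(2k_0))v(L_1)$ for $d$ large. Choosing $K_2\ge 2k_0/c_1$ closes this case.

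For part (a), the target $|S|\ge K_1\pi(S)n/K_2$ rewrites as $\sum_{v\in S}\deg_{L_1}(v)\le(2c_2K_2/K_1)|S|$. In the regime $|S|\ge n/d^9$ the decomposition from part (b) directly gives $\sum\deg\le(k_0+1)|S|$, and the conclusion follows by choosing $K_2$ sufficiently large with respect to $k_0,c_2,K_1$.

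The main obstacle is the complementary range $|S|<n/d^9$, where (ii) is too weak since a small connected set could in principle consist of high-degree vertices. I would handle it by a tree-counting union bound: by \Cref{l: trees}, the number of $k$-vertex subtrees of $Q^d$ rooted at a fixed vertex is at most $(ed)^{k-1}$; conditional on such a tree $T$ being present in $Q^d_p$, the non-tree edges incident to $V(T)$ form an independent $\mathrm{Bin}(\le kd,p)$ variable of mean at most $ck$, and a Chernoff bound yields
\[
\mathbb{P}\bigl(\exists\,S\colon |S|=k,\;S\text{ conn.\ in }Q^d_p,\;{\textstyle\sum_{v\in S}\deg(v)}\ge Ck\bigr)\le n(ec)^{k-1}\exp(-\Omega_C(k)),
\]
the implicit constant in $\Omega_C$ growing like $C\log C$. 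For $C=C(c)$ sufficiently large, this is $o(1/n)$ once $k\ge k_\star(c,C)=\Theta(d/(C\log C))$, which provides a whp upper bound $\sum_{v\in S}\deg_{L_1}(v)\le C|S|$ on every connected $S$ of size at least~$k_\star$.

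The delicate point is that for constant $C$ the threshold $k_\star$ grows linearly in $d$, while the hypothesis $\pi(S)\ge K_2d/n$ combined with the whp bound $\Delta_{\max}(Q^d_p)=O(d/\log d)$ only forces $|S|\ge\Omega(K_2\log d)$, falling short of $k_\star$. Closing this residual gap is the key technical hurdle; I expect it to require a more structural argument showing that high-degree vertices of $Q^d_p$ are typically far apart—for instance via a second-moment estimate on edges between two vertices of $H$—so that connected subsets of $L_1$ of size between $\Omega(\log d)$ and $O(d)$ cannot have substantially elevated average degree.
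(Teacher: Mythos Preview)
Your claim (ii) is false as stated. For a \emph{constant} $k_0$, the probability $\mathbb P(\mathrm{Bin}(d,c/d)\ge k_0)$ converges to $\mathbb P(\mathrm{Po}(c)\ge k_0)>0$, so $|H|=\Theta(n)$, not $\le n/d^{10}$. What is true (and what part (b) actually needs) is the weaker bound $\sum_{v\in H}\deg(v)\le \epsilon n$ for any fixed $\epsilon>0$ once $k_0=k_0(c,\epsilon)$ is large enough; this follows from $\mathbb E[\mathrm{Po}(c)\cdot\mathds 1_{\mathrm{Po}(c)\ge k_0}]\to 0$. With that correction your argument for (b) goes through and matches the paper's (the paper phrases it via local convergence, but the content is the same Poisson-tail estimate).

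For part (a), your tree-counting approach is the right one and is exactly what the paper uses, but the gap you flag is an artefact of your parametrisation, not a genuine obstacle. You fix $k=|S|$ and a constant threshold $C$ on the average degree, and then need $k\gtrsim d/(C\log C)$ to beat the factor $n$. The paper instead parametrises by $\xi:=\pi(S)n/K_2\ge d$: the hypothesis $\pi(S)\ge K_2d/n$ directly forces the total degree of $S$ to be at least a constant times $K_2\xi$, hence the number of non-tree edges incident to $S$ is at least $\Theta(K_2\xi)-(k-1)\ge \Theta(K_2\xi)$ (since $k\le K_1\xi$ and $K_2\gg K_1$). The Chernoff factor is then
\[
\binom{kd}{\Theta(K_2\xi)}p^{\Theta(K_2\xi)}\le \left(\frac{ekc}{\Theta(K_2\xi)}\right)^{\Theta(K_2\xi)}\le \left(\frac{O(K_1)}{K_2}\right)^{\Theta(K_2\xi)},
\]
which decays exponentially in $\xi\ge d=\log_2 n$ and thus kills both the $n$ from the root choice and the $(ec)^{k}\le (ec)^{K_1\xi}$ from tree counting, uniformly over all $k\le K_1\xi$. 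No lower bound on $|S|$ is required: the $\Omega(K_2 d)$ excess edges are guaranteed by the $\pi$-hypothesis alone, and that is precisely what beats the union bound. Your proposed structural workaround about high-degree vertices being far apart is therefore unnecessary.
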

\begin{proof}
First of all, by \Cref{th: large tail deviation}, \whp we have $v(L_1)\ge 0.9yn$ with $y=y(c)$.
We assume this event in the sequel.

We prove (a) by using a first moment argument. 
Set $\xi = \xi(d)\in [d,n/K_2]$ and note that every set $S$ with $\pi(S)\ge K_2\xi/n$ satisfies
\[e(S)+e(S,V(L_1)\setminus S)\ge \frac{2e(S)+e(S,V(L_1)\setminus S)}{2} = \pi(S) e(L_1)\ge \pi(S) v(L_1)/2\ge K_2y \xi/3.\]
Hence, it is enough to show that, for $K_2$ suitably large, \textbf{whp} no set $S\subseteq L_1$ connected in $G_2$ satisfies simultaneously $e(S)+e(S,V(L_1)\setminus S)\ge K_2y\xi/3$ and $|S|\eqqcolon k< K_1 \xi$.
Indeed, by using \Cref{l: trees} and choosing $K_2$ large, the expected number of such sets is bounded from above by
\begin{equation}\label{eq:fsm_mom}
\begin{split}
\sum_{k=1}^{K_1\xi} n (\e d)^{k-1} p^{k-1} \cdot \binom{kd}{K_2y \xi/3-(k-1)} &p^{K_2y \xi/3-(k-1)}
\le \sum_{k=1}^{K_1\xi} n (\e c)^{k-1} \cdot \bigg(\frac{\e k c}{K_2y \xi/6}\bigg)^{K_2y \xi/6}\\
&\le K_1\xi\cdot n (\e c)^{K_1\xi} \cdot \bigg(\frac{6\e c K_1}{K_2 y}\bigg)^{K_2y \xi/6}=o(1),
\end{split}
\end{equation}
where the first inequality uses the fact that $\tbinom{a}{b} p^b\le (\e a p/b)^b$ for all integers $a\ge b\ge 1$ and the fact that $k\le K_1\xi\le  K_2y\xi/6$.
Part (a) now follows from Markov's inequality.

We move to part (b). 
It is enough to show that \whp no set $S\subseteq V(L_1)$ satisfies $\pi(V(L_1)\setminus S) \geq 1/2$ and $|V(L_1\setminus S)| \le K_2^{-1}v(L_1)$.
By~\cite[Theorem~3.1]{DKL25}, $G_2$ {\it converges locally in probability} to a Galton-Watson process with offspring distribution Poisson($c$); see the definition of the local convergence in probability in~\cite{DKL25}. 
Here, we will need the following simple corollary from \cite[Theorem~3.1]{DKL25}: for every integer $i\ge 0$, \textbf{whp} the number of vertices in $G_2$ with degree $i$ is at most $n\exp(-\Theta(\sqrt{\log d}))$-far from $n\cdot\mathbb{P}(\mathrm{Po}(c)=i)$.

Now, denote by $\ell = \ell(K_2)$ the unique integer satisfying
\[
\mathbb P(\mathrm{Po}(c)\ge \ell) > K_2^{-1} \ge \mathbb P(\mathrm{Po}(c)\ge \ell+1).
\]
Let $D$ be the set of all vertices in $Q^d$ which have degree at least $\ell$ in $G_2$. Since $K_2^{-1}<\mathbb{P}(\mathrm{Po}(c)\geq\ell)$, \textbf{whp} $K_2^{-1}n<|D|$.
Take any $U\subseteq V(L_1)$ of size at most $K_2^{-1}n<|D|$. Every vertex in $U\setminus D$ has degree strictly less than $\ell$. Therefore, \textbf{whp} for every such set $U$,
$$
 \sum_{u\in U} \deg_{L_1}(u)\le
 \sum_{u\in D} \deg (u)\le
 (1+o(1))\sum_{i=\ell}^{\infty}i\cdot \mathbb{P}(\mathrm{Po}(c)=i)\cdot n
=(1+o(1))n 
 \sum_{i=\ell}^{\infty} \e^{-c} \frac{c^{i}}{i!} \cdot i.
$$

Recall that $\mathbb{E}[\eta\1_{\eta\geq \ell}]\to 0$ as $\ell\to\infty$ when $\mathbb{E}|\eta|<\infty$.
Then, \textbf{whp}, for every set $U\subseteq V(L_1)$ such that $|U|\le K_2^{-1}v(L_1)\le K_2^{-1}n$ and large enough $K_2$, we have that 
\[\pi(U) = \frac{1}{2e(L_1)}\sum_{u\in U} \deg_{L_1}(u)\le \frac{1}{0.9y}\bigg((1+o(1))\sum_{i=\ell}^{\infty} \e^{-c} \frac{c^{i}}{i!} \cdot i\bigg) < \frac{1}{2},\]
which finishes the proof of part (b).
\end{proof}

\begin{remark}\label{rem:sparse}
We note that, by the first moment argument presented in~\eqref{eq:fsm_mom} with $K_2$ suitably large, it also follows that \textbf{whp}, for every set $S$ of size $|S|\ge K_1d$ connected in $L_1$, we have that $e(S,V(L_1))=2e(S)+e(S,V(L_1)\setminus S)\le K_2|S|$. 
\end{remark}

Next, we use Corollary \ref{cor: new expansion} to translate the said bounds on $\pi(S)$ to bounds on $\Phi(S)$.

\begin{lemma}\label{l: bounding Phi(S)}
There are constants $\gamma,\eps,K>0$ such that \textbf{whp}, for every set $S\subseteq V(L_1)$ connected in $L_1$, each of the following holds.
\begin{enumerate}[label=\upshape(\alph*)]
    \item If $\pi(S)\in [Kd/n,K n^{-\gamma}]$, then $\Phi(S)\ge \frac{\eps}{d\log d}\cdot\log_2\left(\frac{1}{3c\pi(S)}\right)$. \label{phi small and medium}
    \item If $\pi(S)\in [K n^{-\gamma},1/2]$, then $\Phi(S)\ge \frac{\eps}{d}\cdot\log_2\left(\frac{1}{3c\pi(S)}\right)$. \label{phi large}
\end{enumerate}
\end{lemma}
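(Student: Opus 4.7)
The plan is to translate the hypothesis on $\pi(S)$ into a range for $|S|$ via Claim \ref{l: pi(S) and |S|} and Remark \ref{rem:sparse}, then apply the appropriate regime of Corollary \ref{cor: new expansion} to bound the edge-boundary $e_{L_1}(S,V(L_1)\setminus S)$ from below, and finally convert this back into a lower bound on $\Phi(S)$.

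Let $K_*,\alpha,\beta,\mu$ denote the four constants furnished by Corollary \ref{cor: new expansion} (with $\nu=K_2^{-1}$ in its third regime). Fix $K_1\ge K_*$ large, let $K_2=K_2(K_1)$ be as in Claim \ref{l: pi(S) and |S|}, set $K=K_2$, and condition on all the relevant \textbf{whp} events, including $v(L_1)\ge 0.9yn$. For any $S\subseteq V(L_1)$ connected in $L_1$ with $\pi(S)\in[Kd/n,1/2]$, Claim \ref{l: pi(S) and |S|}(a) gives $|S|\ge K_1\pi(S)n/K_2\ge K_1 d$, while $|S|\le\sum_{v\in S}\deg_{L_1}(v)=2e(L_1)\pi(S)\le K_2 n\pi(S)$ follows from each vertex having degree at least one and $2e(L_1)\le K_2 v(L_1)\le K_2 n$ (Remark \ref{rem:sparse} applied to $V(L_1)$). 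In particular, $\log_2(n/|S|)\ge\log_2(1/\pi(S))-\log_2 K_2$. Since $\pi(V(L_1)\setminus S)\le 1$ and $2e(L_1)\pi(S)=\sum_{v\in S}\deg_{L_1}(v)\le K_2|S|$ by Remark \ref{rem:sparse},
\begin{align*}
\Phi(S)=\frac{e_{L_1}(S,V(L_1)\setminus S)}{4e(L_1)\pi(S)\pi(V(L_1)\setminus S)}\ge\frac{e_{L_1}(S,V(L_1)\setminus S)}{2K_2|S|}.
\end{align*}

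For part (a), pick $\gamma\in(0,\beta)$ small enough that $K_2 K n^{1-\gamma}\le n^{1-\beta}$ for large $d$, so $|S|\in[K_1 d,n^{1-\beta}]$. If $|S|\le n^{\alpha}$, Corollary \ref{cor: new expansion}(a) gives $e_{L_1}(S,V(L_1)\setminus S)\ge\mu|S|$, hence $\Phi(S)\ge\mu/(2K_2)=\Omega(1)$, which trivially dominates the target $\frac{\eps}{d\log d}\log_2(1/(3c\pi(S)))=O(1/\log d)$. If $|S|\in[n^{\alpha},n^{1-\beta}]$, Corollary \ref{cor: new expansion}(b) gives $\Phi(S)\ge\mu\log_2(n/|S|)/(2K_2 d\log d)$. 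Since $\pi(S)\le K n^{-\gamma}$ forces $\log_2(1/\pi(S))\ge\gamma d-\log_2 K\gg 2\log_2 K_2$ for large $d$, we have $\log_2(n/|S|)\ge\tfrac{1}{2}\log_2(1/\pi(S))\ge\tfrac{1}{2}\log_2(1/(3c\pi(S)))$, yielding the claim with $\eps=\mu/(4K_2)$.

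For part (b), choose $\gamma\in(0,\beta)$ and $K$ large enough that $|S|\ge K_1 K n^{1-\gamma}/K_2\ge n^{1-\beta}$, and note Claim \ref{l: pi(S) and |S|}(b) gives $|S|\le(1-K_2^{-1})v(L_1)$. Corollary \ref{cor: new expansion}(c) with $\nu=K_2^{-1}$ then yields $\Phi(S)\ge\mu\log_2(n/|S|)/(2K_2 d)$. The bound is trivial when $\log_2(1/(3c\pi(S)))\le 0$. If $\pi(S)\le 3c/K_2^2$, the same comparison as in part (a) gives $\log_2(n/|S|)\ge\tfrac{1}{2}\log_2(1/(3c\pi(S)))$. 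Otherwise $\pi(S)\in(3c/K_2^2,1/(3c))$, so $\log_2(1/(3c\pi(S)))$ is bounded above by a constant depending only on $c$ and $K_2$, while $|S|\le(1-K_2^{-1})(y+o(1))n$ forces $\log_2(n/|S|)\ge\log_2(1/((1-K_2^{-1})y))-o(1)$, a strictly positive constant since $y<1$. Hence $\log_2(n/|S|)/\log_2(1/(3c\pi(S)))$ is bounded below by a positive constant $\kappa=\kappa(c,K_2,y)$, and taking $\eps$ small enough closes all cases. The main technical hurdle is precisely this constant-order regime: when $\pi(S)$ is bounded away from both $0$ and $1/(3c)$, both $\log_2(n/|S|)$ and $\log_2(1/(3c\pi(S)))$ are $\Theta(1)$, and Claim \ref{l: pi(S) and |S|}(b) is what prevents $|S|$ from approaching $v(L_1)$ and keeps $\log_2(n/|S|)$ uniformly positive.
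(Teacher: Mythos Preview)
Your overall strategy matches the paper's: convert $\pi(S)$ into a range for $|S|$ via Claim~\ref{l: pi(S) and |S|} and Remark~\ref{rem:sparse}, invoke the appropriate regime of Corollary~\ref{cor: new expansion}, and translate back. However, there is a genuine error in part~(a): you write ``pick $\gamma\in(0,\beta)$ small enough that $K_2 K n^{1-\gamma}\le n^{1-\beta}$ for large $d$'', but for any $\gamma<\beta$ one has $n^{1-\gamma}>n^{1-\beta}$, so this inequality can never hold. Conversely, part~(b) genuinely requires $\gamma\le\beta$ (otherwise $|S|\ge K_1 n^{1-\gamma}$ need not exceed $n^{1-\beta}$), so you cannot simply flip the direction. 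The upshot is that in part~(a) you cannot force $|S|\le n^{1-\beta}$; the set $S$ may well fall into regime~(c) of Corollary~\ref{cor: new expansion}. The fix is easy --- regime~(c) gives a bound at least as strong as regime~(b), so the inequality $e_{L_1}(S,V(L_1)\setminus S)\ge \mu|S|\log_2(n/|S|)/(d\log d)$ in fact holds throughout $|S|\in[K_1 d,(1-K_2^{-1})v(L_1)]$ --- but as written the argument is broken.

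The paper sidesteps this entire issue, and all of your subsequent case analysis in part~(b), by using a different comparison between $|S|$ and $\pi(S)$. Since $S$ is connected in $L_1$ we have $2e(S)+e(S,V(L_1)\setminus S)\ge |S|-1$, and \textbf{whp} $e(L_1)\le 2cn$, so $\pi(S)\ge |S|/(3cn)$, i.e.\ $n/|S|\ge 1/(3c\pi(S))$. This gives $\log_2(n/|S|)\ge\log_2(1/(3c\pi(S)))$ in one line, with no factor-of-two loss and no need to worry about the constant-order regime where both sides are $\Theta(1)$. The bound $|S|\le K_2 n\pi(S)$ you derived from Remark~\ref{rem:sparse} is correct but points the wrong way for this purpose, and is what forces the extra casework.
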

\begin{proof}
Fix $K_1>1$ and $K_2=K_2(K_1)>0$ suitably large so that \Cref{l: pi(S) and |S|} and \Cref{rem:sparse} jointly hold. Further, fix $\gamma,\nu=1/K_2$ and $\eps_2(K_2)$ as in Corollary \ref{cor: new expansion}.
Then, \whp every set $S\subseteq V(L_1)$ which is connected in $L_1$ and has $\pi(S)\in [K_2d/n,1/2]$ satisfies $|S|\in [K_1d, (1-K_2^{-1})v(L_1)]$, by Claim \ref{l: pi(S) and |S|}. 
By Corollary \ref{cor: new expansion}, \textbf{whp}, for every set $S$ connected~in~$L_1$ with $|S|\in [K_1d,K_2n^{1-\gamma}]$, $e(S,V(L_1)\setminus S)\ge \frac{\eps_2 |S|\log_2(n/|S|)}{d\log d}$. 
By a combination of the last bound and \Cref{rem:sparse}, \textbf{whp}, for all such sets~$S$,
\begin{align}
    \Phi(S)=\frac{Q(S)}{\pi(S)\pi(V(L_1)\setminus S)}=\frac{e(S,V(L_1)\setminus S)}{2\left(2e(S)+e(S,V(L_1)\setminus S)\right)\pi(V(L_1)\setminus S)}
    &\ge \frac{\eps_2 |S|\log_2(n/|S|)/(d\log d)}{2K_2 |S|\pi(V(L_1)\setminus S)}\notag\\
    &\ge \frac{\eps_2\log_2(n/|S|)}{2K_2d\log d}. \label{eq: PHI(S)}
\end{align}
Now, since $S$ is connected in $L_1$, $2e(S)+e(S,V(L_1)\setminus S)\ge |S|-1$. Further, by standard bounds on the tails of the Binomial distribution, \textbf{whp} $e(L_1)\le e(Q^d_p)\le 2cn$. Thus, $\pi(S)\ge \frac{|S|-1}{2cn}\ge \frac{|S|}{3cn}$, and thus
\begin{align*}
    \Phi(S)\ge\frac{\eps_2}{2K_2d\log d}\cdot \log_2\left(\frac{1}{3c\pi(S)}\right).
\end{align*}

Next, fix a set $S$ with $\pi(S)\in [K_2 n^{-\gamma}/K_1,1/2]$. 
Then, \Cref{l: pi(S) and |S|} implies that \textbf{whp} for every such set $S$,
\[|S|\in [K_1 \pi(S)n/K_2, (1-K_2^{-1})v(L_1)]\subseteq [n^{1-\gamma},(1-K_2^{-1})v(L_1)].\] 
In turn, \Cref{cor: new expansion}(c) ensures that $e(S,V(L_1)\setminus S)\ge \frac{\eps_2 |S|\log_2(n/|S|)}{d}$.
By combining the latter observations with \Cref{rem:sparse}, we obtain (in a similar way to \eqref{eq: PHI(S)}) that \whp every set $S$ connected in $L_1$ with $\pi(S)\in[K_2 n^{-\gamma}/K_1,1/2]$ satisfies
\begin{align*}
    \Phi(S)\ge \frac{\eps_2 |S|\log_2(n/|S|)/d}{2K_2|S|\pi(V(L_1)\setminus S)}\ge  \frac{\eps_2}{2K_2d}\cdot\log_2\left(\frac{1}{3c\pi(S)}\right),
\end{align*}
where the last inequality used once again that $\pi(S)\ge \frac{|S|}{3cn}$
Choosing $\eps=\eps_2/(2K_2)$ and $K=\max\{K_1,K_2/K_1\}$ finishes the proof.
\end{proof}

We are now ready to prove Theorem \ref{th: main}\ref{th: mixing time}.
\begin{proof}[Proof of Theorem \ref{th: main}\ref{th: mixing time}]
As explained in the introduction, it suffices to show the upper bound.
By Theorem \ref{th: mixing-time-tool}, there is an absolute constant $K'>0$ such that
\begin{align}
     \tmix\le K'\sum_{j=1}^{\lceil \log_2(\pi_{\min}^{-1})\rceil}\Phi^{-2}\left(2^{-j}\right). \label{eq: A}
\end{align}
Fix $\gamma,\eps,K$ as in Lemma \ref{l: bounding Phi(S)}. 
For every integer $j$ such that $2^{-j}\in [2Kd/n,Kn^{-\gamma}]$ (corresponding to $j\in [j_1+1,j_2] := [\gamma d-\log_2 K,d-\log_2(2Kd)]$), we have that
\[\Phi^{-2}(2^{-j})\le \left(\frac{d\log d}{\eps \log_2(2^{j}/(3c))}\right)^2\le \frac{\log_2(3c)d^2 (\log d)^2}{\eps^2 j^2}.\]
Similarly, for all $j$ such that $2^{-j}\in (Kn^{-\gamma},1/2]$ (corresponding to $j\in [j_1]:=[1,\gamma d-\log_2 K]$), we have that
\[\Phi^{-2}(2^{-j})\le \frac{\log_2(3c)d^2}{\eps^2 j^2}.\]
Thus, \textbf{whp}
\begin{align}
    \sum_{j=1}^{\lceil \log_2(\pi_{\min}^{-1})\rceil}\Phi^{-2}\left(2^{-j}\right)=\sum_{j=1}^{j_1}\frac{\log_2(3c)d^2}{\eps^2 j^2}+\sum_{j=j_1+1}^{j_2}\frac{\log_2(3c)d^2 (\log d)^2}{\eps^2 j^2}+\sum_{j=j_2+1}^{\lceil \log_2(\pi_{\min}^{-1})\rceil}\Phi^{-2}\left(2^{-j}\right). \label{eq: B}
\end{align}
The first two terms are of order $O(d^2)$. We now estimate the third sum. Since $L_1$ is connected and \textbf{whp} $e(L_1) < 2cn$, \textbf{whp} for every $S\subseteq V(L_1)$ we have (similarly
\[
\Phi(S)=\Phi(V(L_1)\setminus V(S))= \frac{e(S,V(L_1)\setminus V(S))}{2(2e(V(L_1)\setminus V(S))+e(S,V(L_1)\setminus V(S)))\pi(S)}\ge \frac{1}{4e(L_1)\pi(S)} \geq \frac{1}{8cn \pi(S)} .\]
Hence, \textbf{whp} for every $S$ with $\pi(S) \leq 2^{-j}$, we have $\Phi(S) \ge \frac{2^j}{8cn}$ and so $\Phi\left(2^{-j}\right) \geq \frac{2^j}{8cn}$. Therefore, \textbf{whp}
\begin{align}
\sum_{j=j_2+1}^{\lceil \log_2(\pi_{\min}^{-1})\rceil}\Phi^{-2}\left(2^{-j}\right)\le  
    2 \left(\frac{8c n}{2^{j_2}}\right)^2 =O(d^2). \label{eq: D}
\end{align}
In total, the right hand side of~\eqref{eq: A} is of order $O(d^2)$ \whp and thus $\tmix = O(d^2)$ \whp, as desired.
\end{proof}

\section{Diameter}\label{s: diameter}
Recall that $\textbf{1}$ denotes the all-1 vertex and $\textbf{0}$ denotes the all-0 vertex. 
For a vertex $u$ and $i\in [d]$, we write $u(i)$ to denote the $i$-th coordinate of $u$.
For a pair of vertices $u,v\in Q^d$, we denote by $Q(u,v)$ the smallest subcube in $Q^d$ containing $u$ and $v$ (where we often treat $u$ as the all-0 vertex of $Q(u,v)$, and $v$ as the all-1 vertex of $Q(u,v)$), and write $\Id(u,v)$ for the set of coordinates where $u$ and $v$ coincide.

In \cite{ADEK23}, it was shown that if $p=\frac{c}{d}$ with $c>\e$, then with probability bounded away from zero, $\mathbf{0}$ is connected to $\mathbf{1}$ in a 'monotone' path of length $d$. It was further shown that if $p=\frac{c}{d}$ with $c<e$, then \textbf{whp} this does not hold. The next result, which is the key technical lemma in this section, shows that even when $1<c\le e$, there is a non-negligible probability of connecting $\mathbf{0}$ and $\mathbf{1}$ with a path of length $O(d)$. Its proof is delayed to Subsection \ref{sec:proofs_aux_lemmas}. 
\begin{lemma}\label{lem:distance_andipodas}
Fix $c>1$ and let $p=p(d)=c/d$. There are constants $K_1=K_1(c)>0$ and $K_2=K_2(c)>0$ such that $\mathbf{0}$ is connected to $\mathbf{1}$ in $Q_p^d$ by a path of length at most $K_1d$ with probability at least $d^{-K_2}$.
\end{lemma}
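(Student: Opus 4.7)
The plan is to establish the lemma via a Paley--Zygmund-type second-moment argument. Fix an odd integer $k = k(c) \ge 1$ (to be tuned to $c$) and let $\mathcal{P}$ be the family of self-avoiding walks from $\mathbf{0}$ to $\mathbf{1}$ in $Q^d$ encoded by sequences in $\mathcal{S}_d(k)$, i.e., where each coordinate is flipped exactly $k$ times. The parity of $k$ ensures each walk ends at $\mathbf{1}$, and every $P \in \mathcal{P}$ has length $kd$, so we set $K_1 := k$. With $X := |\{P \in \mathcal{P} : P \subseteq Q^d_p\}|$, the Paley--Zygmund inequality gives $\mathbb{P}(X > 0) \ge (\mathbb{E}X)^2/\mathbb{E}[X^2]$, so it suffices to show that this ratio is at least $d^{-K_2}$ for some $K_2 = K_2(c)$.

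For the first moment (Section 7.1), since $|\mathcal{S}_d(k)| = (kd)!/(k!)^d$ and each sequence encodes one walk, Stirling gives
\[
\frac{(kd)!}{(k!)^d}\, p^{kd} \sim \sqrt{2\pi kd}\cdot\left(\frac{c^{k}k^k}{\e^k \, k!}\right)^d.
\]
The factor $c^k k^k/(\e^k k!)$ exceeds $1$ for any sufficiently large $k$ (for $c > 1$, by comparing with Stirling's estimate $k!\sim\sqrt{2\pi k}(k/\e)^k$), so we may fix such an odd $k = k(c)$ and obtain exponential growth on the right-hand side. To pass from arbitrary walks to self-avoiding walks, I would apply Lemma 3.4 to the function $f(\sigma)$ counting self-intersections of the walk encoded by $\sigma \in \mathcal{S}_d(k)$: a single switching alters $f$ by $O_k(1)$, and $\mathbb{E}[f] = O_k(d)$ by a direct pair-coincidence enumeration, so Lemma 3.4 shows that a positive fraction of sequences encode self-avoiding walks. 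Combining these bounds yields $\mathbb{E}[X] \ge d^{-O(1)}$ (in fact exponentially large).

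For the second moment (Section 7.2), decompose $\mathbb{E}[X^2] = \sum_{s \ge 0} N_s p^{2kd - s}$, where $N_s$ counts ordered pairs $(P_1, P_2) \in \mathcal{P}^2$ sharing exactly $s$ edges; the term $s = 0$ contributes $(\mathbb{E}X)^2(1 + o(1))$. The goal is to show $\sum_{s \ge 1} N_s p^{2kd - s} \le d^{O(1)} (\mathbb{E}X)^2$. To bound $N_s$, I would parameterise a pair $(P_1, P_2)$ by its common edge set $E(P_1) \cap E(P_2) \subseteq E(Q^d)$ (a union of at most $O_k(1)$ disjoint paths in $Q^d$) together with the completions of $P_1$ and $P_2$ around this common skeleton, and count completions using Lemma 3.4 applied to the sub-sequences encoding each completion separately.

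Combining the first- and second-moment estimates, Paley--Zygmund delivers $\mathbb{P}(X > 0) \ge d^{-K_2}$, and every $P \in \mathcal{P}$ has length $kd = K_1 d$. The main obstacle will be the second-moment estimate: pairs of paths with large overlap are combinatorially suppressed but simultaneously gain factors of $p^{-s}$, so a careful enumeration of overlap patterns---inside the highly symmetric family $\mathcal{S}_d(k)$ where Lemma 3.4 applies uniformly to both paths---is the technical heart of the argument, and motivates the choice to work with paths that flip each coordinate exactly $k$ times.
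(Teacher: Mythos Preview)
Your overall strategy---a Paley--Zygmund second-moment argument on paths in $\mathcal{S}_d(k)$ with $k$ odd and large---matches the paper's exactly, and your first-moment reasoning is essentially correct (though your Lipschitz claim for the self-intersection count under switchings is dubious: a single swap can affect the parity structure of many intervals simultaneously, so $f$ need not be $O_k(1)$-Lipschitz; the paper instead bounds non-tameness probability directly via case analysis on interval lengths).

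The genuine gap is in the second moment. Your assertion that $E(P_1)\cap E(P_2)$ is ``a union of at most $O_k(1)$ disjoint paths'' is false: the common edge set can split into up to $\Theta(kd)$ maximal intervals, and this is precisely what makes the enumeration delicate. When the overlap consists of $\ell$ short intervals totalling $r$ edges, the encoding cost is roughly $\binom{kd}{2\ell}(\ell!)2^\ell$ for choosing and ordering the intervals, times the cost of filling in the $\ell$ gaps of $P_2$; meanwhile the gain from the overlap is only $(d/c)^r$. For $\ell$ comparable to $r$ this does not close without additional structure. The paper resolves this by restricting to \emph{tame} paths (Definition~\ref{def:tame}): property \textbf{P1} forces every gap of $P_2$ to have endpoints differing in at least two coordinates (so $x_i\ge 2$, giving $2\ell\le x_s+x_b$), and property \textbf{P2} forces short gaps to have $y_i\le x_i$ (so most of a short gap's sequence is determined by its endpoints). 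These two constraints are what drive the bound $N_r\le ((1+\eta)/d)^r d^{O_k(1)}|\mathcal{P}'_d(k)|$ in Lemma~\ref{lem:secondmoment}; without them, your vague plan of ``counting completions using Lemma~3.4'' will not produce the required $(1/d)^r$ saving against the encoding cost.
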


Next, we bootstrap the conclusion of \Cref{lem:distance_andipodas} to any pair of vertices in the cube $Q^d$.

\begin{lemma}\label{lem:bootstrap_7.1}
Fix $c>1$ and let $p=p(d)=c/d$. There are constants $K_3=K_3(c)>0$ and $K_4=K_4(c)>0$ such that, for every pair of vertices $u,v\in V(Q^d)$, with probability at least $d^{-K_4}$ there is a path of length at most $K_3d$ between $u$ and $v$ in $Q^d_p$. 
\end{lemma}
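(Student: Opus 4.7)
By the vertex-transitivity of $\mathrm{Aut}(Q^d)$ on pairs of vertices at a given Hamming distance, I may assume $u=\mathbf{0}$; write $D:=\mathrm{supp}(v)$ and $h:=|D|$, so the problem depends only on $h$. The plan is to carry out a first/second moment argument on a carefully chosen family $\cP=\cP(u,v)$ of self-avoiding paths in $Q^d$ from $u$ to $v$ of length exactly $L:=K_3d$, for a suitable constant $K_3=K_3(c)>0$; this matches the outline's description of the proof as relying on ``switching and enumeration arguments, focusing on specific types of paths which are easier to analyse'', and parallels the approach announced for \Cref{lem:distance_andipodas} in the antipodal case.

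A path in $\cP$ is encoded by its flip sequence $\sigma\in[d]^L$ in which every $j\in D$ appears an odd number of times and every $j\in[d]\setminus D$ appears an even number of times. Tractability is achieved by imposing a canonical structural constraint, such as a prescribed number of flips per coordinate (placing flip sequences in a set analogous to $\cS_d(k)$ from the preliminaries) or a canonical initial segment: for example, a short monotone piece reaching the antipode $\mathbf{1}$, followed by a controlled return to $v$. For the first moment, direct enumeration should yield $|\cP|\gtrsim d^L/\mathrm{poly}(d)$, and hence
\[
\mathbb{E}[|\cP\cap Q^d_p|]=|\cP|\,p^L\ge d^{-K_5}
\]
for some constant $K_5=K_5(c)>0$, where the factor $(dp)^L=c^L$ is bounded by a constant raised to a linear-in-$d$ power. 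For the second moment,
\[
\mathbb{E}[|\cP\cap Q^d_p|^2]=\sum_{(P,P')\in\cP^2}p^{|E(P)\cup E(P')|}=\sum_{k=0}^{L}N_k\,p^{2L-k},
\]
where $N_k:=\#\{(P,P')\in\cP^2:|E(P)\cap E(P')|=k\}$, I would bound $N_k$ by a switching argument: starting from a pair $(P,P')$ with overlap $k+1$, swapping a prescribed shared edge in $P'$ with a non-shared edge produces a pair of overlap $k$, and each inverse switch has bounded multiplicity, yielding a geometric decay $N_k\le N_0\,\rho^k$ for some $\rho$ small enough to keep $\sum_k N_k p^{2L-k}$ comparable to $\mathbb{E}[|\cP\cap Q^d_p|]^2$. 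Paley--Zygmund then gives
\[
\mathbb{P}\bigl(|\cP\cap Q^d_p|\ge 1\bigr)\ge\frac{\mathbb{E}[|\cP\cap Q^d_p|]^2}{\mathbb{E}[|\cP\cap Q^d_p|^2]}\ge d^{-K_4},
\]
and any realisation with $|\cP\cap Q^d_p|\ge 1$ contains a $u$-$v$ path of length at most $L=K_3d$ in $Q^d_p$.

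The main obstacle will be the second moment estimate, and in particular controlling pairs of paths with large shared-edge structure: the switching needs to be calibrated against both the self-avoidance constraint and the canonical shape imposed on $\cP$, so that the ratio $\rho$ above is genuinely bounded away from the inverse of $p$. A secondary difficulty is choosing the canonical family $\cP$ so as to yield the first-moment lower bound uniformly in $h$, which may require adapting the flip-count profile or the canonical intermediate vertex according to the regime of $h$ (for instance, distinguishing between $h$ small, $h$ comparable to $d/c$, and $h$ close to $d$), while keeping the final constants $K_3,K_4$ dependent only on $c$.
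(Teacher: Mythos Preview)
Your proposal is a sketch rather than a proof: you do not actually specify the canonical family $\cP$, you do not carry out the switching argument, and you explicitly flag the second moment bound as an unresolved ``main obstacle''. More importantly, the approach is much harder than necessary and takes a fundamentally different route from the paper.

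The paper does \emph{not} redo a second moment computation for general pairs $u,v$. Instead, it reduces to the antipodal case (\Cref{lem:distance_andipodas}) via a short chain argument. Fixing small $\eps=\eps(c)$ with $(1-\eps/2)c>1$, one shows (Claim~7.3) that any pair $u,v$ can be joined by a sequence $u=v_1,\ldots,v_\ell=v$ with $\ell\le 2/\eps$ where consecutive $v_i,v_{i+1}$ differ in at least $(1-\eps/2)d$ coordinates. Then $Q(v_i,v_{i+1})$ is a subcube of dimension $D\ge(1-\eps/2)d$ with $pD\ge(1-\eps/2)c>1$, so \Cref{lem:distance_andipodas} applied inside this subcube gives a $v_i$--$v_{i+1}$ path of length $O_c(d)$ with probability at least $d^{-K_2'}$. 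Since these are all increasing events, Harris' inequality (\Cref{lem:Harris}) multiplies the probabilities, yielding a $u$--$v$ path of length at most $\ell K_1'd$ with probability at least $d^{-\ell K_2'}$. The constants $K_3,K_4$ are then $2K_1'/\eps$ and $2K_2'/\eps$.

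This reduction completely sidesteps the uniformity-in-$h$ issue you identified: the chain construction works regardless of the Hamming distance, and all the delicate second moment work (tame paths, the $\cS_d(k)$ enumeration, the overlap analysis) is confined to \Cref{lem:distance_andipodas} where the two endpoints are antipodal in their subcube. Your plan amounts to reproving a strictly harder version of \Cref{lem:distance_andipodas}, which is both unnecessary and, as you yourself note, fraught with technical difficulties that you have not resolved.
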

\begin{proof}
Fix a suitably small constant $\eps=\eps(c)$ (such that, in particular, $(1-\eps/2)c>1$). 
\begin{claim}\label{cl:chain}
For every pair of vertices $u,v\in V(Q^d)$, there is a sequence of vertices $u=v_1,v_2,...,v_\ell=v$ with $\ell\leq 1+2/\eps$ such that, for every $i\in [\ell-1]$, $v_i$ and $v_{i+1}$ differ in at least $(1-\eps/2)d$ coordinates. 
\end{claim}
\begin{proof}[Proof of \Cref{cl:chain}]
First, consider a pair of vertices $u,v$ for which the set of coordinates $J\subseteq [d]$ where they differ has size $|J|\le \eps d$. 
Then $u$ and $v$ satisfy the statement with $\ell=3$: indeed, there is a vertex $w$ which differs from both $u$ and $v$ in each coordinate in $[d]\setminus J$ and in $|J|/2$ of the coordinates in $J$.  
Moreover, for every pair of vertices $u,v$, there exists a sequence of vertices $u=v_1',v_2',...,v_{\ell'}'=v$ with $\ell'\le 1+1/\eps$ where every pair of consecutive vertices differ in at most $\eps d$ coordinates. 
Thus, for every pair of vertices $u,v$, we can extend the sequence $u=v_1',\ldots, v_{\ell'}'=v$ to a sequence $u=v_1', v_{1,2}', v_2', \ldots,v_{\ell'-1}', v_{\ell'-1,\ell'}'  v_{\ell'}'$ with $2\ell'-1\le 1+2/\eps$ vertices where $v_{i,i+1}'$ is a vertex which differs from both $v_i'$ and $v_{i+1}'$ by at least $(1-\eps/2)d$ coordinates. This completes the proof of the claim.
\end{proof}

Fix a pair of vertices $u,v$ and a sequence $v_1,\ldots,v_\ell$ as in \Cref{cl:chain}.
Then, for every $i\in [\ell-1]$, $Q(v_i,v_{i+1})$ has dimension at least $(1-\eps/2)d$.
Set $c'\coloneqq p\cdot (1-\eps/2)d =(1-\eps/2)c$ and note that $c'>1$ by the choice of $\eps$. 
By \Cref{lem:distance_andipodas} applied to $Q(v_i,v_{i+1})$ (whose dimension is $D\in [(1-\eps/2)d,d]$ and satisfies $pD\geq c'$), there are constants $K_1'=K_1'(c')>0$ and $K_2'=K_2'(c)>0$ such that $v_i$ and $v_{i+1}$ are connected by a path of length at most $K_1'D\leq K_1'd$ in $Q(v_i,v_{i+1})_p$ with probability at least $D^{-K_2'}\geq d^{-K_2'}$. 
As each of the latter events is increasing, Harris' inequality (\Cref{lem:Harris}) implies that the probability of having a path of length at most $\ell K_1' d$ between $u$ and $v$ is at least $d^{-\ell K_2'}$.
Setting $K_3= (1+2/\eps)K_1'\ge \ell K_1'$ and $K_4= (1+2/\eps)K_2'\ge \ell K_2'$ finishes the proof.
\end{proof}

Denote by $\cI$ the set of vertex pairs $(u,v)$ with $\Id(u,v)\ge d/3$.
Next, using \Cref{lem:bootstrap_7.1}, we prove that typically, for all pairs of vertices $u,v$ in $\cI$ where both $u$ and $v$ can reach many vertices via short paths in $Q^d_p$, 
the distance between $u$ and $v$ in $Q^d_p$ is $O(d)$.

\begin{lemma}\label{lem:diameter_for a pair}
Fix $c>1$ and let $p=p(d)=c/d$. There is a constant $K_5=K_5(c)>0$ such that the following holds \textbf{whp}: for every pair $(u,v)\in \cI$, if for each $w\in \{u,v\}$ the number of vertices within distance $(\log d)^5$ from $w$ in $Q_p^d$ is at least $\exp((\log d)^4)$, then there is a path of length at most $K_5d$ between $u$ and $v$ in $Q^d_p$.
\end{lemma}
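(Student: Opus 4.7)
The plan is to amplify the inverse-polynomial bound of \Cref{lem:bootstrap_7.1} into a stretched-exponential tail that survives a union bound over the $\le 4^d$ pairs in $\cI$. Fix $\eps=\eps(c)>0$ with $c(1/c+\eps)>1$, and set $D:=\lceil(1/c+\eps)d\rceil$. Within any sub-hypercube of dimension at least $D$, the percolated graph has effective density $pD>1$, so \Cref{lem:bootstrap_7.1} applies there with constants depending only on $c$: for every pair of vertices in the subcube, a path of length $\le K_3 d$ exists in the subcube-induced percolated graph with probability $\ge d^{-K_4}$.

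For a fixed pair $(u,v)\in\cI$ satisfying the hypothesis, the plan is to exhibit $M:=d^{K_4+10}$ subcubes $C_1,\ldots,C_M\subseteq Q^d$ of dimension $\ge D$, pairwise vertex-disjoint, each having its ``all-$0$ vertex'' (in the subcube's natural encoding) in $A_u := \{w : d_{Q_p^d}(u,w) \le (\log d)^5\}$ and its ``all-$1$ vertex'' in the analogously defined $A_v$. To decouple from the random definition of $A_u$ and $A_v$, one first exposes the edges inside the Hamming balls $B_H((\log d)^5+1,u)\cup B_H((\log d)^5+1,v)$, which fix $A_u$ and $A_v$, and requires each $C_i$ to use edges disjoint from this revealed set; this is feasible since the revealed edges number only $2^{o(d)}\ll 2^D$. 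Conditioning on $A_u,A_v$, the events $E_i:=\{\text{all-}0\text{ and all-}1\text{ corners of }C_i\text{ are connected in }(C_i)_p\text{ by a path of length}\le K_3 d\}$ are mutually independent, each of probability $\ge d^{-K_4}$, hence $\mathbb{P}\bigl(\bigcap_i\overline{E_i}\bigr)\le(1-d^{-K_4})^M\le\e^{-d^{10}}$. A union bound over $|\cI|\le 4^d$ yields the lemma: for every qualifying pair, some $E_i$ holds, and concatenating the length-$\le(\log d)^5$ paths from $u$ to the all-$0$ corner of $C_i$ (which exists since that corner lies in $A_u$) and from the all-$1$ corner of $C_i$ to $v$ with the length-$\le K_3 d$ path inside $(C_i)_p$ gives a $u$-$v$ path of length at most $K_3 d+2(\log d)^5\le K_5 d$.

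The main obstacle is constructing the subcubes. Writing $I:=\Id(u,v)$ and $F:=[d]\setminus I$, we have $|I|\ge d/2$. When $|F|\ge D$, the subcube $Q(u,v)$ already has dimension $\ge D$, and one produces variants $C_i:=Q(u_i,v_i)$ for pairs $(u_i,v_i)=(u\oplus e_{S_i^u},\,v\oplus e_{S_i^v})\in A_u\times A_v$ whose perturbation supports $S_i^u,S_i^v$ lie in pairwise disjoint coordinate blocks of $I$, forcing vertex-disjointness. When $|F|<D$, one instead takes $C_i$ with free-coordinate set $F\cup T_i$ for pairwise disjoint $T_i\subseteq I$ of size $D-|F|$, and shifts the fixed values of $C_i$ by small vectors $a_i$ of Hamming weight $\le(\log d)^5$ chosen so that different $a_i$'s disagree on some coordinate outside $\bigcup_j T_j$, which again forces vertex-disjointness. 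In each case, the super-polynomial hypothesis $|A_u|,|A_v|\ge\exp((\log d)^4)$ lets one realize the required pole configurations through a greedy pigeonhole argument on the perturbation supports: the number of distinct patterns to realize is polynomial in $d$, while the number of elements of $A_u$ (respectively $A_v$) incompatible with each specific constraint is bounded by a polynomial, leaving super-polynomially many valid candidates at every step. The trickiest sub-case arises when $d_H(u,v)$ is so small that no pair $(u_i,v_i)\in A_u\times A_v$ yields a subcube of dimension $\ge D$; there one detours through an intermediate vertex $w_i$ at Hamming distance $\ge D$ from both $u$ and $v$, replacing each $C_i$ by the pair $(Q(u_i,w_i),Q(w_i,v_i))$ and running the independence/amplification argument on these pairs (each pair still using disjoint edge-sets across different $i$).
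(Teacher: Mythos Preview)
Your overall strategy (amplify the $d^{-K_4}$ bound across many disjoint subcubes) is the right one, but the construction of the subcubes has genuine gaps that cannot be patched within your framework.

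First, your disjointness mechanism collapses under the required count. You ask for $M=d^{K_4+10}$ subcubes whose perturbation supports lie in \emph{pairwise disjoint} coordinate blocks of $I$; but $|I|\le d$, so there are at most $d$ nonempty disjoint blocks, let alone $d^{K_4+10}$. The same obstruction reappears in your second case: pairwise disjoint $T_i\subseteq I$ of size $D-|F|$ force $M(D-|F|)\le d$, so when $D-|F|=\Theta(d)$ you get only $O(1)$ subcubes. More structurally, once the free-coordinate set of a subcube is fixed, its two antipodal corners are related by the \emph{same} shift of the fixed coordinates; hence simultaneously placing one corner in $A_u$ and the other in $A_v$ requires a single vector $a$ with $u\oplus a\in A_u$ and $v\oplus a\in A_v$. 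The sets $A_u-u$ and $A_v-v$ are each of size $\ge\exp((\log d)^4)$ inside a Hamming ball of size $d^{(\log d)^5}$, determined by essentially independent randomness near $u$ and near $v$; nothing prevents their intersection from being $\{0\}$, so your pigeonhole sketch does not produce the needed family.

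Second, your decoupling step is inconsistent with the construction. If a corner $u_i$ of $C_i$ lies in $A_u$, then $u_i\in B_H((\log d)^5,u)$, and every edge of $C_i$ incident to $u_i$ lies inside $B_H((\log d)^5+1,u)$---precisely the edges you have already revealed to determine $A_u$. So $C_i$ cannot be edge-disjoint from the revealed set while still touching $A_u$, and the events $E_i$ are not conditionally independent.

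The paper avoids both obstacles by \emph{not} placing subcube corners in $A_u,A_v$. It chooses tiny blocks $I_j\subseteq\Id(u,v)$ of size $(\log d)^2/2$ and proves (Claim~\ref{claim:smallerevents}) that the large-ball hypothesis $\cE_\varnothing$ forces the restricted event $\cE_{I_j}$ for some $j$: the ball around $w$ is still large inside the cube $Q(w,I_j)$. That restricted cube is vertex-disjoint from $r\ge 2^{(\log d)^2/8}$ shifted subcubes $Q_{t,j}$ indexed by half-subsets of $L_j\subseteq I_j$. One then reveals all edges \emph{outside} the $Q_{t,j}$; this suffices both to certify $\cE_{I_j}$ and (Claim~\ref{claim:join_vertices_tocubes}) to connect $u$ and $v$ to every $Q_{t,j}$ by short paths, leaving the interiors of the $Q_{t,j}$ fresh and mutually independent for the application of Lemma~\ref{lem:bootstrap_7.1}. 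The separation of ``connecting paths'' from ``trial subcubes'' is the missing idea.
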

\begin{proof}
Fix a pair of vertices $(u,v)\in \cI$. For a vertex $w$ and a subset $I\subseteq [d]$, we define $Q(w,I)$ to be the cube of dimension $d-|I|$ containing all vertices which agree with $w$ in each coordinate in $I$ (and where the other coordinates vary). 
In addition, denote by $\mathcal{E}_I=\mathcal{E}_I(u,v)$ the event that, for each $w\in \{u,v\}$, the number of vertices within distance $(\log d)^5$ from $w$ in $Q(w,I)_p$ is at least $\exp((\log d)^4)/\max\{|I|,1\}$. Observe that $\cE_{\varnothing}$ is the event that for each $w\in \{u,v\}$, the number of vertices within distance $(\log d)^5$ from $w$ in $Q^d_p$ is at least $\exp((\log d)^4)$.

Set $s=d/(\log d)^2$ and consider a family of disjoint subsets $I_1,I_2,\ldots,I_s$ of $\Id(u,v)$, each of size $d/(3s)=(\log d)^2/3$.
 
\begin{claim}\label{claim:smallerevents}
$\cE_\varnothing\subseteq \bigcup_{j=1}^s \cE_{I_j}$.
\end{claim}
\begin{proof}
Assume that the event $\mathcal{E}_\varnothing$ holds. 
Then, for each $w\in \{u,v\}$, there is a tree rooted at $w$ in $Q^d_p$, denoted by $T_w$, which has depth at most $(\log d)^5$ and contains $\exp((\log d)^4)$ vertices.
For each vertex $x\in V(T_w)$, we denote by $P_x$ the unique path from $x$ to the root $w$ in $T_w$. 
We also denote by $L_w(x)$ the set of coordinates $i$ such that some vertex $x'$ on $P_x$ satisfies $x'(i)\neq w(i)$.
Note that $|L_w(x)|\le e(P_x)\le (\log d)^5$. Furthermore, if $L_w(x)$ does not intersect $I_j$, then $P_x\subseteq Q(w,I_j)_p$.
In particular, for every $x\in V(T_w)$, $P_x\not\subseteq Q(w,I_j)_p$ for at most $(\log d)^5$ indices $j\in [s]$.

For each $w\in \{u,v\}$, denote by $B_w$ the set of indices $j\in [s]$ for which the event $\cE_{I_j}$ fails for $w$. The above implies that, for each $w\in \{u,v\}$,
\[ (\log d)^5 \cdot |V(T_w)|  \ge \sum_{j\in B_w} \left(1-\frac{1}{|I_j|}\right)|V(T_w)|\ge \frac{|B_w||V(T_w)|}{2},\]
where the first inequality is obtained by counting the number of pairs $(I_j,x)$ such that $P_x\not\subseteq Q(w,I_j)_p$ in two different ways: for the upper bound in the left hand side, we fix $x$ and bound the number of choices of $I_j$ by $(\log d)^5$, and for the lower bound in the right hand side, we fix $j\in B_w$ and use the definition of $B_w$ to bound from below the number of vertices in $T_w$ which are not reachable in $Q(w,I_j)_p$ by $|V(T_w)|-|V(T_w)|/|I_j|$.
Hence, $|B_w|\leq 2(\log d)^5$ for each $w\in \{u,v\}$ and therefore, the event $\mathcal{E}_{I_j}$ holds for at least $s-4(\log d)^5\ge 1$ indices $j\in [s]$.    
\end{proof}

Next, we fix $j\in [s]$ and let $L_j\coloneqq L_j(u,v)\subseteq I_j$ be an arbitrary subset of size $|I_j|/2=d/6s=(\log d)^2/6$.
We further define $r=r(d):= \binom{d/6s}{d/12s}\ge 2^{d/12s}$ and denote by $L_{1,j},L_{2,j},...,L_{r,j}$ the subsets of $L_j$ of size $|L_j|/2=d/12s$. 
Also, for each $t\in [r]$, denote by $Q_{t,j}$ the subcube of dimension $d-|I_j|$ with vertex set 
\[\{z\in V(Q^d): z(i)=u(i) \text{ for every }i\in I_j\setminus L_{t,j} \text{ and } z(i)\neq u(i) \text{ for every } i\in L_{t,j}\},\]
noting that the coordinates outside $I_j$ vary and that for every $i\in I_j$, $u(i)=v(i)$. Further observe that the cubes $Q_{1,j},\ldots,Q_{r,j}$ are pairwise vertex-disjoint.

\begin{claim}\label{claim:join_vertices_tocubes}
For every $w\in \{u,v\}$ and $t\in [r]$, the probability that $\mathcal{E}_{I_j}$ occurs and there does not exists a path $P(w,t,j)$ of length at most $d$ from $w$ to $Q_{t,j}$ in $Q^d_p$ which is edge-disjoint from all of the hypercubes $Q_{1,j},Q_{2,j},\ldots Q_{r,j}$ is $o(n^{-4})$.
\end{claim}
\begin{proof}
Fix $w\in \{u,v\}$ and $t\in [r]$.
Reveal the edges in $Q(w,I_j)_p$ and assume that the set of vertices $S_w$ within distance $(\log d)^5$ from $w$ in $Q(w,I_j)_p$ has size at least $\exp((\log d)^4)/|I_j|$ (an event which is implied by $\mathcal{E}_{I_j}$). 
For $z\in S_w$, we denote by $P_{t,z}$ a shortest path from $z$ to $Q_{t,j}$ in $Q^d$. 
By construction, these $|S_w|$ paths are of length $|L_{t,j}|=d/12s$, and edge-disjoint from each of the cubes $(Q_{l,j})_{l\neq t}$. Furthermore, for all distinct $z,z'\in S_w$, if we let $i\in [d]$ be such that $z(i)\neq z'(i)$, then $i\notin \Id(z,z')\supset I_j$. It is clear that all vertices in $P_{t,z}$ agree on coordinates outside of $I_j$. Thus, every vertex on $P_{t,z}$ differs from every vertex 
on $P_{t,z'}$ on their $i$-th coordinate (at least), that is, the $|S_w|$ paths are pairwise vertex-disjoint. Hence, since all paths $P_{t,z}$, $z\in S_w$, are edge-disjoint from $Q(w,I_j)$, and thus their appearance in $Q_p^d$ is independent of $Q(w,I_j)_p$, the probability that none of these $|S_w|$ vertex-disjoint paths belongs to $Q_p^d$ is at most
\begin{equation*}
    (1-p^{d/12s})^{\exp((\log d)^4/6)}\leq \exp\left(-(c/d)^{(\log d)^2/12}\exp((\log d)^4/6)\right) = o(n^{-4}).\qedhere
\end{equation*}
\end{proof}

Now, let us finish the proof of Lemma~\ref{lem:diameter_for a pair}. For $(u,v)\in \cI$, $j\in [s]$, $t\in [r]$, and a constant $K'>0$, denote by $\cF(u,v,j,t,K')$ the event that there exists $u',v'\in Q_{t,j}$ such that each of the following holds:
\begin{itemize}
    \item there are paths of length at most $d$ from $u$ to $u'$ and from $v$ to $v'$ in $Q^d_p$ which do not intersect any of the hypercubes $Q_{1,j},Q_{2,j},\ldots Q_{r,j}$, and
    \item there is a path of length at most $K'd$ between $u'$ and $v'$ in $(Q_{t,j})_p$.
\end{itemize}
Observe that $\cF(u,v,j,t,K')$ implies that there is a path between $u$ and $v$ in $Q^d_p$ of length at most $(K'+2)d$. Thus, for every constant $K'>0$,
the statement of the lemma fails with probability at most 
\begin{align}\label{eq:001}
    \sum_{(u,v)\in \cI}  \mathbb{P}\bigg(\cE_{\varnothing}(u,v)\cap \bigcap_{j\in [s]} \bigcap_{t\in[r]}\cF(u,v,j,t,K')^c\bigg)
\end{align}
Thereafter, by \Cref{claim:smallerevents}, we have that
\begin{align*}
\cE_{\varnothing}(u,v)\cap \bigcap_{j\in [s]} \bigcap_{t\in[r]}\cF(u,v,j,t,K')^c
&\subseteq \bigg(\bigcup_{j'\in [s]}\cE_{I_{j'}}(u,v)\bigg)\cap \bigcap_{j\in [s]} \bigcap_{t\in[r]}\cF(u,v,j,t,K')^c\\
&\subseteq \bigcup_{j'\in [s]}\bigg(\cE_{I_{j'}}(u,v)\cap  \bigcap_{t\in[r]}\cF(u,v,j',t,K')^c  \bigg).
\end{align*}
Hence, the expression in \eqref{eq:001} is bounded above by 
\begin{align}\label{eq:002}
    \sum_{(u,v)\in \cI}  \sum_{j\in[s]}\mathbb{P}\left(\cE_{I_j}\cap \bigcap_{t\in[r]}\cF(u,v,j,t,K')^c\right).
\end{align}

To that end, we estimate the probability of $\cE_{I_j}\cap \cF(u,v,j,t,K')^c$. 
We reveal all the edges of $Q_p^d$ lying outside the cubes $Q_{1,j},Q_{2,j},\ldots Q_{r,j}$. 
By \Cref{claim:join_vertices_tocubes}, for all $t\in [r]$,
with probability $1-o(n^{-4})$, the event $\cE_{I_j}$ holds and there exist paths from $u$ to $Q_{t,j}$ and from $v$ to $Q_{t,j}$ of lengths at most $d$ spanned by the set of revealed edges.
For every $t\in [r]$, denote by $u_t$ and $v_t$ the endpoints of these two paths in $Q_{t,j}$.
Then, \Cref{lem:bootstrap_7.1} implies that there exist constants $K_3',K_4'$ such that we can further connect $u_t$ and $v_t$ in $(Q_{t,j})_p$ by a path of length at most $K_3'd$ with probability at least $d^{-K_4'}$; 
note that these events are independent for different indices $t\in [r]$ since the cubes $Q_{1,j},Q_{2,j},\ldots Q_{r,j}$ are vertex-disjoint. 
Hence,  
\begin{align*}
\mathbb{P}\left(\cE_{I_j}\cap \bigcap_{t\in[r]}\cF(u,v,j,t,K')^c\right)&\le o(rn^{-4})+\left(1-d^{-K_4'}\right)^{r}\\
&\leq  o(n^{-3})+\exp(-d^{-K_4'}r)\\&\le o(n^{-3})+\exp\left(-d^{-K_4'}2^{(\log d)^2/12}\right)=o(n^{-3}),
\end{align*}
where the error term is independent of $u,v,j$ and $t$.

Therefore, by~\eqref{eq:002}, the statement of the lemma fails with probability at most
\[\begin{split}
&\sum_{(u,v)\in \cI} \sum_{j\in [s]} \mathbb P\bigg(\cE_{I_j}(u,v)\cap \bigcap_{t\in [r]} \cF(u,v,j,t,K_3')^c\bigg)\\
&\le \sum_{(u,v)\in \cI} \sum_{j\in [s]} \mathbb P\bigg( \bigcap_{t\in [r]}\cE_{I_j}(u,v)\cap \cF(u,v,j,t,K_3')^c\bigg)=o(|\cI|sn^{-3})=o(1),
\end{split}\]
as required.
\end{proof}

Before proving Theorem \ref{th: main}\ref{th: diameter}, we require one last lemma.

\begin{lemma}\label{l: distance two}
\textbf{Whp}, every vertex $v\in V(Q^d)$ is within distance two (in $Q^d$) from a vertex $u$ in $L_1$ with the following property: $u$ has at least $\exp((\log d)^4)$ vertices within distance $(\log d)^5$ in $Q^d_p$. 
\end{lemma}
\begin{proof}
Since $pd=c>1$, \textbf{whp} the second-largest component in $Q^d_p$ has order $O(d)$, see e.g.\ \cite[Theorem~13.1]{FK16}. We assume this event in the sequel.
Fix $v=\bf{0}$ and a sufficiently small $\delta>0$ satisfying $p(1-6\delta)d>1+\delta$.
Let $J=\{1,\cdots,\delta d\}$ and let $W$ be a set consisting of $\delta^2d^2/4$ vertices with support of size 2 and included in $J$. In particular, all vertices in $W$ are within distance two from $v$ in $Q^d$.
Further, for all $u,w\in W$, $Q(u,J)$ and $Q(w,J)$ are vertex-disjoint, and each has dimension $(1-\delta)d$. 

Fix $u\in W$ and consider a modified BFS process of $Q(u,J)_p$ starting from $u$ (at layer 0) and exploring one layer at a time. 
The process is defined as follows: start with $i=0$ and $Z_0=\{u\}$. For every integer $i\in [0,\delta d-1]$, 
given a set $Z_i$ of vertices reached at level $i$, define $z_i = \min\{|Z_i|,\delta d\}$ and consider the set $Z_i'$ consisting of some $z_i$ vertices in $Z_i$.
Then, explore all edges from $Z_i'$ to layer $i+1$ in $Q(u,J)_p$ and define $Z_{i+1}$ as the set of neighbours of $Z_i'$ on this next layer. 
We denote by $\cG_u$ the event that $u$ connects to at least $K_1d$ vertices on the first $(\log d)^3$ layers in this process where $K_1=K_1(\delta)$ is a constant allowing us to apply \Cref{cor: new expansion}(a) in $Q(u,J)_p$.

First, we bound from below $\mathbb P(\cG_u)$.
To this end, note that every vertex $v\in Z_i'$ with $i\le \delta d-1$ is adjacent to at least $d-|J|-i-z_i\ge (1-3\delta)d$ vertices on layer $i+1$ in $Q(u,J)$ which have no neighbour in $Z_i'\setminus \{v\}$.
Hence, as long as the BFS exploration remains below layer $\delta d$, the process stochastically dominates a Galton-Watson branching process with offspring distribution $\mathrm{Bin}((1-3\delta) d,p)$.

Classic martingale arguments for branching processes (see e.g.\ \cite[Chapter 4.3.4]{D19}) show that the BFS process reaches a layer $i\le (\log d)^2$ with $z_i = \delta d$ with probability $\eta=\eta(\delta)>0$ which does not depend on $d$; denote this event by $\cG_u'$.
Moreover, for every $i\le \delta d-1$ with $z_i\ge \delta d$, Chernoff's bound implies that
\[\mathbb P(z_{i+1}<\delta d \mid Z_i') = \mathbb P(\mathrm{Bin}((1-3\delta)d\cdot \delta d, p) < \delta d)=o(1).\]
Hence, by iterating the last observation $K_1/\delta$ times, we obtain that
\[\mathbb P(\cG_u)\ge \mathbb P(\cG_u')\cdot\mathbb P(z_i=\delta d\text{ for all }i\in [(\log d)^2, (\log d)^2+K_1/\delta]\mid \cG_u')\ge \eta-o(1).\]

Finally, conditionally on $\cG_u$, \Cref{cor: new expansion}(a) ensures that, for some $\eps_2=\eps_2(\delta)>0$, \textbf{whp} there are at least $(1+\eps_2)^{(\log d)^5-(\log d)^3}\ge \exp((\log d)^4)$ vertices at distance at most $(\log d)^5$ from $u$ in $Q(u,J)_p$ (which are in $L_1$ by assumption).
As a result, the event from the lemma fails for $v=\textbf{0}$ with probability at most 
\[\prod_{u\in W} (1-(1-o(1))\cdot\mathbb P(\cG_u)) = \prod_{u\in W} (1-\eta-o(1)) = o(n^{-1}).\]
A union bound over the $n$ vertices in $Q^d$ completes the proof.
\end{proof}

With \Cref{lem:diameter_for a pair,l: distance two} at hand, we are ready to prove Theorem \ref{th: main}\ref{th: diameter}.
\begin{proof}[Proof of Theorem \ref{th: main}\ref{th: diameter}]
As noted in the introduction, it suffices to show the upper bound.
To this end, condition on the events in \Cref{cor: new expansion} and \Cref{lem:diameter_for a pair,l: distance two}. We show that the conclusion of \Cref{th: main}\ref{th: diameter} holds deterministically under the conditioning. 

First of all, observe that, by Lemma \ref{l: distance two}, there are vertices $v_0,v_1,v_{\mathrm{mid}}\in L_1$ within distance 2 in $Q^d$ from $\textbf{0},\textbf{1}$ and from the middle layer of $Q^d$, respectively, such that each of $v_0,v_1,v_{\mathrm{mid}}$ has at least $\exp((\log d)^4)$ vertices within distance $(\log d)^5$ in $L_1$. Note that each of the vertex pairs $v_0,v_{\mathrm{mid}}$ and $v_{\mathrm{mid}},v_1$ agrees in $d/2-O(1)\ge d/3$ coordinates.
Thus, by Lemma~\ref{lem:diameter_for a pair}, there are paths of length at most $K_5d$ between $v_0$ and $v_{\mathrm{mid}}$ and between $v_{\mathrm{mid}}$ and $v_1$ in $Q^d_p$. 

Now, fix $K_1$ as in \Cref{cor: new expansion}. Note that if $|V(L_1)|<K_1d$, then $\mathrm{diam}(L_1)<K_1d$. Thus we only need to consider the case $|V(L_1)|\geq K_1d$, which we henceforward assume. Fix a vertex $v\in V(L_1)$ and denote by $\ell_v$ the smallest integer ensuring that the set $S_v$ of vertices within distance $\ell_v$ from $v$ in $L_1$ satisfies $|S_v|\ge K_1d$. Such an integer exists due the assumption  $|V(L_1)|\geq K_1d$. Observe that $K_1d\le |S_v|\le K_1d^2$ and $\ell_v\le K_1d$. 
Then, by \Cref{cor: new expansion}(a), there are at least $(1+\eps_2)^{(\log d)^5}|S_v|$ vertices in $L_1$ within distance at most $(\log d)^5$ in $L_1$ from some vertex in $S_v$, and therefore there is some vertex $u$ within distance $O(d)$ to $v$ which has at least $\exp((\log d)^4)$ vertices within distance $(\log d)^5$ in $L_1$ to it. 
Since at least one of $v_0,v_1$ is at distance less than $2d/3$ from $u$ in $Q^d$ (and hence agrees with $u$ in at least $d/3$ coordinates) \Cref{lem:diameter_for a pair} implies that there is a path between $u$ and $\{v_0,v_1\}$ in $L_1$ of length at most $K_5d$.
As a result, the vertex $v$ is at distance at most $K_1d+K_5d$ from the pair of vertices $\{v_0,v_1\}$ in $L_1$.
Since $v_0$ and $v_1$ are within distance at most $2K_5d$ in $L_1$ themselves, the diameter of $L_1$ is at most $2(K_1d+K_5d)+2K_5d=O(d)$, as desired.
\end{proof}

\subsection{Proof of Lemma \ref{lem:distance_andipodas} }\label{sec:proofs_aux_lemmas}
Throughout this section, we fix $c=1+\eps$ for small $\eps$ and $p=p(d)=c/d$. 
Note that, by monotonicity of the statement, proving \Cref{lem:distance_andipodas} for such a value of $c$ implies it for all larger values. 

For odd $k\geq 1$, we denote by $\mathcal{P}_d(k)$ the set of paths $P$ from $\textbf{0}$ to $\textbf{1}$ such that, for every $i\in [d]$, there are exactly $k$ edges $uv\in E(P)$ with $u(i)\neq v(i)$. In particular, every path in $\cP_d(k)$ has length $kd$ (and $kd+1$ vertices). 
We also define $M_d(k)$ to be the multiset containing all elements of $[d]$ and such that each element $i\in [d]$ appears exactly $k$ times in $M_d(k)$. 
We denote by $\mathcal{S}_d(k)$ the set of sequences of length $kd$ containing every element of $[d]$ exactly $k$ times, that is, the set of permutations of $M_d(k)$.
For a path $P$ in $Q^d$, we denote by $\phi(P)$ the sequence of length $e(P)$ with elements in $[d]$ where the $i$-th element in $\phi(P)$ indicates the coordinate in which the $i$-th and the $(i+1)$-st vertices of $P$ differ. 
Thus, $\phi$ defines an injective map from  $\mathcal{P}_d(k)$ to $\mathcal{S}_d(k)$.

\begin{definition}\label{def:tame}
    A path $P\in \mathcal{P}_d(k)$ is called \emph{tame} if each of the following conditions is satisfied:
    \begin{itemize}
        \item[\textbf{P1}] for every pair of vertices $u,v$ on $P$, if $u$ and $v$ differ in a single coordinate, then $u$ and $v$ are consecutive vertices on $P$, and
        \item[\textbf{P2}] for every $\ell\in [d/k^3]$ and every subsequence $\sigma$ of $\ell$ consecutive elements of $\phi(P)$, there are at least $\ell/2$ elements which appear exactly once in $\sigma$.
    \end{itemize}  
    We denote the family of tame paths by $\cP'_d(k)$.
\end{definition}
   
We prove Lemma \ref{lem:distance_andipodas} by a second moment computation involving the tame paths of length $kd$ from $\textbf{0}$ to $\textbf{1}$. 
As we will see, the property of being tame imposes additional structure which reduces the number of possible intersections, thus simplifying our arguments.

Our proof is based on the next two lemmas which are shown in Sections \ref{subsec:firstmoment} and \ref{subsec:secondmoment}, respectively.

\begin{lemma}\label{lem:firstmoment}
For every odd positive integer $k$, 
\[
     \e^{-21k} \frac{(kd)!}{(k!)^d} \leq  |\cP'_d(k)| \leq  \frac{(kd)!}{(k!)^d}.
\]
\end{lemma}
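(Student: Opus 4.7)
The upper bound is immediate: the map $\phi:\cP_d(k)\to\cS_d(k)$ is injective (distinct oriented paths from $\textbf{0}$ to $\textbf{1}$ in $Q^d$ produce distinct coordinate-flip sequences), and since $\cP'_d(k)\subseteq\cP_d(k)$ with $|\cS_d(k)|=(kd)!/(k!)^d$, the bound follows.

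For the lower bound I will estimate, for uniform $\sigma\in\cS_d(k)$, the probability that the walk $W_\sigma$ on $V(Q^d)$ starting at $\textbf{0}$ and flipping coordinate $\sigma(i)$ at step $i$ is a tame path. Let $v_0=\textbf{0}, v_1, \ldots, v_{kd}=\textbf{1}$ be the vertex sequence of $W_\sigma$. The key observation is the identity: $d_{Q^d}(v_i,v_j)$ equals the number of coordinates appearing an odd number of times in the window $\sigma(i+1),\ldots,\sigma(j)$. Consequently, if P2 holds and $\ell:=j-i\in[2,d/k^3]$, then the window has at least $\lceil\ell/2\rceil$ coordinates appearing exactly once, so $d_{Q^d}(v_i,v_j)\ge\lceil\ell/2\rceil$; a one-line case check on the partition type of the window for $\ell\in\{2,3\}$ shows P2 actually forces $d_{Q^d}(v_i,v_j)\ge 2$ in those ranges too. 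Thus P2 alone precludes both repeats and P1 violations at all distances $\le d/k^3$, and it suffices to lower bound $\Pr(W_\sigma\in\cP'_d(k))$ by controlling two events: (A) "$\sigma$ fails P2"; (B) "there exist $i<j$ with $j-i>d/k^3$ and $d_{Q^d}(v_i,v_j)\le 1$".

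For (A), P2-failure in a fixed window of length $\ell$ forces at least $\lceil\ell/4\rceil$ unordered matching pairs inside the window, a count with expectation $O(\ell^2/d)$ under the without-replacement distribution on $\cS_d(k)$. For $\ell\ge 4$ the "number of distinct repeated values in the window" functional has switching-Lipschitz constant $O(1)$, so Lemma \ref{lem:switchings} yields a per-window failure probability that is super-polynomially small in $d$, easily absorbing the $O(d^2/k^3)$ union bound over windows. The short-window regime $\ell\in\{2,3\}$ is handled separately: the expected number of P2-violating short windows is $O(k)$ (dominated by adjacent-coordinate collisions $\sigma(i)=\sigma(i+1)$), and a second-moment estimate gives $\Pr(\text{P2 holds})\ge\mathrm{e}^{-O(k)}$. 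For (B), I use the multinomial identity
\begin{equation*}
\sum_{\substack{c_1,\ldots,c_d\ge 0\text{ even}\\ \sum c_i=\ell}}\frac{\ell!}{c_1!\cdots c_d!}=\frac{1}{2^d}\sum_{\epsilon\in\{\pm 1\}^d}(\epsilon_1+\cdots+\epsilon_d)^\ell,
\end{equation*}
together with its analogue for unit-weight signatures, to bound the number of sequences in $\cS_d(k)$ for which a specific window of length $\ell$ has signature of Hamming weight $\le 1$. For $\ell\in(d/k^3, Cd)$ this gives per-pair probability $O(d^{-\ell/2+O(1)})$; for larger $\ell\le kd-d/k^3$ a walk-mixing argument gives per-pair probability $O(d/2^d)$; and the range $\ell>kd-d/k^3$ is handled for free via the identity $v_j-v_i=\textbf{1}+(\text{signature of the complement window})$, which forces the weight to be at least $d-1-d/k^3\ge 2$. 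Summing over the $O(d)$ candidate signatures and $O((kd)^2)$ pairs yields $\Pr(\text{(B)})\le\mathrm{e}^{-\Omega(d/k^3)}$, and so $\Pr(W_\sigma\in\cP'_d(k))\ge\mathrm{e}^{-O(k)}-\mathrm{e}^{-\Omega(d/k^3)}\ge k^{-40}$ for $d$ sufficiently large in terms of $k$.

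The main obstacle is step (A) for the short windows $\ell\in\{2,3\}$: the adjacent-collision indicators $\mathds{1}[\sigma(i)=\sigma(i+1)]$ are highly correlated and Lemma \ref{lem:switchings} with Lipschitz constant $O(1)$ is not sharp enough on its own. Extracting the clean $\mathrm{e}^{-O(k)}$ lower bound will require a delicate Poisson-convergence or exchangeable-coupling argument on the number of adjacent collisions, with explicit constants tracked to land at the clean bound $k^{-40}$ (which is tight at $k=1$, where $\cS_d(1)$ is in bijection with the symmetric group on $[d]$ and every element trivially produces a tame geodesic path from $\textbf{0}$ to $\textbf{1}$).
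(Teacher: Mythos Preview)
Your overall structure---reduce to P2 plus a ``no long-range near-collisions'' event (B), and observe that P2 on short windows already forces the walk to be a simple path satisfying P1 at short scales---matches the paper's. The upper bound and the window-parity identity are fine, and your treatment of (B) is a reasonable substitute for the paper's Case~4 (the paper just applies \Cref{lem:switchings} to the functional ``number of elements appearing exactly once in the window'', which is cleaner than the multinomial identity you propose, but either works once $\ell\gtrsim d/k^3$).

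The genuine gap is your claim that \Cref{lem:switchings} handles P2 for all $\ell\ge 4$. The switching bound is $\exp\bigl(-t^2/(2c^2kd)\bigr)$; for a window of length $\ell$ the deviation you need is $t=\Theta(\ell)$, so the per-window bound is $\exp\bigl(-\Theta(\ell^2/kd)\bigr)$. For constant $\ell$ this is $1-O(1/d)$, not super-polynomially small, and after a union bound over $\Theta(kd)$ windows you get nothing. The switching lemma only becomes effective once $\ell\gg\sqrt{d\log d}$ or so. Your proposal therefore leaves the entire range $\ell\in[4,\mathrm{polylog}(d)]$ uncontrolled, not just $\ell\in\{2,3\}$ as you identify at the end.

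The paper handles this missing range by abandoning concentration entirely in favour of direct arguments, split into three regimes. For $\ell\in[11,(\log d)^{10}]$ they use a first-moment union bound over sets $S$ of $\lfloor\ell/4\rfloor$ repeated values; for $\ell\in[(\log d)^{10},d/k^3]$ they reveal window entries sequentially and dominate the number of matches by $\mathrm{Bin}(\ell,2/k^3)$. Most importantly, for $\ell\in[10]$ (which is where the constant $k^{-40}$ actually comes from) they use a construction you are missing: build $\sigma$ in $k$ stages, inserting one copy of each of $1,\ldots,d$ per stage at a uniformly random position, and lower-bound the probability that every insertion lands at distance at least $11$ from any earlier copy of the same symbol. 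This gives
\[
\prod_{i=2}^{k}\Bigl(1-\tfrac{20}{(i-1)d}\Bigr)^{d}=(1+o(1))\exp\Bigl(-20\sum_{i=1}^{k-1}\tfrac{1}{i}\Bigr)\ge 2k^{-40},
\]
and this event forces $\ell$-tameness for all $\ell\le 10$ simultaneously. This insertion trick is the key idea you are missing; the Poisson/exchangeable-coupling route you suggest for $\ell\in\{2,3\}$ would, even if made to work, still leave the range $\ell\in[4,\sqrt{d}]$ unaddressed.
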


\begin{lemma}\label{lem:secondmoment}
There exist $\eta=\eta(\eps) \in (0,\eps/2)$ and an odd integer $k=k(\eps)$ such that the following holds. For every $r\in [0,kd]$ and $P\in \cP'_d(k)$, the number of paths in $\cP'_d(k)$ intersecting $P$ in exactly $r$ edges is bounded from above by $(\tfrac{1+\eta}{d})^r d^{k^5} (kd)!/(k!)^d$.
\end{lemma}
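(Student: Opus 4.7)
My plan is to decompose $E(P) \cap E(Q)$ into maximal common subpaths $\pi_1, \ldots, \pi_s$ of $P$ of lengths $r_1, \ldots, r_s$ summing to $r$, and to encode each $Q \in \cP'_d(k)$ sharing $r$ edges with $P$ by (i) the start positions and lengths of the $\pi_j$ inside $\phi(P)$, (ii) the permutation of $[s]$ and orientations $\pm 1$ by which they appear in $\phi(Q)$, and (iii) the private entries of $\phi(Q)$ at the $kd - r$ positions not covered by any block. For fixed (i)-(ii), the number of valid completions in (iii) is at most $(kd - r)!/\prod_i (k - a_i)!$, where $a_i$ counts the appearances of coordinate $i\in[d]$ in $\pi_1 \cup \cdots \cup \pi_s$.

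The heart of the estimate is the ratio of this completion count to $|\cS_d(k)| = (kd)!/(k!)^d$, which is exactly the hypergeometric probability that a uniformly random $\sigma \in \cS_d(k)$ has the prescribed symbols at the $r$ block positions:
\[
\frac{(kd-r)!/\prod_i(k-a_i)!}{(kd)!/(k!)^d}
\;=\; \prod_i \frac{k!/(k-a_i)!}{\,(kd)!/(kd-r)!\,}
\;\le\; \prod_{j=0}^{r-1}\frac{k}{kd-j}
\;\le\; \left(\frac{1}{d-r/k}\right)^r.
\]
For $r \le (1-\eta') kd$ with $\eta' = \eta'(\eta) > 0$ small, this is at most $((1+\eta')/d)^r$. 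Combined with the combinatorial overhead from (i)-(ii), bounded by $\binom{kd-r+s}{s}^2 \binom{r-1}{s-1}\, s!\, 2^s \le (3kd)^{2s}$, one obtains a preliminary bound of $(3kd)^{2s}\bigl((1+\eta')/d\bigr)^r |\cS_d(k)|$ for each choice of $s$ and composition $(r_j)$.

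The main obstacle is controlling the factor $(3kd)^{2s}$ when $s$ is large, i.e.\ when the shared edges fragment into many short pieces. When $s \le k^4$ one has $(3kd)^{2s} \le d^{k^5}$ and the bound follows directly. For $s > k^4$, my plan is to invoke the tameness of $Q$: property~(P2), applied to windows of $\phi(Q)$ of length at most $d/k^3$, forbids repeated symbols from dominating, and many short blocks packed together would violate the singleton-density guarantee. This constraint forces the $a_i$ to remain spread across many distinct coordinates; the resulting improvement in $\prod_i k!/(k-a_i)!$ (which is maximized when each $a_i \le 1$) should supply a matching saving against $(3kd)^{2s}$, to be absorbed into $(1+\eta)^r$. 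The boundary regime $r > (1-\eta')kd$, where the hypergeometric bound degenerates, is handled separately via rigidity: near-complete overlap forces $Q$ to be essentially $P$, leaving only $O(\eta' kd)$ free edges, and the slack $(1+\eta)^r/d^r \cdot d^{k^5}$ in the target easily accommodates the remaining count. Summing over $s$, the compositions $(r_j)$, and configurations then yields the lemma; the hardest piece will be making the tameness-based bookkeeping for large $s$ quantitatively precise enough to preserve the per-edge cost $(1+\eta)/d$.
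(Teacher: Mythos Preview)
Your encoding is structurally sound, but the central numerical estimate fails. The inequality $\bigl(\tfrac{1}{d-r/k}\bigr)^r \le \bigl(\tfrac{1+\eta'}{d}\bigr)^r$ is equivalent to $r\le \eta' kd/(1+\eta')$, so it holds only for $r=O(\eta kd)$, not for all $r\le (1-\eta')kd$ as you claim; already at $r=kd/2$ one has $d-r/k=d/2$ and your bound reads $(2/d)^r$, a factor $2^r$ too large. Since your rigidity argument addresses only $r>(1-\eta')kd$, the whole intermediate range is uncovered. The loss comes from the crude step $\prod_i k!/(k-a_i)!\le k^r$, which is tight only when every $a_i\le 1$ (hence $r\le d$); for larger $r$ some $a_i$ exceed $1$ and the smaller factors $k-1,k-2,\ldots$ in the falling factorials return exactly the saving you discard. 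The paper retains it by tracking the multiplicity $\mult(ad+b):=(a!)^d(a+1)^b$: the completion count is recorded as $m!/\mult(m)$, and one proves that the reciprocal falling factorial $\prod_{j<z}(kd-2\ell-j)^{-1}$ is at most $d^{-z}(1+O(\eta^2))^z\cdot\mult(kd-z)/\mult(kd)$, after which $\mult(kd-z)$ cancels against the completion count. This estimate (together with a separate treatment of $z$ near $kd$) is the real core of the proof and is absent from your plan.

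Your handling of $s>k^4$ is also only a hope; the paper's mechanism is explicit and quite different from arguing via the $a_i$. It splits the gaps $R_i\subseteq E(Q)\setminus E(P)$ into the $k^4$ longest (whose symbol multiset is then determined by what remains, and counted via $\mult$ as above) and the short ones of length at most $d/k^3$, on which property \textbf{P2} forces at least half of $\phi(R_i)$ to consist of coordinates appearing exactly once. Those singleton coordinates are precisely the set on which the endpoints $R_i^-,R_i^+$ differ, and since both endpoints lie on $P$ and are fixed once the shared blocks are chosen, this set comes for free; only its internal ordering (costing $x_i!$ rather than $d^{x_i}$) and the at-most-half repeated part need encoding. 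This endpoint-determines-odd-part saving, together with property \textbf{P1} (which rules out $x_i=1$ and forces each gap to have length at least $2$), is what absorbs the $(kd)^{O(s)}$ overhead; neither ingredient appears in your outline.
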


Next, we use \Cref{lem:firstmoment,lem:secondmoment} to prove Lemma \ref{lem:distance_andipodas}.

\begin{proof}[Proof of Lemma \ref{lem:distance_andipodas}]
Let  $\eta=\eta(\eps) \in (0,\eps/2)$ and $k$ be as in \Cref{lem:secondmoment}. Denote by $X$ the number of paths in $\cP'_d(k)$ contained in $Q^d_p$. Note that $\mathbb{E}[X]=|\cP'_d(k)|p^{kd}$. Furthermore, we have that
\begin{align*}
\mathbb{E}[X^2]
&= \sum_{P,P'\in \cP'_d(k)} \mathbb P(E(P)\cup E(P')\subseteq E(Q^d_p)) = \sum_{P\in \cP'_d(k)} \sum_{r=0}^{kd} \sum_{\substack{P' \in \cP'_d(k) :\\ |E(P)\cap E(P')|=r}} \mathbb P(E(P)\cup E(P')\subseteq E(Q^d_p)) \nonumber\\
&\le\sum_{P\in \cP'_d(k)} \sum_{r=0}^{kd} \bigg(\left(\frac{1+\eta}{d}\right)^r d^{k^5} \frac{(kd)!}{(k!)^d}\bigg) p^{2kd-r} = |\cP'_d(k)| p^{2kd} \sum_{r=0}^{kd} \bigg(\left(\frac{1+\eta}{1+\eps}\right)^r d^{k^5} \frac{(kd)!}{(k!)^d}\bigg)\\
&=\frac{(\mathbb{E}[X])^2}{|\cP'_d(k)|}\sum_{r=0}^{kd} \bigg(\left(\frac{1+\eta}{1+\eps}\right)^r d^{k^5} \frac{(kd)!}{(k!)^d}\bigg)\le \frac{(\mathbb{E}[X])^2}{|\cP'_d(k)|}d^{k^5}\frac{(kd)!}{(k!)^d}\cdot \frac{1+\eps}{\eps-\eta} \leq \frac{(\mathbb{E}[X])^2}{|\cP'_d(k)|}d^{k^5}\frac{(kd)!}{(k!)^d}\cdot \frac{4}{\eps},
\end{align*}
where the first inequality follows from \Cref{lem:secondmoment}, the second inequality uses that $\eta<\eps$ and the last one uses that $\eta<\eps/2\leq 1$.

Now, combining the Cauchy-Schwarz inequality with the lower bound in \Cref{lem:firstmoment} implies that
\begin{align*}
    \mathbb{P}(X\ge 1)\ge \frac{(\mathbb{E}[X])^2}{\mathbb{E}[X^2]}\ge |\cP'_d(k)|\frac{\eps}{4}d^{-k^5}\frac{(k!)^d}{(kd)!}\ge \frac{\eps}{4} d^{-k^5}\e^{-21k}\ge d^{-k^5-1}.
\end{align*}
As $k=k(\eps)$ depends only on $\eps$, this concludes the proof.
\end{proof}

\subsubsection{Proof of Lemma \ref{lem:firstmoment}}\label{subsec:firstmoment} 
First of all, note that $|\mathcal{P}_d(k)|\leq |\mathcal{S}_d(k)|\le (kd)!/(k!)^d$. Thus, we focus on the lower bound. The result holds trivially for $k=1$, so we may assume that $k\ge 3$.

For every $\ell\in[d/k^3]$, we say that a sequence $\sigma\in \mathcal{S}_d(k)$ is \emph{$\ell$-tame} if every subsequence of $\sigma$ of exactly $\ell$ consecutive elements has at least $\ell/2$ elements that appear exactly once in $\sigma$. 
Moreover, for every $\ell\in\{d/k^3+1,\ldots,(k-1)d+1\}$, we say that a sequence $\sigma \in \mathcal{S}_d(k)$ is \emph{$\ell$-tame} if every subsequence of $\sigma$ of $\ell$ consecutive elements contains at least two elements which appear an odd number of times (note that every subsequence of $\sigma$ of length  $\ell\ge (k-1)d+2$ contains at least $\ell-(k-1)d\geq 2$ elements appearing $k$ times and $k$ is odd).
Observe that the image of $\phi$ contains the subfamily $\cS'_d(k)\subseteq \mathcal{S}_d(k)$ of sequences which are $\ell$-tame for every $\ell \in [(k-1)d+1]$.

In the sequel, we consider a uniformly random sequence $\sigma\in \cS_d(k)$ and show that it belongs to $\cS'_d(k)$ with probability at least $\e^{-21k}$.
We analyse the events that $\sigma$ is $\ell$-tame for different values of $\ell$ separately.

\vspace{0.5em}
\noindent
\textbf{Case 1: $\ell\in [10]$.} 
We construct $\sigma$ in $kd$ steps divided into $k$ stages. During every stage, we consecutively insert the symbols $1,\ldots,d$ uniformly at random in the currently constructed sequence. 
Then, the probability that every symbol is inserted at distance at least 11 from all symbols of the same type during each of the $k$ stages (guaranteeing that $\sigma$ is $\ell$-tame for every $\ell\in [10]$) is at least
\[\prod_{i=2}^k \bigg(1-\frac{20(i-1)}{d(i-1)}\bigg)^d = (1+o(1)) \e^{-20k}\ge 2\e^{-21k}.\]

\vspace{0.5em}
\noindent 
\textbf{Case 2: $\ell\in [11,(\log d)^{10}]$.} If $\sigma$ is not $\ell$-tame for some $\ell$ in the fixed interval, then there exists a set $S\subseteq [d]$ of size $\lfloor \ell/4\rfloor$ and a subsequence $\sigma'$ of $\sigma$ of $\ell$ consecutive elements such that $\sigma'$ contains at least $2\lfloor \ell/4\rfloor$ copies of elements of $S$. 
This occurs for some $\ell\in [11,(\log d)^{10}]$ with probability at most
\begin{align*}
\sum_{\ell=11}^{(\log d)^{10}} kd \binom{d}{\lfloor \ell/4\rfloor} \binom{\ell}{2\lfloor \ell/4\rfloor} \left(\frac{k \lfloor \ell/4\rfloor}{kd-\ell} \right)^{2\lfloor \ell/4\rfloor}
& \leq \sum_{\ell=11}^{(\log d)^{10}} kd \cdot d^{\lfloor \ell/4\rfloor}\cdot 2^\ell \left(\frac{k \lfloor \ell/4\rfloor}{kd-\ell} \right)^{2\lfloor \ell/4\rfloor}
\\&\leq \sum_{\ell=11}^{(\log d)^{10}} 8kd  \left(\frac{(16k \lfloor \ell/4\rfloor)^2}{kd-\ell} \right)^{\lfloor \ell/4\rfloor}=o(1),
\end{align*}
where the expression in the first sum used that there are less than $kd$ ways to fix an interval of $\ell$ consecutive positions, $\binom{d}{\lfloor \ell/4\rfloor}$ ways to choose the symbols in $S$ and $\binom{\ell}{2\lfloor \ell/4\rfloor}$ ways to choose the exact positions of some of these symbols in the said interval.

\vspace{0.5em}
\noindent \textbf{Case 3: $\ell\in [(\log d)^{10}+1,d/k^3]$.} Again, given $\ell$, there are less than $kd$ ways to choose an interval $\sigma'$ of $\ell$ consecutive positions (seen as a subsequence of $\sigma$).
Once $\sigma'$ is fixed, we reveal its elements one by one. 
In this procedure, the probability that the $i$-th revealed element matches an element that has already been revealed is at most $k\ell/(kd-\ell)\leq 2/k^3$.
Thus, the probability that we observe more than $\ell/4$ elements that match an earlier element in $\sigma'$
(and hence fewer than $\ell/2$ elements that appear exactly once in $\sigma'$) is at most
\[
\mathbb P\left(\mathrm{Bin}\left(\ell,\frac{2}{k^3}\right)\geq \frac{\ell}{4}\right)
\leq \binom{\ell}{\ell/4} \left(\frac{2}{k^3}\right)^{\ell/4}
\leq \left(\frac{\e\ell}{\ell/4}\cdot \frac{2}{k^3}  \right)^{\ell/4}=o(d^{-2}).\]
A union bound over $O(d)$ values for $\ell$ and $O(d)$ choices for the position of $\sigma'$ shows that the probability that the sequence $\sigma$ is not $\ell$-tame for some $\ell\in [(\log d)^{10}+1,d/k^3]$ is $o(1)$.

\vspace{0.5em}
\noindent \textbf{Case 4: $\ell \in [d/k^3,(k-1)d+1]$.} 
Fix $\ell$ as required and an interval $\sigma'$ as above (again seen as a subsequence of $\sigma$ on $\ell$ elements).
Then, the expected number of elements appearing exactly once (so an odd number of times) in $\sigma'$ is $\Omega_k(d)$.
Moreover, by exchanging a pair of elements in $\sigma$, the number of elements which are met exactly once in $\sigma'$ can change by at most 2.
By the switching \Cref{lem:switchings}, $\sigma'$ contains at most $2$ elements appearing exactly once with probability $o(d^{-2})$, and the union bound over $O(d)$ choices for $\ell$ and $O(d)$ choices for $\sigma'$ shows that $\sigma$ is not $\ell$-tame for some $\ell\in [d/k^3,(k-1)d+1]$ with probability $o(1)$.

\vspace{0.5em}
\noindent 
Altogether, $\sigma$ is tame with probabiltity at least $2\e^{-21k}-o(1)\ge \e^{-21k}$, as required.\qed

\subsubsection{Proof of Lemma \ref{lem:secondmoment}}\label{subsec:secondmoment}
Fix $r\in [kd]$, a path $P \in \cP'_d(k)$ and define $\cY_r(P)$ to be the set of paths in $\cP'_d(k)$ intersecting $P$ in exactly $r$ edges. 
For $P'\in \cY_r(P)$, we denote by $\mathcal{I}(P,P')$ the set of maximal paths (with respect to inclusion) induced by $E(P)\cap E(P')$; 
in particular, $|\mathcal{I}(P,P')|\leq r$. 
For $\ell \in [0,r+1]$, we denote by $\cY_{r,\ell}(P)$ the set of paths $P'\in \cY_r(P)$ such that $E(P')\setminus E(P)$ spans $\ell$ maximal paths (with respect to inclusion). We prove that, for every $\ell\in [r+1]$,
\begin{equation}\label{main:eq}
    |\cY_{r,\ell}(P)|\leq \left(\frac{1+\eps}{d}\right)^r d^{k^5-2} \frac{(kd)!}{(k!)^d},
\end{equation}
which is enough to conclude.
In the rest, we fix $\ell\in [r+1]$ and, for ease of notation, write $\cY$ for $\cY_{r,\ell}(P)$.

Fix $P'\in \cY$. First, we describe an encoding of $P'$ using $P$, which we then use to count the number of paths in $\cY$. 
Denote by $\textbf{0}=v_1,...,v_{\ell'}=\textbf{1}$ be the sequence of vertices that either are incident to exactly one edge in $E(P')\setminus E(P)$ (or, equivalently, in $E(P)\setminus E(P')$) or belong to $\{\textbf{0},\textbf{1}\}$, in the order in which we meet them as we traverse $P'$ starting from $\textbf{0}$. 
For every $i\in [\ell'-1]$, we write $W_i$ for the oriented subpath of $P'$ from $v_i$ to $v_{i+1}$. 
Thus, $\cI(P,P')$ is the set of paths with either odd or even indices among $W_1,W_2,...,W_{\ell'-1}$. Note also that $W_1$ starts at $\textbf{0}$ and $W_{\ell'-1}$ ends at $\textbf{1}$.
We denote by $\ell_{odd}$ and $\ell_{even}$ the largest odd and even integers, respectively, which are smaller than $\ell'$. 
Recalling the map $\phi$ defined before \Cref{def:tame}, we will encode $P'$ by using the sequence $\phi(W_1),\phi(W_2),...,\phi(W_{\ell'-1})$ as follows. 

\vspace{0.5em}
\noindent 
\textbf{Step 1:} Given $P$, we start by specifying $\cI(P,P')$. To this end, we choose $2\ell$ vertices on $P$; this is done in $\tbinom{kd}{2\ell}$ ways and produces the $\ell$ paths in $E(P)\setminus E(P')$, meanwhile determining $\cI(P,P')$ as well.
Then, we specify the order and the orientation in which the paths in $\mathcal{I}(P,P')$ appear on $P'$; this is done in at most $|\mathcal{I}(P,P')|! 2^{|\mathcal{I}(P,P')|}$ ways. 
Note that, if the first chosen vertex on $P$ is not $\textbf{0}$, then this determines $\{W_1,W_3,\ldots,W_{\ell_{odd}}\}=\mathcal{I}(P,P')$ and, in extension, $\phi(W_1),\ldots,\phi(W_{\ell_{odd}})$. 
If it is $\textbf{0}$, our choice determines $\{W_2,W_4,\ldots,W_{\ell_{even}}\}=\mathcal{I}(P,P')$ and, in extension, $\phi(W_2),\ldots,\phi(W_{\ell_{even}})$. 
Since $|\cI(P,P')|\leq \ell+1$, there are at most 
\begin{equation}\label{eq:step1}
    \binom{kd}{2\ell}(\ell+1)!2^{\ell+1}
\end{equation}
choices for the maximal paths in $E(P)\cap E(P')$ and for their order and orientation on $P'$. 

\vspace{3mm}

Before moving to the second step, we collect some notation.
First, let $R_1,\ldots,R_\ell$ be the subsequence of $W_1,\ldots,W_{\ell'-1}$ of paths outside $\cI(P,P')$. 
For $i\in [\ell]$, we denote by $R_i^-$ and $R_i^+$ the starting and the ending point of $R_i$. Note that once the paths in $\cI(P,P')$ are specified, ordered and oriented, the endpoints $R_i^-$ and $R_i^+$ are uniquely determined for $i\in [\ell]$. Given this set of endpoints and the set $\cI(P,P')$, it remains to encode the sequence $\phi(R_1),\ldots,\phi(R_{\ell})$.

Further, for every $i\in [\ell]$, we partition the elements of the sequence $\phi(R_i)$ into two multisets: $S_i^{\mathrm{odd}}$ contains a single copy of each element which appears an odd number of times in $\phi(R_i)$ and $S_i^{\mathrm{rep}}:=\phi(R_i)\setminus S_i^{\mathrm{odd}}$ (where $\phi(R_i)$ is seen as a multiset here by a slight abuse of notation).
Note that, given the vertices $R_i^-$ and $R_i^+$, $S_i^{\mathrm{odd}}$ is the set of coordinates where $R_i^-$ and $R_i^+$ differ. Let $x_i:=|S_i^{\mathrm{odd}}|$, $y_i:=|S_i^{\mathrm{rep}}|$.

Order the indices $i\in [\ell]$ so that $x_i+y_i$ is decreasing (we break ties by selecting the paths with smaller indices) and let $B^+$ be the set of the first $\min\{k^4,\ell\}$ indices $i$. We let $B^-:=[\ell]\setminus B^+$. Thus, $B^-$ indexes short paths, if any exist. Let 
\[x_s=\sum_{i\in B^-}x_i, \hspace{5mm} x_b=\sum_{i\in B^+}x_i,\hspace{5mm} y_s=\sum_{i\in B^-}y_i\hspace{5mm} \text{ and }\hspace{5mm}y_b=\sum_{i\in B^+}y_i.\]

\vspace{0.5em}
\noindent 
\textbf{Step 2:}  As $B^+$ contains the $\min\{k^4,\ell\}$ longest paths, recalling that $|E(P)\cap E(P')|=r$,~we~have 
\begin{align}\label{eq:smallsizes}
kd-r-x_s-y_s= x_b+y_b\geq k^4(x_i+y_i)\text{ for every }i\in B^-.
\end{align}
Note that the inequality in~\eqref{eq:smallsizes} is a non-trivial statement only in the case when $|B^+|=\ell\ge k^4$ as otherwise $B^-$ is empty. Moreover, the number of possibilities for the set $B^+$ is at most
\begin{equation}\label{eq:step2}
\max\bigg\{1,\binom{\ell}{k^4}\bigg\}\leq \ell^{k^4}\leq d^{k^4+1}.    
\end{equation}

In addition, since $P'$ is a tame path and every subpath with index in $B^-$ has length at most $d/k^3$, we have that $x_i\ge (x_i+y_i)/2$ or equivalently $x_i\ge y_i$ for every $i\in B^-$. In particular, $x_s\ge y_s$. 
We also note that $x_i\geq 2$ for every $i\in [\ell]$. 
Indeed, assuming for contradiction that $x_i=1$ for some $i$ would imply that the endpoints of $R_i$ are adjacent in $Q^d$ and, in extension, by \textbf{P1} and the fact that both $P,P'$ are tame, that the edge $R^-_iR^+_i$ belongs to both $P,P'$. In this case, $R_i=R^-_iR^+_i\in E(P)$, which contradicts the fact that $R_i$ is spanned by edges in $E(P')\setminus E(P)$. 
Summarising, given the endpoints $R_i^-,R_i^+$ for all $i\in [\ell]$, the sets $S_i^{\mathrm{odd}}$ are determined and have sizes $x_i\geq 2$ implying that $x_s+x_b\ge 2\ell$ and $y_s\leq \min\{x_s, kd-r-2\ell\}$.

\vspace{0.5em}
\noindent 
\textbf{Step 3:} Given the endpoints $R_i^-,R_i^+$ for all $i\in B^-$ (determining the sets $S_i^{\mathrm{odd}}$ with sizes $x_i$), we encode the sequence $\{\phi(R_i)\}_{i\in B^-}$ as follows. Let $y^*:= \min\{x_s, kd-r-2\ell\}$.
First, we specify even $y_s \le 2\lfloor y^*/2\rfloor$ and then the values of $y_i$ for $i\in B^-$. 
Given $y_i$ and $x_i$, we specify the elements in $S_i^{\mathrm{rep}}$ with multiplicities and their positions in the sequence $\phi(R_i)$; this is done in at most $\binom{x_i+y_i}{y_i} d^{y_i}$ ways by first specifying the $y_i$ positions in $\phi(R_i)$ and then the element that appears in each of these positions. Finally, we specify the order in which the elements in $S_i^{\mathrm{odd}}$ appear in $\phi(R_i)$. 
By denoting for convenience by $\mathrm{Ev}(y^*)$ the set of even integers in $[0,y^*]$ and by $\mathrm{EvPar}(t)$ the set of ordered integer partitions of an even integer $t$ into $|B^-|$ non-negative even integers $(y_i)$ with $y_i\le x_i$ for every $i\in B^-$, the number of choices the sequence $\{\phi(R_i)\}_{i\in B^-}$ is bounded from above by
\begin{align*}
\sum_{y_s\in \mathrm{Ev}(y^*)} \sum_{(y_i)\in \mathrm{EvPar}(y_s)} \prod_{i\in B^-} \binom{x_i+y_i}{y_i} d^{y_i}  (x_i)! 
&\leq  \sum_{y_s\in \mathrm{Ev}(y^*)} \sum_{(y_i)\in \mathrm{EvPar}(y_s)} \prod_{i\in B^-} 2^{x_i+y_i} d^{y_i} (x_i)!\\
&\leq  \sum_{y_s\in \mathrm{Ev}(y^*)} \binom{y_s/2+|B^-|}{|B^-|} 2^{x_s+ y_s} d^{y_s}  \prod_{i\in B^-}  (x_i)!,  
\end{align*}
where in the second inequality we used an upper bound on $|\mathrm{EvPar}(y_s)|$ coming from the number of integer partitions of $y_s/2$ into $|B^-|$ parts. 
Note that the bound above takes into account some partitions in $\mathrm{EvPar}(y_s)$ potentially generating paths which are among the $k^4$ longest ones outside $\cI(P,P')$. However, since we are only interested in an upper bound, working with the restrictions on $x_i,y_i$ with $i\in [\ell]$ and $x_s,x_b,y_s,y_b$ established in the previous step is enough for our purposes.
Further, using that $y_s/2+|B^-|\le y_s/2+x_s/2\le x_s$, the last sum is further bounded from above by
\begin{align}\label{eq:step3}
\sum_{y_s\in \mathrm{Ev}(y^*)} 2^{x_s} 2^{x_s+ y_s} d^{y_s} \prod_{i\in B^-}(x_i)!\leq \sum_{y_s\in \mathrm{Ev}(y^*)} 2^{3x_s} d^{y_s} \prod_{i\in B^-} (x_i)!. 
\end{align}

\vspace{0.5em}
\noindent 
\textbf{Step 4:} It remains to deal with the long paths outside $\cI(P,P')$.
For every $j\in [d]$ and $t\in [k]$, if $j$ appears exactly $t$ times in the sequences $\{\phi(Z)\}_{Z\in \cI(P,P')}$ and $\{\phi(R_i)\}_{i\in B^-}$, then it appears exactly $k-t$ times in the sequences $\{\phi(R_i)\}_{i\in B^+}$. 
Thus, given the sequences $\{\phi(Z)\}_{Z\in \cI(P,P')}$ and $\{\phi(R_i)\}_{i\in B^-}$, the elements in $\{\phi(R_i)\}_{i\in B^+}$ are determined: observe that their number is $kd-r-x_s-y_s$. 
Thus, we can determine the sequences $\{\phi(R_i)\}_{i\in B^+}$ by specifying a sequence on $kd-r-x_s-y_s$ underlying elements and then partitioning it into $|B^+|\leq k^4$ consecutive subsequences. 
Again, since we are only aiming for an upper bound, the fact that each of the considered paths belongs to $B^+$ is not taken into account, and only the previously derived restrictions on $x_s,x_b,y_s,y_b$ are used.

For integers $a\in [0,k]$ and $b\in [0,d-1]$, we set $\mult(ad+b):=(a!)^{d}(a+1)^b$: note that $\mult(t)$ is the minimal number of permutations of the elements of a sequence on $[d]$ which leaves this sequence unchanged. 
For example, $\mult(d+1)=2$ since every sequence $\sigma$ on $d+1$ elements has two repeating elements and exchanging their positions would not change $\sigma$.
By the above reasoning, the number of ways to determine the paths indexed by $B^+$ is bounded from above by
\begin{equation}\label{eq:step4}
\frac{(kd-r-x_s-y_s)!}{\mult(kd-r-x_s-y_s)}\binom{kd}{k^4}\leq \frac{d^{k^4+1}(kd-r-x_s-y_s)!}{\mult(kd-r-x_s-y_s)}.
\end{equation}
As the procedure described in Steps 1--4 for encoding the path $P'$ is indeed injective, we have that $|\cY|$ is bounded from above by the product of the last expressions in \eqref{eq:step1}, \eqref{eq:step2}, \eqref{eq:step3} and \eqref{eq:step4}. 

Next, we bound from above the product of \eqref{eq:step3} and \eqref{eq:step4}.
To this end, we note that, for all integers $a,b\ge 1$ with $k^4a\leq b$, we have that $a!b!\leq (a-1)!(b+1)!/k^4$.
Combining this observation with the fact that $x_i\ge 2$ for every $i\in [\ell]$ and that $kd-r-x_s-y_s\ge x_b+y_b\ge k^4 x_i$ for every $i\in B^-$ when $B^-\neq \varnothing$, we obtain that
\begin{equation}\label{eq:factorials}
\bigg(\prod_{i\in B^-} x_i!\bigg)\cdot (kd-r-x_s-y_s)!\le 2^{|B^-|}\cdot \frac{(kd-r-2|B^-|-y_s)!}{(k^4)^{x_s-2|B^-|}}.
\end{equation}
Note also that, for every $i\in [kd]$, $\mult(i)/k\leq \mult(i-1)$. Recalling that $y^*=  \min\{x_s, kd-r-2\ell\} $ (so, in particular, $y^*\le x_s$), $|B^-|=\max\{0,\ell-k^4\}$ (so, in particular, $2\ell-2|B^-|\leq 2k^4$) and using \eqref{eq:smallsizes} and \eqref{eq:factorials}, the product of \eqref{eq:step3} and \eqref{eq:step4} is bounded from above~by 
\begin{align}\label{eq:smallpieces4}
  &\sum_{y_s=0}^{y^*} 2^{3x_s} d^{y_s} 2^{|B^-|}\cdot \frac{d^{k^4+1}(kd-r-2|B^-|-y_s)!/(k^4)^{x_s-2|B^-|}}{\mult(kd-r-y_s)/k^{x_s}} \nonumber\\
    & \leq \sum_{y_s=0}^{y^*} d^{y_s} \bigg(\frac{2}{k}\bigg)^{3x_s} d^{k^4+1}(2k^8)^{|B^-|} \frac{(kd-r-2|B^-|-y_s)!}{\mult(kd-r-y_s)} \nonumber\\
    &\leq \sum_{y_s=0}^{kd-r-2\ell} \bigg(\frac{8d}{k^3}\bigg)^{y_s} d^{k^4+1} k^{10\ell} \frac{(kd-r-2\ell-y_s)! (kd)^{2\ell-2|B^-|}}{\mult(kd-r-y_s)}\nonumber\\
    &\leq \sum_{y_s=0}^{kd-r-2\ell} \bigg(\frac{8d}{k^3}\bigg)^{y_s} d^{3k^4+1} k^{12\ell} \frac{(kd-r-2\ell-y_s)!}{\mult(kd-r-y_s)}.
\end{align}
Using \eqref{eq:smallpieces4} to bound the product of \eqref{eq:step3} and \eqref{eq:step4}, the product of \eqref{eq:step1}, \eqref{eq:step2}, \eqref{eq:step3} and \eqref{eq:step4} is at most
\begin{align*}
|\cY|&\ \leq  d^{4k^4+2} \binom{kd}{2\ell} 2^{\ell+1} (\ell+1)! k^{12\ell} 
\sum_{y_s=0}^{kd-r-2\ell} \bigg(\frac{8d}{k^3}\bigg)^{y_s} \frac{(kd-r-2\ell-y_s)!}{\mult(kd-r-y_s)}.
\end{align*}
Finally, the proof is completed by the following claim applied with $z=r+y_s\ge |\cI(P,P')|\ge \ell-1$, summing over at most $kd$ choices for $r$ and $\ell$, and choosing $k$ large so that, in particular, $4k^4+6 < k^5$ and $16(1+\eta) < k^3$ ensuring that $\sum_{i\ge 0} (8(1+\eta)/k^3)^i = O(1)$.

\begin{claim}
Fix suitably small $\eta>0$ and an integer $k = k(\eta)\in [\eta^{-4},\eta^{-5}]$. Given $z,\ell\in [0,kd]$ with $z+2\ell \leq kd$ and $\ell\leq z+1$, we have that
\begin{equation}\label{eq:mult}
 \binom{kd}{2\ell} 2^{\ell+1} (\ell+1)! k^{12\ell} 
  \frac{(kd-2\ell-z)!}{\mult(kd-z)}
\leq d \left(\frac{1+\eta}{d} \right)^z \frac{(kd)!}{\mult(kd)} = d  \left(\frac{1+\eta}{d} \right)^z \frac{(kd)!}{(k!)^d}.
\end{equation}
\end{claim}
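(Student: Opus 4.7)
My plan is to prove \eqref{eq:mult} by a direct computation via Stirling's approximation. First, write $kd-z=ad+b$ with $a$ the quotient and $b\in[0,d-1]$ the remainder, so that $\mult(kd-z)=(a!)^d(a+1)^b$. This yields the key identity
\[
\frac{(k!)^d}{\mult(kd-z)}=(a+1)^{d-b}(a+2)^d\cdots k^d,
\]
a product of exactly $z=(k-a)d-b$ integer factors, each in $[a+1,k]$. In particular $(k!)^d/\mult(kd-z)\le k^z$, but the finer multiplicative structure will be needed for the estimate.

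Cancel $(kd)!/(k!)^d$ from both sides of \eqref{eq:mult}, bound $\binom{kd}{2\ell}(\ell+1)!\le(kd)^{2\ell}$ (using $(\ell+1)!\le(2\ell)!$), and take logarithms. Stirling's formula then gives $\log\prod_{i=0}^{2\ell+z-1}(kd-i)=\int_{kd-2\ell-z}^{kd}\log y\,dy+O(\log(kd))$ and, after substituting $y=xd$ in $d\int_a^k\log x\,dx$, also $\log((k!)^d/\mult(kd-z))=\int_{ad}^{kd}\log y\,dy-(z+b)\log d-b\log(a+1)+O(d\log k)$. Since $ad-(kd-2\ell-z)=-(2\ell-b)$, subtracting these two integrals collapses them to a residual $\int_{ad}^{kd-2\ell-z}\log y\,dy$ of signed length $b-2\ell$ and absolute value at most $|2\ell-b|\log(kd)$.

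The inequality then reduces to a polynomial estimate in $\ell$, $b$, $\log k$, and $\log d$, namely
\[
2\ell\log(kd)+12\ell\log k+(\ell+1)\log 2+\int_{ad}^{kd-2\ell-z}\!\log y\,dy
\le (1+b)\log d+z\log(1+\eta)+b\log(a+1)+O(\log(kd)+d\log k).
\]
Using $\ell\le z+1$ bounds all $\ell$-dependent contributions on the LHS by $O(z\log k)$, which the slack $z\log(1+\eta)$ on the RHS absorbs thanks to $k\ge\eta^{-4}$ being large (together with $\eta$ small). The residual $O(d\log k)$ Stirling error is absorbed by $(1+b)\log d$ when $b\ge 1$, and by $\log d$ when $b=0$, provided $d$ is sufficiently large compared to $k$ (which holds as $d\to\infty$ with $k$ fixed).

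The main obstacle will be the boundary case $a=0$, which is forced exactly when $z\ge(k-1)d+1$ and in which $\log a!$ is degenerate in Stirling. In this regime the integral identity breaks down, so I would handle it directly: $\mult(kd-z)=1^b=1$, so the LHS is at most $2^{\ell+1}(kd)^{2\ell}k^{12\ell}(d-1)!$ (taking the extremal $z$), while the RHS contains $(kd)!/(k!)^d\sim d^{kd}/(2\pi k)^{d/2}$ times $(1+\eta)^z/d^z$. Since $kd-z\ge 0$, the factor $d^{kd-z-1}\ge 1$ on the RHS multiplied by $(1+\eta)^z\gg(2\pi k)^{d/2}/d^{d-1}$ for large $d$ absorbs $(d-1)!$ and all prefactors coming from the LHS.
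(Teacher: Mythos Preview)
Your absorption argument has a fatal gap. You claim that the $\ell$-dependent terms on your reduced LHS are $O(z\log k)$ (via $\ell\le z+1$) and that $z\log(1+\eta)$ on the RHS absorbs them. But $\log(1+\eta)\approx\eta$, while $k\ge\eta^{-4}$ forces $\log k\ge 4\log(1/\eta)$; thus $\log k\gg\eta$ and the needed inequality $C\,z\log k\le z\log(1+\eta)$ fails by an arbitrarily large factor (the implicit constant $C$ is at least $12$, coming from $k^{12\ell}$). Concretely, take $z=\ell=d$ (so $a=k-1$, $b=0$, and both constraints $\ell\le z+1$ and $z+2\ell\le kd$ hold). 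Your residual integral equals $\int_{(k-1)d}^{(k-3)d}\log y\,dy\approx -2d\log(kd)$, which cancels the $2\ell\log(kd)$ term but nothing else; your reduced LHS is then $12d\log k+O(d)$, while your reduced RHS is $\log d+d\log(1+\eta)\approx d\eta$. Since $12\log k\gg\eta$, your reduced inequality fails---yet the original claim does hold here (the true LHS/RHS ratio is $O((2k^{13}/d)^d)\to 0$).

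The root cause is the preliminary bound $\binom{kd}{2\ell}(\ell+1)!\le(kd)^{2\ell}$, which discards a factor $(2\ell)!/(\ell+1)!\sim\ell^{\ell}$. That factor is exactly what tames $k^{12\ell}$ once $\ell$ exceeds $k^{O(1)}$; the paper keeps it and uses that $\max_{\ell}\tfrac{2^{\ell+1}(\ell+1)!k^{12\ell}}{(2\ell)!}$ is bounded. This is also why the paper does not attempt a single Stirling expansion but splits into three regimes (large $\ell$; moderate $\ell$ with $kd-2\ell-z\ge 2d$; moderate $\ell$ with $kd-2\ell-z<2d$) and bounds $\prod_{i=0}^{z-1}(kd-2\ell-i)^{-1}$ by direct comparison with $\prod_j((k-j)d)^{-1}$. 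Two further issues in your write-up: the $O(d\log k)$ Stirling error is a \emph{positive} addition to the LHS (since $\sum_{j=a+1}^k\log j>\int_a^k\log x\,dx$), so it is not slack on the RHS, and neither $(1+b)\log d$ nor $\log d$ can dominate $d\log k$ for large $d$; and in your $a=0$ case, bounding $(kd-2\ell-z)!\le(d-1)!$ while retaining $(kd)^{2\ell}k^{12\ell}$ gives an LHS of order $(k^{O(1)}d^2)^d$ when $\ell\approx d/2$, which your RHS estimate does not beat once $\log d$ exceeds a constant depending on $k$ and $\eta$.
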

\begin{proof}
First of all, note that $\ell-1\le z\le kd-2\ell$ implies that $\ell\le (kd+1)/3$.
We consider 3 cases based on the values of $\ell$ and $z$.

\vspace{0.5em}
\noindent 
\textbf{Case 1:  $\ell\geq 100k^2d/\log d$.} Then, using that $kd-2\ell\ge kd/4$, we have
\begin{align*}
     \prod_{i=0}^{z-1} \frac{1}{kd-2\ell-i} \leq \frac{(kd)^{kd-2\ell-z}}{(kd-2\ell)!}\leq \frac{(kd)^{kd-2\ell-z}}{((kd-2\ell)/\e)^{kd-2\ell}} =\bigg(\frac{\e kd}{kd-2\ell}\bigg)^{kd-2\ell}\frac{1}{d^z} \leq  \frac{k^{kd}}{d^{z}}.
\end{align*}

Therefore, if $\ell\geq 100k^2d/\log d$, then
\begin{align*}
\binom{kd}{2\ell} 2^{\ell+1} (\ell+1)! k^{12\ell} \frac{(kd-2\ell-z)!}{\mult(kd-z)}
& \le \frac{(kd)!}{(2\ell)!(kd-2\ell)!} 2^{\ell+1} (\ell+1)! k^{12\ell} (kd-2\ell)!\prod_{i=0}^{z-1} \frac{1}{kd-2\ell-i}
\\ &\leq (kd)! \frac{kd (2k)^{12\ell} (\ell/\e)^{\ell}}{(2\ell/\e)^{2\ell}}  \frac{k^{kd}}{d^z}
\\ &\leq  \frac{(kd)! k^{kd}}{\ell^{0.9\ell} d^z} \leq  \frac{(kd)!}{d^z} \e^{-50k^2 d} \leq \frac{1}{d^z} \frac{(kd)!}{(k!)^d}, 
\end{align*}
where the first inequality used that $\mult(kd-z)\ge 1$, the second inequality used Stirling's formula for $\ell!$ and $(2\ell)!$ and the bound $2^{\ell+1} k^{12\ell} (\ell+1)!\le kd (2k)^{12\ell} \ell!$, the third inequality used the relation $\ell^{0.1\ell}\ge kd (2k)^{12\ell} (\e/4)^{\ell}$ and the last inequality comes from Stirling's formula for $k!$.

\vspace{3mm}
\noindent \textbf{Case 2:  $\ell < 100k^2d/\log d$ and $kd-2\ell-z\geq 2d$.} Fix integers $a\in [0,k]$ and $b\in [0,d-1]$ such that $z=ad+b$. Also, set $z^*:= kd -\eta^{-2}d$ .

\vspace{3mm}
\noindent 
If $z\leq z^*$, then $k-a\geq \eta^{-2}$ and we have
\begin{align}\label{eq:1}
  \prod_{i=0}^{z-1} \frac{1}{kd -2\ell-i}& \leq
\prod_{i=2}^{a+1} \left( \frac{1}{(k-i)d}  \right)^{d}
\cdot \left( \frac{1}{(k-a-2)d} \right)^{b} \nonumber
\\& \leq \prod_{i=0}^{a-1} \left( \frac{1}{(k-i)d} \frac{k-i}{k-i-2}  \right)^{d}
\cdot \left( \frac{1}{(k-a)d}\frac{k-a}{k-a-2} \right)^{b} \nonumber
\\&\leq d^{-z} \left(\frac{\eta^{-2}}{\eta^{-2}-2}\right)^z \prod_{i=0}^{a-1} \left(\frac{1}{k-i}\right)^d
\cdot \left( \frac{1}{k-a} \right)^{b} \nonumber
\\& \leq d^{-z} \cdot \left(1+3\eta^2\right)^z \frac{\mult(kd-z)}{\mult(kd)} \leq \bigg(\frac{1+\eta}{d}\bigg)^z \frac{\mult(kd-z)}{\mult(kd)},
\end{align}
where the third inequality used that $\tfrac{k-i}{k-i-2}$ is bounded from above by $\frac{k-a}{k-a-2}\le \frac{\eta^{-2}}{\eta^{-2}-2}$ for every $i\in [0,a]$.
Moreover, if $z > z^*$, by \eqref{eq:1} and the fact that $z-z^*\le\eta^{-2}d\leq 2\eta^2z^* $ (using that $k\ge \eta^{-4}$), we have
\begin{align}\label{eq:2}
  \prod_{i=0}^{z-1} \frac{1}{kd -2\ell-i}& =\prod_{i=0}^{z^*-1} \frac{1}{kd -2\ell-i} \prod_{i=z^*}^{z-1} \frac{1}{kd -2\ell-i}
  \nonumber
  \\&   \leq  d^{-z^*} \cdot \left(1+3\eta^2\right)^{z^*} \frac{\mult(kd-z^*)}{\mult(kd)} \cdot d^{-z+z^*} \nonumber 
  \\& \leq  d^{-z} \cdot \left(1+3\eta^2\right)^{z^*} \cdot k^{z-z^*} \cdot \frac{\mult(kd-z)}{\mult(kd)} \nonumber 
\\& \leq  ((1+3\eta^2)\exp(2\eta^{2}\log k))^{z^*}  \frac{1}{d^z}\frac{\mult(kd-z)}{\mult(kd)}\leq \left(\frac{1+\eta}{d} \right)^z \cdot \frac{\mult(kd-z)}{\mult(kd)},
\nonumber
\end{align}
where the last inequality used that $k\leq \eta^{-5}$ and $z^*\le z$.
Thus, in both cases, we have that, 
\begin{align*}
\binom{kd}{2\ell} 2^{\ell+1} (\ell+1)! 
&k^{12\ell} \frac{(kd-2\ell-z)!}{\mult(kd-z)}\leq (kd)! \cdot\left(\frac{2^{\ell+1} (\ell+1)! k^{12\ell}}{(2\ell)!}\right) \bigg(\prod_{i=0}^{z-1}\frac{1}{kd-2\ell-i}\bigg) \frac{1}{\mult(kd-z)}\\
&\leq (kd)! \max_{\ell\in [0,kd]} \left(\frac{2^{\ell+1} (\ell+1)! k^{12\ell}}{(2\ell)!}\right) \cdot \left(\frac{1+\eta}{d} \right)^z \cdot \frac{1}{\mult(kd)} \leq  d \cdot \left(\frac{1+\eta}{d} \right)^z \cdot \frac{(kd)!}{(k!)^d}. 
\end{align*}

\noindent
\textbf{Case 3:  $\ell < 100k^2d/\log d$ and $ kd-2\ell-z < 2d$.} Then, by using Stirling's formula and the fact that $k$ is a large constant, we have
\begin{align*}
\binom{kd}{2\ell} 2^{\ell+1} (\ell+1)! k^{12\ell}  
&\frac{(kd-2\ell-z)!}{\mult(kd-z)}
\leq  (kd)! \left(\frac{2^{\ell+1} (\ell+1)! k^{12\ell}}{(2\ell)!}\right) \frac{1}{(kd-2\ell)!} (kd-2\ell-z)!\\
&\leq  \max_{\ell\in [0,kd]} \left(\frac{2^{\ell+1} (\ell+1)! k^{12\ell}}{(2\ell)!}\right) \bigg(\frac{(k!)^d (kd-2\ell-z)!}{(kd-2\ell)!}\bigg) \frac{(kd)!}{(k!)^d}\\
&\leq d \bigg(\frac{(k+\sqrt{k})^z (kd-2\ell-z)^{kd-2\ell-z}}{(kd-2\ell)^{kd-2\ell}}\bigg) \frac{(kd)!}{(k!)^d}\\
&\leq d \bigg(\frac{k+\sqrt{k}}{kd-2\ell}\bigg)^z \frac{(kd)!}{(k!)^d} \leq d\left(\frac{1+\eta}{d}\right)^z \frac{(kd)!}{(k!)^d},
\end{align*} 
where the first inequality used that $\mult(kd-z)\ge 1$, the third inequality used that $k!\le ((k+\sqrt{k})/\e)^{k-3}$ for large $k$ and that $(k-3)d\le z$, and the last inequality used that $k\ge \eta^{-4}$.
The bounds established in the three cases complete the proof.
\end{proof}

\noindent
\textbf{Acknowledgements.} Part of this work was done during visits of the second author to TU Wien, and of the third author to the University of Sheffield. The authors would like to thank these institutions for their hospitality.
The second author wishes to thank Alan Sly for fruitful discussions and Michael Krivelevich for his guidance and support. The fourth author wishes to thank Andrei Kupavskii for fruitful discussions.
The authors are grateful to Remco van der Hofstad, Asaf Nachmias and Wojciech Samotij for useful comments and suggestions.

\bibliographystyle{plain}
\bibliography{perc} 
\end{document}